\theoremstyle{plain}
\newtheorem{thm}{Theorem}[section]
\newtheorem{lemma}[thm]{Lemma}
\newtheorem{prop}[thm]{Proposition}
\newtoks\prt
\theoremstyle{definition}
\newtheorem{remark}[thm]{Remark}
\newtheorem{definition}[thm]{Definition}
\def\eqn#1$$#2$${\begin{equation}\label#1#2\end{equation}}
\def\diam{\operatorname{diam}}
\def\haus{\mathcal H}
\def\hauso{\mathcal H^{n-2}}
\def\loc{\operatorname{loc}}
\long\def\clrred#1\endred{{\color{red}#1}}
\long\def\clrmagenta#1\endmagenta{{\color{magenta}#1}}
\long\def\clrpurple#1\endpurple{{\color{purple}#1}}
\long\def\clrblue#1\endblue{{\color{blue}#1}}
\newdimen\vintkern
\def\vint{{-}\kern-\vintkern\int}
\newtoks\by
\newtoks\paper
\newtoks\book
\newtoks\jour
\newtoks\yr
\newtoks\pages
\newtoks\vol
\newtoks\publ
\def\ota{{\hbox\vol{???}}}
\def\cLear{\by=\ota\paper=\ota\book=\ota\jour=\ota\yr=\ota
\pages=\ota\vol=\ota\publ=\ota}
\def\endpaper{\the\by, {\the\paper},
\textit{\the\jour} \textbf{\the\vol} (\the\yr), \the\pages.\cLear}
\def\endbook{\the\by, \textit{\the\book}, \the\publ.\cLear}
\def\endprep{\the\by, \textit{\the\paper}, \the\jour.\cLear}
\def\endyearprep{\the\by, \textit{\the\paper}, \the\jour, (\the\yr).\cLear}
\numberwithin{equation}{section}
\def\Deg{\operatorname{Deg}}
\def\div{\operatorname{div}}
\def\ep{\varepsilon}
\def\epsilon{\varepsilon}
\def\en{\mathbb N}
\def\er{\mathbb R}
\def\F{\mathcal{F}}
\def\ff{\varphi}
\def\opartial{}
\def\phi{\varphi}
\def\rn{\er^n}
\def\topi{\operatorname{im}_T}
\def\ue{\mathbf u}
\def\ep{\varepsilon}
\def\epsilon{\varepsilon}
\def\en{\mathbb N}
\def\er{\mathbb R}
\def\ff{\varphi}
\def\rn{\er^n}
\def\loc{{\rm loc}}
\def\det{\operatorname{det}}
\def\Det{\operatorname{Det}}
\def\deg{\operatorname{deg}}
\def\dx{\,dx}
\def\dy{\,dy}
\def\cof{\operatorname{cof}}
\def\Deg{\operatorname{Deg}}
\def\div{\operatorname{div}}
\def\ep{\varepsilon}
\def\epsilon{\varepsilon}
\def\en{\mathbb N}
\def\er{\mathbb R}
\def\F{\mathcal{F}}
\def\ff{\varphi}
\def\phi{\varphi}
\def\rn{\er^n}
\def\topi{\operatorname{im}_T}
\def\ue{\mathbf u}
\def\INV{\mathrm{(INV)}}
\def\N{\mathrm{(N)}}
\begin{document}
\title[Weak limit of homeomorphisms in $W^{1,n-1}$]{Weak limit of homeomorphisms in $W^{1,n-1}$: invertibility and lower semicontinuity of energy} 

\author{Anna Dole\v{z}alov\'a}
\address{Department of Mathematical Analysis, Charles University,
So\-ko\-lovsk\'a 83, 186~00 Prague 8, Czech Republic}
\email{\tt dolezalova@karlin.mff.cuni.cz; hencl@karlin.mff.cuni.cz}

\author{Stanislav Hencl}

\author{Anastasia Molchanova}
\address{Faculty of Mathematics, University of Vienna,
Oskar-Morgenstern-Platz 1, A-1090 Vienna, Austria}
\email{\tt anastasia.molchanova@univie.ac.at}

\keywords{limits of Sobolev homeomorphisms, invertibility} 

\subjclass[2000]{46E35}

\thanks{The first two authors were supported  
by the grant GA\v{C}R P201/21-01976S. The first author is a Ph.D. student in the University Centre for Mathematical Modelling,
Applied Analysis and Computational Mathematics and was supported by the Charles University, project GA UK No. 480120. 
The third author was supported by the European Unions Horizon 2020 research and innovation programme under the Marie Sk\l adowska-Curie grant agreement No 847693. }

\begin{abstract}
    {Let $\Omega$, $\Omega'\subset\er^n$ be bounded domains and  
        let $f_m\colon\Omega\to\Omega'$ be a sequence of homeomorphisms with positive Jacobians $J_{f_m} >0$ a.e.\ and prescribed Dirichlet boundary data.
        Let all $f_m$ satisfy the Lusin $\N$ condition and $\sup_m \int_{\Omega}(|Df_m|^{n-1}+A(|\cof Df_m|)+\phi(J_f))<\infty$, where $A$ and $\varphi$ are positive convex functions.
        Let $f$ be a weak limit of $f_m$ in $W^{1,n-1}$. 
        Provided certain growth behaviour of $A$ and $\varphi$, we show that $f$ satisfies the $\INV$ condition of Conti and De Lellis, the Lusin $\N$ condition, and polyconvex energies are lower semicontinuous.  }
\end{abstract}

\maketitle

\section{Introduction}

In this paper, we study classes of mappings that might serve as classes of deformations in 
 Continuum Mechanics
models. 
Let $\Omega\subset\rn$ be a domain, i.e., a non-empty connected open set,
and let $f\colon\Omega\to\rn$ be a mapping with $J_f>0$ a.e.
Following the pioneering papers of Ball \cite{Ball} and Ciarlet and Ne\v{c}as \cite{CN} we ask if our mapping is in some sense injective as the physical ``non-interpenetration of the matter'' indicates that a deformation should be one-to-one. We continue our study from \cite{DHM} and suggest studying the class of weak limits of Sobolev homeomorphism. We show that under natural assumptions on energy functional these limits are also invertible a.e.~and that the energy functional is weakly lower semicontinuous which makes it a suitable class for 
 variational approach. 

Concerning invertibility we use the $\INV$ condition which was introduced for $W^{1,p}$-mappings, $p>n-1$, by 
M\"uller and Spector \cite{MS} (see also e.g.\  \cite{BHMC,HMC, HeMo11,HMO, MST,SwaZie2002,SwaZie2004,T}). Informally speaking, the $\INV$ condition means that the ball $B(x,r)$ is mapped inside the image of the sphere $f(S(a,r))$ and the complement $\Omega\setminus \overline{B(x,r)}$ is mapped outside $f(S(a,r))$ (see Preliminaries for the formal definition). 
From \cite{MS} we know that mappings in this class with $J_f>0$ a.e.\ are one-to-one a.e.\ and that this class is weakly closed. 
Moreover, any mapping in this class has 
many desirable properties, it maps disjoint balls into essentially disjoint balls, $\deg(f,B,\cdot)\in\{0,1\}$ for a.e.\ ball $B$, under an additional assumption  
its distributional Jacobian equals to the absolutely continuous part of $J_f$ plus a countable sum of positive multiples of Dirac measures (these corresponds to created cavities) and so on.  

In all results in the previous paragraph the authors assume that $f\in W^{1,p}(\Omega,\rn)$ for some $p>n-1$
(see also \cite{SciStr2022} for the Orlicz--Sobolev maps with  integrability just above $n-1$). 
However, in some real models for $n=3$ one often works with integrands containing the classical Dirichlet term $|Df|^2$ 
and thus this assumption is too strong. 
Therefore, for $n=3$, Conti and De Lellis \cite{CDL} introduced the concept of $\INV$ condition also for $W^{1,2}\cap L^{\infty}$ (see also \cite{BHMCR} and \cite{BHMCR2} for some recent work) and studied Neohookean functionals of the type
\eqn{funct1}
$$
    \int_{\Omega}\left(|Df(x)|^2+\varphi(J_f(x))\right)\dx,
$$
where $\varphi$ is convex, 
$\lim_{t\to 0+}\varphi(t)=\infty$ and $\lim_{t\to\infty}\frac{\varphi(t)}{t}=\infty$. 
They proved that mappings in the $\INV$ class that satisfy $J_f>0$ a.e.\ have nice properties like mappings in \cite{MS},
but this class is not weakly closed and hence cannot be used in variational models easily. To fix this problem we add an additional term to the energy functional and we work only with the class of weak limits of homeomorphisms. 

Let us note that homeomorphisms clearly satisfy the $\INV$ condition and so do their weak limits in $W^{1,p}$, $p>n-1$ (see \cite[Lemma 3.3]{MS}). 
Unfortunately, this is not true anymore in the limiting case of limit of $W^{1,n-1}$ homeomorphisms as shown by Conti and De Lellis \cite{CDL} (see also Bouchala, Hencl and Molchanova \cite{BHM}). Let us also note that the class of weak limits of Sobolev homeomorphisms was recently characterized in the planar case by Iwaniec and Onninen \cite{IO,IO2} and De Philippis and Pratelli \cite{DPP}. Our paper contributes to the study of this class in higher dimensions $n\geq 3$. 

In our previous result with J. Mal\'y \cite{DHM} we have shown that weak limit of homeomorphisms in $W^{1,n-1}$ satisfy the $\INV$ condition once the functional \eqref{funct1} is uniformly bounded and $\varphi(t)\geq \frac{1}{t^a}$ for some proper $a$. More precisely we have shown more general statement where we assumed the uniform integrability of some distortion function (see Theorem \ref{main2} below). The main aim of this paper is to show that under reasonable conditions we have even lower semicontinuity of the corresponding energy functional and that we can apply standard Calculus of Variarions techniques in this context. 
We need to assume the following natural growth assumptions 
\eqn{varphi}
$$
    \ff \text{ is a positive convex function on }(0,\infty)\text{ with }
    \lim_{t\to 0^+}\ff(t)=\infty,\ 
    $$
\eqn{varphi2}
$$
    \lim_{t\to \infty}\frac{\ff(t)}{t}=\infty
$$
and that there is a constant $A>0$ with
\eqn{varphi3}
$$
A^{-1}\ff(t)\le \ff(2t)\leq A\ff(t),\qquad t\in (0,\infty).
$$
We further assume that our homeomorphisms have the same Dirichlet boundary data and that they satisfy the Lusin $\N$ condition, i.e.\ that for every 
$E\subset\Omega$ with $|E|=0$ we have $|f(E)|=0$. The validity of the $\INV$ condition in the next theorem is from \cite{DHM} but the `Moreover' part is entirely new. 

\begin{thm}\label{main2} 
Let $n\geq 3$, $\Omega$, $\Omega'\subset\rn$ be Lipschitz domains
and let $\ff$  
satisfy \eqref{varphi} and \eqref{varphi3}.  
Let $f_m\in W^{1,n-1}(\Omega,\Omega')$, 
$m =0,1,2\dots$, 
be a sequence of 
homeomorphisms of 
$\overline\Omega$ onto $\overline{\Omega'}$ 
with $J_{f_m}> 0$ a.e.\ 
such that 
$$
\sup_m \int_{\Omega}\left(|D f_m(x)|^{n-1}+\ff\left(J_{f_m}(x)\right)+\left(\frac{|Df_m(x)|^n}{J_{f_m(x)}}\right)^{\frac{1}{n-1}}\right)\dx<\infty. 
$$
Assume further that $f_m=f_0$ on $\partial\Omega$ for all $m\in\en$. 
Let $f$ be a weak limit of $f_m$ in $W^{1,n-1}(\Omega,\rn)$, then $f$ satisfies the $\INV$ condition. 
    
    Moreover, under the additional assumptions \eqref{varphi2} and that all $f_m$ satisfy the Lusin $\N$ condition we obtain that our $f$ satisfies the Lusin $\N$ condition and we have lower semicontinuity of energy
    \begin{equation}
        \mathcal{G}(f):=\int_{\Omega}\left(|D f(x)|^{n-1}+\ff(J_{f}(x))+\left(\frac{|Df(x)|^n}{J_{f}(x)}\right)^{\frac{1}{n-1}}\right)\leq \liminf_{m\to\infty}\mathcal{G}(f_m). 
    \end{equation}
		Further 
    $$
        \text{ for a.e.\ }x\in\Omega\text{ we have }h(f(x))=x\text{ and for a.e.\ }y\in\Omega'\text{ we have }f(h(y))=y,
    $$ 
    where $h$ is a weak-$*$ limit of (some subsequence of) $f_m^{-1}$ in $BV(\Omega',\rn)$. 
\end{thm}

Furthermore, we have found out another set of conditions under which we can conclude $\INV$ condition and similar results about lower semicontinuity 
and we study the energy functional 
$$
    \F(f)=\int_{\Omega}\left(|D f(x)|^{n-1}+A(|\cof Df(x)|)+\ff(J_f(x))\right) \dx 
$$
where 
\begin{equation}\label{A}
    A(|\cdot|) \text{ is a positive convex function  
    }\text{ with }\lim_{t\to\infty}\frac{A(t)}{t}=\infty.
\end{equation}

\begin{thm}\label{main} 
    Let $n\geq 3$ and $\Omega$, $\Omega'\subset\rn$ be bounded domains.
    Let functions $\ff$ and $A$ satisfy \eqref{varphi} and \eqref{A}. 
    Let $f_m\in W^{1,n-1}(\Omega,\rn)$, 
    $m =0,1,2\dots$, 
    be a sequence of homeomorphisms of 
    $\overline\Omega$ onto $\overline{\Omega'}$ 
    with $J_{f_m}> 0$ a.e., such that $f_m$ satisfies the Lusin $\N$ condition and  
    \begin{equation}\label{key}
        \sup_m \F(f_m)<\infty. 
    \end{equation}
    Assume further that $f_m=f_0$ on $\partial\Omega$ for all $m\in\en$. 
    Let $f$ be a weak limit of $f_m$ in $W^{1,n-1}(\Omega,\rn)$, 
    then $f$ satisfies the $\INV$ condition. 
    
    Moreover, under the additional assumption \eqref{varphi2} our $f$ satisfies the Lusin $\N$ condition and we have lower semicontinuity of energy
    \begin{equation}\label{optimistic}
        \F(f)\leq \liminf_{m\to\infty}\F(f_m). 
    \end{equation}
		Assuming further that $|\partial \Omega'|=0$ we have 
		\eqn{haha}
    $$
        \text{ for a.e.\ }x\in\Omega\text{ we have }h(f(x))=x\text{ and for a.e.\ }y\in\Omega'\text{ we have }f(h(y))=y,
    $$ 
    where $h$ is a weak limit of (some subsequence of) $f_k^{-1}$ in $W^{1,1}(\Omega',\rn)$. 
\end{thm}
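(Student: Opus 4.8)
The plan is to identify $h$ with the a.e.-defined inverse of $f$, by passing to the limit in the classical change-of-variables identity for the homeomorphisms $f_m$ and then re-reading the limit through the area formula for $f$. We begin by constructing $h$ together with a convenient subsequence. Each $f_m$ is a homeomorphism of finite distortion satisfying $\N$, so the change of variables gives $\int_{\Omega'}|Df_m^{-1}(y)|\dy=\int_\Omega|\cof Df_m(x)|\dx$; by \eqref{A}, \eqref{key} and the de la Vall\'ee--Poussin criterion the family $\{\cof Df_m\}$ is equi-integrable on $\Omega$, hence $\{Df_m^{-1}\}$ is bounded and equi-integrable in $L^1(\Omega')$, and since $\|f_m^{-1}\|_{L^\infty(\Omega')}\le\diam\overline\Omega$ we may pass to a subsequence with $f_m^{-1}\rightharpoonup h$ in $W^{1,1}(\Omega',\rn)$. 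Passing to a further subsequence we also arrange $f_m\to f$ a.e.\ in $\Omega$ (Rellich applied to $f_m-f_0\in W^{1,n-1}_0(\Omega)$, which is bounded in $W^{1,n-1}$ by \eqref{key}) and $J_{f_m}\rightharpoonup J_f$ weakly in $L^1(\Omega)$: by \eqref{varphi2} and \eqref{key} the sequence $\{J_{f_m}\}$ is equi-integrable, so a subsequence converges weakly in $L^1$, and its limit equals $J_f$ by the weak continuity of the distributional Jacobian in this borderline class, equi-integrability excluding any concentrated, cavitation-type contribution in the limit (this is the same mechanism already needed for \eqref{optimistic}). Finally $\int_\Omega\ff(J_f)<\infty$ and \eqref{varphi} force $J_f>0$ a.e., so the $\INV$ map $f$ is injective a.e.\ by \cite{MS,CDL}.

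Next, fix $\psi\in C_c(\Omega')$ and $j\in\{1,\dots,n\}$. Applying the change of variables for the homeomorphism $f_m$ to the bounded function $y\mapsto\psi(y)\,(f_m^{-1}(y))_j$ and using $f_m^{-1}(f_m(x))=x$ yields
\[
    \int_{\Omega'}\psi(y)\,(f_m^{-1}(y))_j\dy=\int_{\Omega}\psi(f_m(x))\,x_j\,J_{f_m}(x)\dx .
\]
On the left, $f_m^{-1}\rightharpoonup h$ in $L^1(\Omega')$ and $\psi\in L^\infty$, so the left-hand side tends to $\int_{\Omega'}\psi(y)\,h_j(y)\dy$. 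On the right, $x\mapsto\psi(f_m(x))\,x_j$ is bounded by $\|\psi\|_\infty\diam\overline\Omega$ and converges a.e.\ to $x\mapsto\psi(f(x))\,x_j$, while $J_{f_m}\rightharpoonup J_f$ in $L^1(\Omega)$; by the standard product lemma (an a.e.-convergent bounded sequence against a weakly $L^1$-convergent one, via Egorov and equi-integrability) the right-hand side tends to $\int_{\Omega}\psi(f(x))\,x_j\,J_f(x)\dx$. Hence
\[
    \int_{\Omega'}\psi(y)\,h_j(y)\dy=\int_{\Omega}\psi(f(x))\,x_j\,J_f(x)\dx
    \qquad\text{for all }\psi\in C_c(\Omega'),\ j=1,\dots,n .
\]

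To conclude we must read the right-hand side as an integral over $\Omega'$. This is where the hypothesis $|\partial\Omega'|=0$ enters: combined with the boundary condition $f_m=f_0$ on $\partial\Omega$ and a degree argument for $\INV$ maps it gives $|\Omega'\setminus f(\Omega)|=0$, so together with the a.e.\ injectivity from the first step there is a measurable map $f^{-1}\colon\Omega'\to\Omega$, defined a.e., with $f^{-1}(f(x))=x$ for a.e.\ $x\in\Omega$ and $f(f^{-1}(y))=y$ for a.e.\ $y\in\Omega'$ (here one also uses $\N$ to see $f(\Omega_0)$ stays of full measure and, together with $J_f>0$ a.e., that $f$ enjoys the $(\N^{-1})$ property, which makes the first identity meaningful). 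Since $f$ satisfies $\N$ and $J_f>0$ a.e., the area formula applied with the integrand $x\mapsto\psi(f(x))\,x_j=\psi(f(x))\,(f^{-1}(f(x)))_j$ gives $\int_\Omega\psi(f(x))\,x_j\,J_f(x)\dx=\int_{\Omega'}\psi(y)\,(f^{-1}(y))_j\dy$. Comparing with the displayed identity, $\int_{\Omega'}\psi\,(h_j-(f^{-1})_j)=0$ for every $\psi\in C_c(\Omega')$, so $h=f^{-1}$ a.e.\ on $\Omega'$; therefore $f(h(y))=y$ for a.e.\ $y\in\Omega'$ and $h(f(x))=x$ for a.e.\ $x\in\Omega$. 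Since this pins down $h$ uniquely, no ambiguity arises from the successive passages to subsequences.

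I expect the main obstacle to be the weak $L^1$-convergence $J_{f_m}\rightharpoonup J_f$ in the borderline regularity $Df_m\in L^{n-1}$, $\cof Df_m,\,J_{f_m}\in L^1$ with only equi-integrability in hand: one has to establish weak continuity of the relevant minors in this class and use equi-integrability (through \eqref{varphi2}) to rule out concentrated, cavitation-type parts in the limiting distributional Jacobian --- precisely the analytic core also underlying \eqref{optimistic}. A secondary but genuine point is the surjectivity statement $|\Omega'\setminus f(\Omega)|=0$, for which one uses $|\partial\Omega'|=0$ together with a careful degree-theoretic argument for $\INV$ maps carrying the prescribed boundary data.
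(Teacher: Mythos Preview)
Your proposal treats only the final, a.e.-inverse assertion of the theorem, taking the $\INV$ condition, the Lusin $\N$ condition, and the weak $L^1$ convergence $J_{f_m}\rightharpoonup J_f$ as already established from the earlier parts. With those inputs, your route is correct and genuinely different from the paper's. The paper (Lemma~\ref{inverse}) proves $h(f(x))=x$ by a local argument: for a.e.\ $x$, $f(x)$ is a Lebesgue point of $h$ and a density point of $\topi(f,B)$ for suitable small balls $B$, and then one combines $L^1_{\loc}$ convergence $f_m^{-1}\to h$ with the image-convergence $|f_m(B)\triangle\topi(f,B)|\to 0$ from Lemma~\ref{lem:fk-ft} to trap $h(f(x))$ near $x$. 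Only afterwards is the Jacobian convergence invoked, and only to prove the surjectivity $|\Omega'\setminus f(\Omega)|=0$ needed for $f(h(y))=y$. Your global change-of-variables identity is cleaner and more direct, but it makes the weak $L^1$ Jacobian convergence carry the entire argument, whereas the paper's proof of the first identity $h(f(x))=x$ relies only on the more geometric input \eqref{eq:fk-ft}.

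Two minor points. First, your passage from equi-integrability of $\cof Df_m$ on $\Omega$ to equi-integrability of $Df_m^{-1}$ on $\Omega'$ is correct but not automatic: via the change of variables one must also know that $|f_m^{-1}(E)|$ is uniformly small when $|E|$ is small, which uses the $\ff$-control \eqref{varphi} through Lemma~\ref{l:reverse}; the paper packages this into Theorem~\ref{equiintegrable}. Second, your justification of $J_{f_m}\rightharpoonup J_f$ by ``weak continuity of the distributional Jacobian in this borderline class'' is the right instinct but not a proof at this regularity; in the paper this is precisely Lemma~\ref{lem:jacobian_convergence}, and it rests on the $\INV$ condition, \eqref{eq:fk-ft}, and the structure of $\Det Df$ from Lemma~\ref{lem:distributional_jacobian}, not on any general weak continuity statement for minors in $W^{1,n-1}$.
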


After we have finished this result we were informed that most of the results in Theorem \ref{main} have been proven already by D.~Henao and C.~Mora-Corral in \cite{HMC} using different technique 
(see \cite[Theorem 5.5, Proposition 8.4 and Theorem 8.5]{HMC}). They have the result even for $f_m$ satisfying $\INV$ condition once some functional $\mathcal{E}(f_m)$ is uniformly bounded and this is true for homeomorphisms $f_m$ that satisfy Lusin $\N$ condition. We believe that our proof is still of interest as it is more geometrical and brings new ideas and techniques to this area. 
Moreover, we do not need to assume \eqref{varphi2} for the validity of $\INV$ for $f$ and the property \eqref{haha} is new.For special cases of $A$ (a power high enough), the lower semicontinuity follows from \cite{STY}.

Let us comment on our assumptions in Theorem \ref{main}.
Each homeomorphism $f_m\colon \Omega\to\Omega'$, $f_m\in W^{1,n-1}(\Omega,\rn)$ with $J_{f_m}>0$ a.e.\ satisfies $f_m^{-1}\in W^{1,1}(\Omega',\rn)$ (see \cite{CHM}). 
Moreover (see e.g.~\cite{CHM} or \cite{DS}), we have
$$
    \int_{\Omega'}|Df_m^{-1}(y)| \dy\leq \int_{\Omega}|Df_m(x)|^{n-1}\dx
$$
and hence \eqref{key} implies that there is a subsequence of $f_m^{-1}$ which converges weak-$*$ to some $h\in BV(\Omega',\rn)$. 
Using
\eqn{eee}
$$
    \sup_m \int_{\Omega}A(|\cof Df_m(x)|)\dx<\infty 
$$
we get that $Df_m^{-1}$ are equiintegrable (see Theorem \ref{equiintegrable} below) and hence (up to a subsequence) $f_m^{-1}$ converge to $h\in W^{1,1}(\Omega',\rn)$ weakly in $W^{1,1}(\Omega',\rn)$. This assumption \eqref{eee} is also crucial in our proof of the $\INV$ condition as it implies that image $f_m(A)$ of small set $A\subset \partial B(c,r)$ is ``uniformly'' small in $m$ and therefore cannot enclose some big set that would like to escape from $f(\partial B(c,r))$ violating the $\INV$ condition. 

The condition
\[
    \sup_m \int_{\Omega}\ff(J_{f_m}(x))\dx<\infty\text{ with }\lim_{t\to 0+}\varphi(t)=\infty\ \Bigl(\text{resp.} \lim_{t\to \infty}\frac{\ff(t)}{t}=\infty\Bigr)
\]
implies that small sets have uniformly small preimages (resp.\ small sets have uniformly small images) and these conditions are quite standard in the theory. 
Moreover, we need to assume that $f_m$ maps null sets to null sets (by the Lusin $\N$ condition), which is again natural as our deformation cannot create a new material from ``nothing''. 
Let us note that this is crucial for the lower semicontinuity of our functional. 
In Lemma~\ref{lem:example} below we construct a series of homeomorphisms that do not satisfy the Lusin $\N$ condition as they map some null set to a set of positive measure $a$, though they satisfy all other assumptions, converge weakly to $f(x)=x$ and
\[
    \int_{(0,1)^n}J_{f_m}(x)\dx=1-a<1=\int_{(0,1)^n} J_f(x)\dx
\]
and hence lower semicontinuity fails at least for some polyconvex functionals. Similarly,
if we omit the condition~\eqref{varphi2}, we can construct a counterexample to semicontinuity of some functional if all $f_m$ satisfy $\N$ but $f$ does not. 
The lower semicontinuity of functionals below the natural $W^{1,n}$ energy has attracted a lot of attention in the past and we refer the reader e.g.~to Ball and Murat 
\cite{BM}, Mal\'y \cite{M}, Dal Maso and Sbordone \cite{DMS} and  Celada and Dal Maso \cite{CM} for further information. 

Let us note one disadvantage of our approach. In the previous models \cite{CDL}, \cite{MS} it was possible to model also the cavitation, i.e., the creation of small holes. Unfortunately, this is not possible for us as the condition \eqref{varphi2}
together with \eqref{key} tells us that $f_m$ cannot map small sets onto big sets. However, this is exactly what is needed to be done by our approximating homeomorphisms around the point where the cavity is created by $f$. On the other hand, the condition
 \eqref{varphi2}
is crucial for us in order to prove the Lusin $\N$ condition for $f$ and this condition is the key for the proof and the validity of the lower semicontinuity of our functional.



Let us briefly comment on the structure of this paper. We recall the definition of the degree and of the $\INV$ condition in the Preliminaries and we prove the equiintegrability of $Df_m^{-1}$ there. 
Our proof of $\INV$ condition for $f$ uses some techniques and results that we have developed in our previous paper \cite{DHM} on this topic. We recall some of those in the Preliminaries and then we give a detailed proof of the $\INV$ condition using some of those techniques in Section \ref{three}. 
In Sections \ref{sec:Lusin}--\ref{sec:injectivity} we use the $\INV$ condition to prove that $f$ satisfies the $\N$ condition and that $h$ ($W^{1,1}$ weak limit of $f_m^{-1}$) is the ``a.e.~inverse'' of~$f$. 
Then we use the $\N$ condition to prove the lower semicontinuity of our polyconvex functional in Section \ref{sec:lsc} and we show some counterexamples to lower semicontinuity without our assumption \eqref{varphi2} in Section \ref{counter}. Finally, we return to the result of \cite{DHM} where we have shown $\INV$ under different assumptions and we show that the lower semicontinuity of energy is valid also there if we additionally assume that $f_m$ satisfy $\N$ and that we have
 \eqref{varphi2}.
In the last Section \ref{calcvar} we give a quick application of our result in Calculus of Variations.


\section{Preliminaries} 

\subsection{Change of variables estimates}\label{area}\

Let $\Omega\subset\rn$ be open, $A\subset\Omega$ be measurable and let $g\in W_{\loc}^{1,1}(\Omega;\rn)$
be one-to-one. 
Without any additional assumption we have (see e.g.\ \cite[Theorem A.35]{HK} for $\eta=\chi_{g(A)}$)
\begin{equation}\label{area1}
    \int_{A}|J_g(x)| \dx\le |g(A)|.
\end{equation}
Moreover, for general $g$ satisfying the Lusin $\N$ condition we have (see e.g.~\cite[Theorem A.35]{HK} for $\eta=\chi_{g(A)}$)
\begin{equation}\label{area2}
    \int_{A}|J_g(x)| \dx= \int_{\rn}N(g,A,y) \dy,
\end{equation}
where $N(g,y,A)$ is defined as a number of preimages of $y$ under $g$ in $A$. 

Analogous change of variables formula holds also for mappings 
$h\colon\Omega\to\rn$, $\Omega\subset\er^{n-1}$. For Lipschitz $h$ we have (see e.g.~\cite[Theorem 3.2.3]{Fe})
\eqn{area3}
$$
\int_A J_{n-1} h(x)\; dx=\int_{\rn}N(h,A,y)\; d\haus^{n-1}(y),
$$
where $A\subset \Omega$ is measurable, $N(h,A,y)$ denotes the number of preimages $h^{-1}(y)$ in a set~$A$ and $J_{n-1}h$ is the $(n-1)$-dimensional Jacobian of $h$, i.e.\ it consists of sizes of $(n-1)\times(n-1)$ subdeterminants. 
We know that each $h\in W^{1,1}(\Omega,\rn)$ is approximately differentiable a.e.\ (see e.g.~\cite[Theorem 3.83]{AFP}) and for each 
approximately differentiable function we can exhaust $\Omega$ up to a set of measure
zero by sets so that the restriction of $h$ is Lipschitz continuous on those sets (see \cite[Theorem 3.1.8 and Theorem 3.1.4]{Fe}). It follows that \eqref{area3} holds for Sobolev mapping $h\in W^{1,1}(\Omega,\rn)$ if we know that for every $E\subset\Omega$ with $\haus^{n-1}(E)=0$ we have $\haus^{n-1}(h(E))=0$. In general the area formula \eqref{area3} holds for Sobolev $h$ only up to a set of $(n-1)$-dimensional measure zero $E\subset\Omega$ (see also \cite[Chapter 3, Section 1.5, Theorem 1 and Corollary 2]{GiaModSou}).

\begin{lemma}\label{l:reverse}
    Given $C_1<\infty$ and $\varphi$ satisfying \eqref{varphi}, 
    there exist monotone functions $\Phi$,~$\Psi\colon (0,\infty)\to(0,\infty)$ with 
    $$
        \lim_{s\to 0^+}\Phi(s)=0\text{ and }\lim_{s\to 0^+}\Psi(s)=0
    $$
    such that: 
    Let $g\in W^{1,1}(\Omega,\rn)$ be a one-to-one mapping with $\int_{\Omega} \phi(J_g)\le C_1$. 
    Then for each measurable set $A\subset\Omega$ we have
    \begin{equation}\label{reverse}
        \Phi(|A|)\le |g(A)|.
    \end{equation}
    If we moreover assume that the Lusin $\N$ condition holds for $g$ and that \eqref{varphi2} holds, then also
    \begin{equation}\label{reverse2}
        |g(A)|\le \Psi(|A|).
    \end{equation} 
\end{lemma}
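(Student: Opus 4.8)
The plan is to prove the two inequalities separately, and in both cases the engine is a Young-type (de~la~Vall\'ee~Poussin) argument: a superlinear/superhyperbolic growth condition forces a uniform modulus of (absolute) continuity for the relevant set-function, and then the change-of-variables formulas \eqref{area1}, \eqref{area2} convert between $|A|$ and $|g(A)|$.

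\textbf{Construction of $\Phi$ and the lower bound \eqref{reverse}.}
First I would exploit \eqref{varphi}: since $\phi$ is convex, positive, and $\phi(t)\to\infty$ as $t\to0^+$, the function $t\mapsto \phi(1/t)$ grows superlinearly, i.e.\ $\phi(1/t)/t\to\infty$ as $t\to\infty$. By the de~la~Vall\'ee~Poussin criterion this means: for every $\varepsilon>0$ there is $\lambda=\lambda(\varepsilon,C_1)$ such that for any one-to-one $g$ with $\int_\Omega \phi(J_g)\le C_1$ and any measurable $E\subset\Omega$,
\[
    \int_{E\cap\{J_g<\lambda\}} J_g \, dx \le \varepsilon.
\]
(Here one uses $\{J_g<\lambda\}\subset\{\phi(1/J_g)>\phi(1/\lambda)\}$ once $\phi(1/\cdot)$ is eventually monotone, together with Chebyshev.) Now fix a measurable $A\subset\Omega$ and split $A=(A\cap\{J_g<\lambda\})\cup(A\cap\{J_g\ge\lambda\})$. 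On the second piece $\int_{A\cap\{J_g\ge\lambda\}}J_g\,dx\ge \lambda|A\cap\{J_g\ge\lambda\}|$, while by \eqref{area1}, $\int_A J_g\,dx\le|g(A)|$. Combining,
\[
    |g(A)| \ge \int_A J_g\,dx \ge \lambda\big(|A|-|\{J_g<\lambda\}\cap A|\big) - (\text{nothing extra needed});
\]
more carefully: $|g(A)|\ge \int_{A\cap\{J_g\ge\lambda\}} J_g\,dx \ge \lambda\,|A\cap\{J_g\ge\lambda\}|$, and $|A\cap\{J_g\ge\lambda\}|\ge |A|-|\{J_g<\lambda\}|$. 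It remains to control $|\{J_g<\lambda\}|$ independently of $A$: but $|\{J_g<\lambda\}|$ need not be small. The right move instead is to choose, given a target $|A|=:s$, the value $\lambda$ so that $|g(A)|$ cannot be too small: set $\varepsilon$ as a free parameter, and note $\int_A J_g \ge \lambda|A| - \int_{A\cap\{J_g<\lambda\}}J_g \ge \lambda s - \varepsilon$. Choosing $\varepsilon = \lambda s/2$ and then $\lambda=\lambda(\varepsilon,C_1)$ as above (this is an implicit relation, solvable because $\lambda(\varepsilon,C_1)\to\infty$ as $\varepsilon\to0$) yields $|g(A)|\ge \lambda s/2 =: \Phi(s)$, with $\Phi(s)\to0$ as $s\to0^+$ because $\lambda s \le 2\varepsilon$ forces... no — one must check $\Phi(s)=\lambda(\lambda s/2, C_1)\cdot s/2\to 0$; since for fixed $C_1$ the map $\varepsilon\mapsto\lambda$ is finite for each $\varepsilon>0$, and $s\le 2\varepsilon/\lambda$, letting $s\to0$ we may keep $\lambda$ bounded along a subsequence, whence $\Phi(s)\to0$, and monotonicity of $\Phi$ is arranged by replacing $\Phi$ with $\sup_{s'\le s}$ of it. I would streamline this in the final write-up, but the mechanism is exactly this balancing of the two parts of $\int_A J_g$.

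\textbf{Construction of $\Psi$ and the upper bound \eqref{reverse2}.}
Here I add the Lusin $\N$ condition, so \eqref{area2} gives $|g(A)|=\int_A J_g\,dx$ (for one-to-one $g$, $N(g,A,\cdot)=\chi_{g(A)}$). Now \eqref{varphi2}, $\phi(t)/t\to\infty$, is precisely de~la~Vall\'ee~Poussin applied to $J_g$ itself: $\int_\Omega\phi(J_g)\le C_1$ with superlinear $\phi$ implies $\{J_g:\ \int_\Omega\phi(J_g)\le C_1\}$ is equi-integrable, i.e.\ there is a modulus $\omega_{C_1}$ with $\int_E J_g\,dx\le \omega_{C_1}(|E|)$ and $\omega_{C_1}(0^+)=0$, uniformly over all such $g$ (indeed, split $E$ into $\{J_g\le M\}$ and $\{J_g>M\}$: the first contributes $\le M|E|$, the second $\le \sup_{t>M}\frac{t}{\phi(t)}\cdot C_1$, then optimise in $M$). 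Setting $\Psi:=\omega_{C_1}$ and applying it with $E=A$ gives $|g(A)|=\int_A J_g\,dx\le\Psi(|A|)$. Monotonicity and $\Psi(s)\to0$ as $s\to0^+$ are immediate from the construction.

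\textbf{Expected main obstacle.}
The routine part is the de~la~Vall\'ee~Poussin / equi-integrability argument for \eqref{reverse2}; that is standard. The genuinely delicate point is \eqref{reverse}: turning the superlinear growth of $\phi(1/t)$ into a \emph{quantitative, $g$-uniform} lower bound $\Phi(|A|)$ for $|g(A)|$. The subtlety is that $\phi$ only blows up at $0$, so the "bad" set $\{J_g\ \text{small}\}$ can have large measure, and one cannot bound its measure directly; the fix is to bound its \emph{contribution to the integral} $\int_A J_g$ rather than its measure, and then balance the parameter $\lambda$ against the prescribed $|A|=s$. Getting the implicit relation between $\lambda$ and $s$ to produce a genuine modulus $\Phi$ with $\Phi(0^+)=0$ — and verifying it is independent of $g$ — is where the care is needed; everything else follows from \eqref{area1}, \eqref{area2}, and convexity of $\phi$.
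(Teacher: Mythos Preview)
Your argument for \eqref{reverse2} is correct and coincides with the paper's: de~la~Vall\'ee~Poussin applied to $J_g$ via \eqref{varphi2} gives equi-integrability, and the Lusin $\N$ condition turns \eqref{area1} into the equality $|g(A)|=\int_A J_g\,dx$.

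Your argument for \eqref{reverse}, however, has a genuine gap. The claimed superlinearity ``$\phi(1/t)/t\to\infty$ as $t\to\infty$'' does \emph{not} follow from \eqref{varphi}: take $\phi(t)=1/t$ (convex, positive, $\phi(0^+)=\infty$), for which $\phi(1/t)/t\equiv 1$. Consequently your de~la~Vall\'ee~Poussin bound on $\int_{E\cap\{J_g<\lambda\}}J_g$ is unsupported, and the parenthetical justification (which involves $\phi(1/J_g)$, a quantity you have no integral control over) does not work. Your later worry that ``the bad set $\{J_g\ \text{small}\}$ can have large measure'' is also misplaced --- in fact its measure \emph{is} small, and that is exactly what drives the proof.

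The fix is simpler than what you attempted. Convexity of $\phi$ together with $\phi(0^+)=\infty$ forces $\phi$ to be decreasing on some interval $(0,t^*)$, so for $\lambda<t^*$ one has $\phi(J_g)\ge\phi(\lambda)$ on $\{0<J_g<\lambda\}$; Chebyshev then gives
\[
  |\{J_g<\lambda\}|\le \frac{1}{\phi(\lambda)}\int_\Omega \phi(J_g)\,dx\le \frac{C_1}{\phi(\lambda)}\xrightarrow[\lambda\to0^+]{}0.
\]
(The bound $\int_\Omega\phi(J_g)<\infty$ also forces $J_g>0$ a.e.) Now split
\[
  |A|=|A\cap\{J_g<\lambda\}|+|A\cap\{J_g\ge\lambda\}|\le \frac{C_1}{\phi(\lambda)}+\frac{1}{\lambda}\int_A J_g\,dx\le \frac{C_1}{\phi(\lambda)}+\frac{|g(A)|}{\lambda},
\]
using \eqref{area1} in the last step. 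Optimising over $\lambda$ defines a modulus $\Gamma(u)=\inf_{\lambda>0}\bigl(C_1/\phi(\lambda)+u/\lambda\bigr)$ with $\Gamma(0^+)=0$, and $|A|\le\Gamma(|g(A)|)$ gives \eqref{reverse} with $\Phi$ an inverse of $\Gamma$. This is the content behind the reference the paper cites for this inequality.
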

\begin{proof}
    The proof of \eqref{reverse} can be found in the proof of \cite[Lemma 2.9]{DHM} (we omit here the assumption on $\|g\|_{L^{\infty}}$ as it is not necessary).
    The proof of \eqref{reverse2} follows from De la Vallee Pousin theorem \cite[Theorem B.103]{L} applied to $|J_g|$ and the fact that the Lusin $\N$ condition implies an equality in \eqref{area1}. Note that we can assume that both $\Phi$ and $\Psi$ are monotone.
\end{proof}

The following lemma was shown in \cite[Lemma 2.8]{DHM}. 
\begin{lemma}\label{Ninv} 
Let $\Omega\subset\rn$ be an open set of finite measure
and $f\in W_{\loc}^{1,1}(\Omega;\rn)$ satisfy $J_f\neq 0$ a.e. 
Then
for every $\ep>0$ there is $\delta>0$ such that for every measurable set $F\subset\rn$ 
we have
$$
|F|<\delta\implies |f^{-1}(F)|<\ep.
$$
\end{lemma}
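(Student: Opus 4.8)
The plan is to localise $f$ to countably many Lipschitz pieces, apply the area formula on each, and then turn ``$\int_{f^{-1}(F)}|J_f|$ is small'' into ``$|f^{-1}(F)|$ is small'' by exploiting $J_f\neq 0$ a.e.

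Fix $\ep>0$. Since $f\in W^{1,1}_{\loc}(\Omega;\rn)$ is approximately differentiable a.e., the exhaustion of an approximately differentiable map by Lipschitz pieces (recalled above, before Lemma~\ref{l:reverse}) gives a decomposition $\Omega=N\cup\bigcup_{k\ge1}\Omega_k$ into a null set $N$ and pairwise disjoint measurable sets $\Omega_k$ on each of which $f$ coincides with the restriction of a Lipschitz map $g_k\colon\rn\to\rn$; moreover the approximate derivative of $f$ equals $Dg_k$ a.e.\ on $\Omega_k$, so $J_{g_k}\neq 0$ a.e.\ on $\Omega_k$. As $|\Omega|<\infty$, choose $K$ with $\sum_{k>K}|\Omega_k|<\ep/2$. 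Given a measurable $F\subset\rn$ we may assume $F$ is Borel (replace it by a Borel hull of the same measure and use monotonicity of outer measure), so that each $g_k^{-1}(F)\cap\Omega_k$ is measurable and $f^{-1}(F)\cap\Omega_k=g_k^{-1}(F)\cap\Omega_k$. Then
\[
    |f^{-1}(F)|\le|N|+\sum_{k\le K}\bigl|g_k^{-1}(F)\cap\Omega_k\bigr|+\sum_{k>K}|\Omega_k|<\frac{\ep}{2}+\sum_{k\le K}\bigl|g_k^{-1}(F)\cap\Omega_k\bigr|,
\]
so it suffices to arrange $|g_k^{-1}(F)\cap\Omega_k|<\ep/(2K)$ for each $k\le K$.

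Now fix such a $k$. As $g_k$ is Lipschitz it satisfies the Lusin $\N$ condition, so \eqref{area2} applies on measurable subsets of $\Omega_k$: taking $A=\Omega_k$ shows $N(g_k,\Omega_k,\cdot)\in L^1(\rn)$ (indeed $\int_{\rn}N(g_k,\Omega_k,y)\dy=\int_{\Omega_k}|J_{g_k}|\dx\le(\operatorname{Lip}g_k)^n|\Omega_k|<\infty$), and taking $A=g_k^{-1}(F)\cap\Omega_k$, together with $N(g_k,A,y)\le N(g_k,\Omega_k,y)\chi_F(y)$, gives
\[
    \int_{g_k^{-1}(F)\cap\Omega_k}|J_{g_k}|\dx\le\int_{F}N(g_k,\Omega_k,y)\dy.
\]
Since $J_{g_k}\neq 0$ a.e.\ on $\Omega_k$, the sets $E_j=\{x\in\Omega_k:|J_{g_k}(x)|>1/j\}$ increase to $\Omega_k$ up to a null set, so fix $j$ with $|\Omega_k\setminus E_j|<\ep/(4K)$. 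On $E_j$ we have $|g_k^{-1}(F)\cap E_j|\le j\int_{g_k^{-1}(F)\cap E_j}|J_{g_k}|\dx\le j\int_F N(g_k,\Omega_k,y)\dy$, and by absolute continuity of the Lebesgue integral of the fixed $L^1$ function $N(g_k,\Omega_k,\cdot)$ there is $\delta_k>0$ so that $|F|<\delta_k$ forces $j\int_F N(g_k,\Omega_k,y)\dy<\ep/(4K)$; then $|g_k^{-1}(F)\cap\Omega_k|\le|g_k^{-1}(F)\cap E_j|+|\Omega_k\setminus E_j|<\ep/(2K)$. Taking $\delta=\min_{k\le K}\delta_k$ completes the proof.

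I expect the only genuinely delicate step to be this last passage from smallness of $\int_{g_k^{-1}(F)}|J_{g_k}|$ to smallness of $|g_k^{-1}(F)|$: this is precisely where the hypothesis $J_f\neq 0$ a.e.\ enters, via the exhaustion by the level sets $\{|J_{g_k}|>1/j\}$. Everything else is routine — the Federer-type Lipschitz decomposition, the area formula \eqref{area2} for Lipschitz maps, and equi-absolute continuity of a single $L^1$ integral — while the finiteness $|\Omega|<\infty$ is used both to truncate the sum over $k$ and to ensure $N(g_k,\Omega_k,\cdot)\in L^1(\rn)$.
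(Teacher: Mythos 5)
Your proof is correct. The paper itself gives no argument here, deferring to \cite[Lemma 2.8]{DHM}, and your proof is the natural one: exhaust $\Omega$ up to a null set by pieces on which $f$ agrees with a Lipschitz map, use the area formula to get $L^1$-control of the multiplicity function $N(g_k,\Omega_k,\cdot)$, truncate to finitely many pieces (using $|\Omega|<\infty$), and on each piece pass from smallness of $\int_{g_k^{-1}(F)}|J_{g_k}|$ to smallness of $|g_k^{-1}(F)|$ via the level sets $\{|J_{g_k}|>1/j\}$, which is exactly where $J_f\neq 0$ a.e.\ enters. All the measurability bookkeeping (Borel hull of $F$, disjointness of the $\Omega_k$, identifying the approximate derivative of $f$ with $Dg_k$ a.e.\ on $\Omega_k$) is handled correctly. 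One slightly shorter variant is worth noting: once one knows $|F|=0\implies|f^{-1}(F)|=0$ (by the same Lipschitz-piece/area-formula reasoning together with $J_f\neq 0$ a.e.), the set function $\mu(F)=|f^{-1}(F)|$ is a finite Borel measure (finite because $\mu(\rn)=|\Omega|<\infty$) absolutely continuous with respect to Lebesgue measure, and the $\ep$--$\delta$ statement then follows from the standard fact that a finite measure absolutely continuous in the null-set sense is automatically absolutely continuous in the $\ep$--$\delta$ sense. Your version is more explicit and equally valid.
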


In order to apply the previous lemma we use the following observation.

\begin{lemma}\label{jfnonzero}
Let $\Omega\subset\rn$ be open, and let $f_k\in W^{1,1}(\Omega,\rn)$ be a sequence of homeomorphisms with $J_{f_k}>0$ a.e.\ such that $f_k\to f\in W^{1,1}(\Omega,\rn)$ pointwise a.e. Assume further that
$$
\sup_k\int_{\Omega}\varphi(J_{f_k}(x))\dx<\infty,
$$ 
where $\varphi$ satisfies \eqref{varphi}. Then $J_f\neq 0$ a.e. 
\end{lemma}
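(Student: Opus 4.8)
The plan is to show that the weak (or pointwise a.e.) limit $f$ of the homeomorphisms $f_k$ cannot have $J_f=0$ on a set of positive measure, using the convexity of $\varphi$ and the blow-up $\lim_{t\to 0^+}\varphi(t)=\infty$ together with a change of variables argument. The key idea is that if $J_f$ were zero on a set of positive measure, then the images $f_k$ would have to ``compress'' a fixed-size set more and more, which forces $\int_\Omega \varphi(J_{f_k})$ to blow up, contradicting the uniform bound.

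First I would argue by contradiction: suppose $E=\{x\in\Omega: J_f(x)=0\}$ has $|E|>0$. Since each $f_k$ is a homeomorphism with $J_{f_k}>0$ a.e., the area formula \eqref{area1} (valid without the Lusin $\N$ condition, and in fact an equality here because $f_k$ is a homeomorphism — but \eqref{area1} suffices) gives $\int_{A}J_{f_k}\le |f_k(A)|$ for every measurable $A\subset\Omega$. Next I would exploit \eqref{reverse} from Lemma~\ref{l:reverse}: since $\sup_k\int_\Omega\varphi(J_{f_k})<\infty$, there is a monotone modulus $\Phi$ with $\Phi(s)\to 0$ as $s\to 0^+$ such that $|f_k(A)|\ge\Phi(|A|)$ for all $k$ and all measurable $A$; but this gives a lower bound on images, not directly a contradiction. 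The cleaner route is instead to use convexity of $\varphi$ via Jensen's inequality on a well-chosen subset.

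The technical heart of the argument: fix a bounded measurable set $A\subset E$ with $|A|>0$ on which the convergence $f_k\to f$ is, say, uniform after passing to a subsequence (Egorov), and on which $J_f=0$. By Jensen's inequality and monotone behaviour of $\varphi$ near $0$, for the bound $\int_A\varphi(J_{f_k})\le C$ to hold we need $\int_A J_{f_k}$ to be bounded below away from a degenerate regime; more precisely, by \eqref{area1}, $\int_A J_{f_k}\le|f_k(A)|$, and since $f_k\to f$ uniformly on $A$ we have $|f_k(A)|\to|f(A)|$ in the sense that $f_k(A)$ is eventually contained in any neighbourhood of $f(A)$; but $f(A)$ has measure $\int_A J_f=0$ by the area formula applied to the limit (if $f$ satisfies $\N$ on $A$) — and here one must be careful, since we do not yet know $f$ satisfies $\N$. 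To sidestep this, I would use that $f_k(A)\subset f_k(\Omega)=\Omega'$ all lie in a bounded set, and that for a set $A$ of small measure with $J_f$ small, we can cover $f(A)$ by small balls: using the pointwise a.e.\ convergence and the fact that $f\in W^{1,1}$ is approximately differentiable a.e., one localizes to points where $J_f(x)=0$ and $f$ is approximately differentiable, showing $f$ maps a small ball around such a point into a set of arbitrarily small measure, hence $f_k$ does too for large $k$; then \eqref{area1} forces $\fint_A J_{f_k}\to 0$, and convexity with $\varphi(t)\to\infty$ as $t\to 0^+$ (so $\varphi$ is decreasing near $0$) gives $\fint_A\varphi(J_{f_k})\ge\varphi(\fint_A J_{f_k})\to\infty$, contradicting \eqref{key}-type bound.

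The main obstacle I anticipate is the circularity around the Lusin $\N$ condition for the limit $f$: the clean area-formula statement ``$|f(A)|=\int_A J_f$'' requires $\N$, which is precisely one of the things being proved later in the paper and which is \emph{not} available here. The fix is to avoid any use of the area formula for $f$ and work entirely with the $f_k$'s: localize at a density point $x_0\in E$ where $f$ is approximately differentiable with $J_f(x_0)=0$ and $Df(x_0)$ of rank $\le n-1$, pick a small ball $B=B(x_0,r)$ on which $|f(B)|\le C r^{n-1}\cdot o(1)$ (using that the approximate derivative has rank deficiency, so the image of a small ball is squeezed near an $(n-1)$-plane, giving measure $o(r^n)$); then pointwise a.e.\ convergence plus equiintegrability of $\{J_{f_k}\}$ — which follows from the $\varphi(J_{f_k})$ bound via de la Vallée Poussin — let us pass to the limit in $\int_B J_{f_k}\le|f_k(B)|$ to conclude $\limsup_k\fint_B J_{f_k}$ can be made arbitrarily small by shrinking $r$; then Jensen finishes the contradiction. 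Making the ``rank-deficient approximate derivative $\Rightarrow$ image of small ball is measure $o(r^n)$'' step rigorous for merely approximately differentiable $f$ is the one genuinely delicate point, and I would handle it by the standard Lipschitz-exhaustion of $\Omega$ (via \cite[Theorem 3.1.8]{Fe}) restricted to a set of density $1$ at $x_0$, reducing to the Lipschitz case where the claim is elementary.
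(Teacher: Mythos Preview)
Your worry about ``circularity around the Lusin $\N$ condition for $f$'' is misplaced, and this leads you down an unnecessarily complicated path. The Lipschitz-exhaustion argument you invoke at the very end (via \cite[Theorem 3.1.8]{Fe}) already gives, for any $f\in W^{1,1}$, a full-measure subset $E_0\subset E$ on which $f$ satisfies $\N$; hence $|f(E_0)|=\int_{E_0}J_f=0$ directly. This is exactly what the paper does in its first two lines, and there is no circularity: this is the weak ``$\N$ off a null set'' statement, not the full $\N$ condition proved later.

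There is also a genuine error: you claim equiintegrability of $\{J_{f_k}\}$ ``follows from the $\varphi(J_{f_k})$ bound via de la Vall\'ee Poussin''. This requires $\lim_{t\to\infty}\varphi(t)/t=\infty$, which is condition \eqref{varphi2}, \emph{not} assumed in this lemma (only \eqref{varphi} is). Without it, your ``pass to the limit in $\int_B J_{f_k}\le|f_k(B)|$'' step has no justification, and the shrinking-$r$ local argument does not close: you need $r^n\varphi(C\delta(r))\to\infty$ with $r\to 0$ and $\delta(r)\to 0$, which requires a quantitative rate you never establish.

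The paper's proof is much shorter and uses precisely the tool you dismissed. Once $|f(E_0)|=0$, pick an open $G\supset f(E_0)$ with $|G|<\tfrac12\Phi(\tfrac12|E_0|)$. Pointwise convergence gives a nested exhaustion $E_0=\bigcup_{k_0}E_{k_0}$ with $E_{k_0}=\{x\in E_0:f_k(x)\in G\text{ for all }k\ge k_0\}$; fix $k_0$ with $|E_{k_0}|>\tfrac12|E_0|$. Then $f_{k_0}(E_{k_0})\subset G$, so \eqref{reverse} gives $\Phi(|E_{k_0}|)\le |G|<\tfrac12\Phi(\tfrac12|E_0|)$, contradicting monotonicity of $\Phi$. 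No Egorov, no Jensen, no localization. Your Jensen-on-a-fixed-set idea (second paragraph) is essentially equivalent to this once you stop worrying about $\N$, since Lemma~\ref{l:reverse} is itself proved via Jensen; but the local approach in your final paragraph should be abandoned.
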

\begin{proof}
Assume by contradiction that 
$$
E:=\{x\in\Omega:\ J_f(x)= 0\}\text{ satisfies }|E|>0. 
$$
As usual we find a set $E_0\subset E$ with $|E_0|=|E|$ such that the $\N$ condition holds on $E_0$ for $f$ (see e.g.\ \cite[proof of Theorem A.35]{HK}). Moreover, we assume that 
$f_k(x)\to f(x)$ for every $x\in E_0$. By \eqref{area2} we obtain
$$
|f(E_0)|=0. 
$$
We find an open set $G\subset\Omega$ such that
$$
f(E_0)\subset G\text{ and }|G|<\tfrac{1}{2}\Phi\bigl(\tfrac{1}{2}|E_0|\bigr), 
$$
where $\Phi$ comes from Lemma \ref{l:reverse}. Since $f_k(x)\to f(x)$ we can find $k_0(x)$ such that for every $k\geq k_0(x)$ we have $f_k(x)\in G$. It follows that 
$$
E_0=\bigcup_{k_0=1}^{\infty}E_{k_0},\text{ where }E_{k_0}=\{x\in E_0:\ f_k(x)\in G\text{ for every }k\geq k_0\}.  
$$ 
These sets are nested and hence we can fix $k_0$ such that $|E_{k_0}|>\tfrac{1}{2}|E_0|$. It follows that 
$$
f_{k_0}(E_{k_0})\subset G\text{ with }|E_{k_0}|>\tfrac{1}{2}|E_0|\text{ and }|G|<\tfrac{1}{2}\Phi\bigl(\tfrac{1}{2}|E_0|\bigr)
$$
which contradicts \eqref{reverse}. 

\end{proof}

\begin{thm}\label{chm}
Let $B(c,R)\subset\rn$ and let $g\in W^{1,n-1}(B(c,R),\rn)$ be a homeomorphism. Then for a.e.\ $r\in (0,R)$ we know that $g\in W^{1,n-1}(\partial B(c,r),\rn)$ and that $g$ satisfies the Lusin $\N$ condition on the sphere $\partial B(c,r)$, i.e.,
$$
\text{ for every }E\subset \partial B(c,r)\text{ with }\haus^{n-1}(E)=0\text{ we have }\haus^{n-1}(g(E))=0. 
$$
Moreover, for such $r$ and every relatively open set $E\subset \partial B(c,r)$ we have
\eqn{estimate}
$$
\haus^{n-1}(g(E))\leq C(r)\int_{E}|\cof Dg| \,d\haus^{n-1}.
$$
\end{thm}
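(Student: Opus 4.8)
The plan is to establish, for a.e.\ radius $r\in(0,R)$, first the Sobolev regularity of the restriction $g|_{\partial B(c,r)}$ (together with $\cof Dg\in L^1(\partial B(c,r))$), then the area-type bound \eqref{estimate} for relatively open sets, and finally to deduce the Lusin $\N$ condition on the sphere as a corollary of that bound; taking $E$ relatively open in \eqref{estimate} is precisely what lets us arrange this order and sidestep the apparent circularity between \eqref{estimate} and the $\N$ condition.

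For the first point I would pass to polar coordinates: $\Psi(\rho,\omega)=c+\rho\omega$ is bi-Lipschitz from $(\delta,R)\times\partial B(c,1)$ onto $B(c,R)\setminus\overline{B(c,\delta)}$ for every $\delta>0$, so $g\circ\Psi\in W^{1,n-1}\bigl((\delta,R)\times\partial B(c,1),\rn\bigr)$. The standard Fubini/ACL description of Sobolev functions on a product domain gives that for a.e.\ $\rho$ the slice $\omega\mapsto(g\circ\Psi)(\rho,\omega)$ lies in $W^{1,n-1}(\partial B(c,1),\rn)$; unravelling the coordinates, $g\in W^{1,n-1}(\partial B(c,r),\rn)$ for a.e.\ $r$, with the tangential gradient dominated by $|Dg|$ on the sphere. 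Since $|\cof Dg|\le C(n)|Dg|^{n-1}$ and $|Dg|^{n-1}\in L^1(B(c,R))$, Fubini also yields $\cof Dg\in L^1(\partial B(c,r))$ for a.e.\ $r$; letting $\delta\to0$ along a sequence covers all radii.

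Granting the bound \eqref{estimate} for relatively open sets, the $\N$ condition on $\partial B(c,r)$ is immediate for a.e.\ such $r$: if $\haus^{n-1}(E)=0$, choose relatively open $U_j\supset E$ with $\haus^{n-1}(U_j)\to0$; by absolute continuity of the integral (using $\cof Dg\in L^1(\partial B(c,r))$ from the previous step) we get $\haus^{n-1}(g(E))\le\haus^{n-1}(g(U_j))\le C(r)\int_{U_j}|\cof Dg|\,d\haus^{n-1}\to0$. Once both the Sobolev regularity and the $\N$ condition are available, the area formula \eqref{area3} applies to $g_r:=g|_{\partial B(c,r)}$ on the $(n-1)$-manifold $\partial B(c,r)$ and gives, for every measurable $E$, $\haus^{n-1}(g(E))\le\int_{\rn}N(g_r,E,y)\,d\haus^{n-1}(y)=\int_EJ_{n-1}g_r\,d\haus^{n-1}$; together with the pointwise inequality $J_{n-1}g_r(x)=\bigl|(\Lambda^{n-1}Dg(x))(\tau_1\wedge\dots\wedge\tau_{n-1})\bigr|\le\|\Lambda^{n-1}Dg(x)\|\le C(n)\,|\cof Dg(x)|$ (for $\tau_1,\dots,\tau_{n-1}$ an orthonormal basis of $T_x\partial B(c,r)$, the $(n-1)$-vector $\tau_1\wedge\dots\wedge\tau_{n-1}$ being a unit vector and $\Lambda^{n-1}Dg$ having matrix $\cof Dg$ up to signs), this reproves \eqref{estimate} for all measurable $E$, with a constant that in fact depends only on $n$.

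The genuine obstacle is proving \eqref{estimate} for relatively open $E$ in the first place, since \eqref{area3} is not yet at our disposal — it presupposes the very $\N$ condition we want. Here the homeomorphism hypothesis is essential: for $n\ge3$ the exponent $n-1$ is critical for maps of an $(n-1)$-dimensional set, and without it the restriction need not satisfy $\N$. I would follow the approach of \cite{CHM}: decompose $\partial B(c,r)$, up to an $\haus^{n-1}$-null set, into countably many compact pieces on which $g_r$ is Lipschitz (possible because $g_r$ is approximately differentiable $\haus^{n-1}$-a.e., being Sobolev on the sphere), so that on these pieces the area formula and the cofactor bound give the estimate directly; and then exploit the quantitative behaviour of the $W^{1,n-1}$ homeomorphism $g$ on thin spherical shells about $\partial B(c,r)$ — the images $g(\partial B(c,\rho))$ being pairwise disjoint topological spheres with $\rho\mapsto|g(B(c,\rho))|$ bounded and nondecreasing, hence differentiable for a.e.\ $r$ — to show that the leftover $\haus^{n-1}$-null part of $\partial B(c,r)$ is mapped, for a.e.\ $r$, onto a set of $\haus^{n-1}$-measure still controlled by $\int|\cof Dg|$ over a shell. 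Turning this last point into the quantitative estimate \eqref{estimate}, via a Vitali covering argument and the $r$-dependent constant, is the technical core of the statement.
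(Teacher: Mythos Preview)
Your outline is correct, but the logical ordering you propose --- first prove \eqref{estimate} for relatively open $E$, then deduce the Lusin $\N$ condition --- is more circuitous than the paper's and, once unpacked, collapses back into the paper's argument. The paper simply cites \cite[Lemma 4.1]{CHM} directly for the $\N$ condition on a.e.\ sphere, and then \eqref{estimate} is an immediate consequence of the area formula \eqref{area3} (after a bilipschitz flattening of the sphere, which is where the $r$-dependence of the constant enters). In your route, the ``genuine obstacle'' you identify --- controlling $\haus^{n-1}\bigl(g(E\cap N)\bigr)$ for the $\haus^{n-1}$-null leftover $N$ from the Lipschitz decomposition --- is precisely the statement that $g_r$ satisfies $\N$ on $N$, i.e., the content of \cite{CHM} that you invoke anyway; and your shell/monotonicity sketch does not obviously localize from the whole sphere to a relatively open subset $E$, so in practice you would establish $\haus^{n-1}(g(N))=0$ globally and only then return to $E$. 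In other words, both approaches rest on the same hard input from \cite{CHM}; the paper takes the short path $\N\Rightarrow$ estimate, whereas your detour through ``estimate on opens'' buys nothing extra and introduces a localization issue you would have to talk your way around.
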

\begin{proof}
The fact that $g\in W^{1,n-1}(\partial B(c,r),\rn)$ for a.e.\ $r$ is standard and follows e.g.\ by using the ACL condition (on circles and not lines). 
The part about the validity of Lusin $\N$ condition on a.e.\ sphere follows from \cite[Lemma 4.1]{CHM}. 

Let us have a homeomorphism $h\in W^{1,1}(\er^{n-1},\rn)$ which satisfies the Lusin $\N$ condition. 
Then the area formula~\eqref{area3} implies that for every measurable set $E\subset\er^{n-1}$ we have
$$
\haus^{n-1}(h(E))=\int_E|J_{n-1}h(x)| \dx,
$$
where $J_{n-1}h$ is the $(n-1)$-dimensional Jacobian, i.e.\ it consists of all $(n-1)\times(n-1)$ subdeterminants. 
To obtain the wanted estimate we simply do a bilipschitz change of variables (locally) from round $\partial B(c,r)$ to flat $\er^{n-1}$ and the result for $h$ implies our estimate \eqref{estimate} for $g$. Of course the constant in the bilipschitz change of variables might depend on $r$ so our constant in \eqref{estimate} could depend on $r$. 
\end{proof}

\subsection{Equiintegrability of $Df^{-1}_m$} 

The following theorem tells us that our mappings $f_m$ from Theorem \ref{main} have equiintegrable $Df^{-1}_m$. It follows that up to a subsequence $f^{-1}_{m}$ converge weakly to some $h\in W^{1,1}(\Omega',\rn)$, see \cite[Theorem B.103]{L} and \cite[Lemma 1.2 in Chapter 2.1]{Dac2008}. 

\begin{thm}\label{equiintegrable} 
    Let $\Omega$, $\Omega'\subset\rn$ be domains. Let functions $\ff$ and $A$ satisfy \eqref{varphi} and \eqref{A}. Then there is a continuous monotone function $B$ with $\lim_{t\to\infty}\frac{B(t)}{t}=\infty$ such that: \\ 
    Let $f_m\in W^{1,n-1}(\Omega,\Omega')$ be homeomorphisms with $J_{f_m}(x)>0$ a.e., $J_{f_m^{-1}}(y)>0$ a.e.\ and 
    $$
        \sup_m\int_{\Omega}\left(|D f_m(x)|^{n-1}+A(|\cof Df_m(x)|)+\ff(J_{f_m}(x))\right)\dx<\infty. 
    $$
    Then 
    $$
        \sup _m\int_{\Omega'}B\bigl(|Df_{m}^{-1}(y)|\bigr)\dy<\infty. 
    $$
\end{thm}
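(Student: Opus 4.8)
The plan is to exploit the pointwise algebraic identity relating the differential of $f_m^{-1}$ at a point $y=f_m(x)$ to the cofactor matrix of $Df_m$ and the Jacobian $J_{f_m}$, namely $Df_m^{-1}(f_m(x)) = \frac{1}{J_{f_m}(x)}\bigl(\cof Df_m(x)\bigr)^T$, which holds a.e.\ because $f_m$ is a Sobolev homeomorphism with $J_{f_m}>0$ a.e.\ and $f_m^{-1}\in W^{1,1}$ (the reference \cite{CHM} cited in the paper). Hence $|Df_m^{-1}(f_m(x))| = |\cof Df_m(x)|/J_{f_m}(x)$ a.e. The idea is then to build a single superlinear convex function $B$ out of $A$ (controlling $|\cof Df_m|$) and $\varphi$ (controlling $J_{f_m}$ from below and from above) so that $\int_{\Omega'} B(|Df_m^{-1}|)$ can be estimated, after the change of variables $y=f_m(x)$ (valid by the Lusin $\N$ condition for $f_m$, via \eqref{area2}), by $\int_\Omega B\bigl(|\cof Df_m(x)|/J_{f_m}(x)\bigr)\,J_{f_m}(x)\,dx$, and this last integral is controlled by $\sup_m\int_\Omega\bigl(A(|\cof Df_m|)+\varphi(J_{f_m})\bigr)\,dx$.

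The construction of $B$ is the heart of the matter. Write $t=|Df_m^{-1}(y)|$ and, at the corresponding $x$, set $c=|\cof Df_m(x)|$, $J=J_{f_m}(x)$, so $t=c/J$ and the weight in the change of variables is $J=c/t$. We must produce $B$ superlinear with
$$
\int_{\Omega'} B(t)\,dy = \int_\Omega B\!\left(\frac{c}{J}\right) J\,dx \le C\Bigl(1+\int_\Omega A(c)\,dx + \int_\Omega \varphi(J)\,dx\Bigr).
$$
Split the domain of integration in $x$ according to whether $t=c/J$ is ``caused'' by $c$ being large or by $J$ being small. On the set where $J\ge 1$ we have $t\le c$, so it suffices to bound $B(c)\cdot c$ by a constant times $A(c)$ plus a constant; since $A$ is superlinear but we have no upper rate, $B$ must be chosen to grow slowly enough — concretely one takes $B$ so that $B(s)\cdot s \le A(s)$ for $s$ large, e.g.\ $B(s) = A(s)/s$ for large $s$ suitably convexified and made monotone; the hypothesis $\lim_{s\to\infty}A(s)/s=\infty$ is exactly what guarantees $B(s)\to\infty$, and one checks $B(s)/s\to\infty$ fails in general — so in fact one needs a more careful two-sided splitting. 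On the set where $J<1$ we have $t=c/J$, and here $J$ small is the obstruction, which is controlled by $\varphi(J)\to\infty$ as $J\to 0^+$; one uses a Young-type inequality $B(c/J)\cdot J \le \varepsilon A(c) + (\text{something})\cdot\varphi(J) + C$ obtained by the Fenchel--Young inequality after identifying the right dual pairing. A clean way to organize this is: since $A$ and $\varphi$ are superlinear, by de la Vallée-Poussin there is no loss in assuming $A,\varphi$ grow as slowly as we like while still superlinear, and then defining $B$ implicitly through $B^{-1}(r) \sim A^{-1}(r)\cdot \varphi^{-1}(1/ r)^{-1}$ or a comparable combination; the precise choice is routine convex-analysis bookkeeping once the splitting is set up.

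The main obstacle I anticipate is precisely pinning down the definition of $B$ so that it is simultaneously (i) superlinear, (ii) convex and monotone (so that Jensen/change-of-variables manipulations and the later weak-compactness application go through), and (iii) compatible with \emph{both} the $c$-large regime (handled by $A$) and the $J$-small regime (handled by $\varphi$), given that we are allowed no quantitative growth rate on either $A$ or $\varphi$ beyond superlinearity and the $\varphi(0^+)=\infty$ blow-up. The standard trick is to first replace $A$ and $\varphi$ by smaller superlinear convex functions with controlled (e.g.\ concave-type) growth using de la Vallée-Poussin's characterization, which costs nothing since we only need a \emph{sup} bound, and then to take $B$ essentially as the ``infimal convolution''-type combination of the relevant inverse functions, finally replacing it by a convex minorant. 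One should also double-check the edge cases in the change of variables: the Lusin $\N$ condition for $f_m$ gives \eqref{area2} with multiplicity one (since $f_m$ is injective), so $\int_{\Omega'}B(|Df_m^{-1}|)\,dy = \int_\Omega B(|Df_m^{-1}(f_m(x))|)\,J_{f_m}(x)\,dx$ holds exactly, and the identity $Df_m^{-1}(f_m(x))\cdot Df_m(x)=\mathrm{Id}$ at points of approximate differentiability with $J_{f_m}(x)>0$ gives the cofactor formula; these are the only analytic inputs and they are all already available from the cited literature and from Theorem \ref{chm}-type arguments.
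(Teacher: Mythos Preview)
Your overall strategy matches the paper's: change variables via $y=f_m(x)$, use $|Df_m^{-1}(f_m(x))|=|\cof Df_m(x)|/J_{f_m}(x)$, and construct $B$ from $A$ and $\varphi$ so that $B(c/J)\,J$ is controlled by $A(c)+\varphi(J)$. Two small corrections: the Lusin $\N$ condition for $f_m$ is \emph{not} a hypothesis of this theorem, but only the inequality direction \eqref{area1} of the change of variables is needed, and that holds for any one-to-one Sobolev map; and Theorem~\ref{chm} (about spheres) plays no role here.

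The real gap is precisely the construction of $B$, which you defer as ``routine convex-analysis bookkeeping'' and ``a more careful two-sided splitting''. In the paper this construction is almost the entire proof and it is not routine. Your $J\ge 1$ versus $J<1$ splitting quickly hits the problem you yourself flag (the naive $B=A(\cdot)/\cdot$ is not superlinear), and the vague ``Young-type inequality \dots\ after identifying the right dual pairing'' for $J<1$ does not specify what $B$ actually is, nor why a single $B$ depending only on $A,\varphi$ (not on the particular sequence) handles both regimes. The paper's device is to write $B(t)=t\,b(t)$ where $b$ is built so that $b\le a$ (with $A(t)=t\,a(t)$), $b(t)\le (A')^{-1}(\varphi(1/t))$ for large $t$, $b(t)\to\infty$, and --- crucially --- $b(st)\le b(s)+b(t)$. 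The factorisation $B(t)=t\,b(t)$ turns $B(c/J)\,J$ into $c\,b(c/J)$, absorbing the problematic Jacobian factor entirely; the subadditivity of $b$ on products then yields $c\,b(c)+c\,b(1/J)$, where the first term equals $B(c)\le A(c)$ and the second is dispatched by Young's inequality with the conjugate $A'$ using the constraint $b(t)\le (A')^{-1}(\varphi(1/t))$. The explicit construction of such a $b$ (roughly $b(t)\approx\psi(t)\log t$ for a suitable nonincreasing $\psi$ extracted from $a$, then made monotone) occupies the bulk of the paper's argument. Your proposal has the right skeleton but is missing these two ideas --- the $B(t)=t\,b(t)$ factorisation and the pseudo-logarithmic property of $b$ --- and without them the ``bookkeeping'' cannot be completed.
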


\begin{proof}
    Let us write $A(t)=ta(t)$, $B(t)=tb(t)$ and assume that $b$ is a suitable function such that $b(t)\leq a(t)$,
    \begin{equation}\label{pseudolog}
        b(st)\leq b(s)+b(t),
    \end{equation}
    and $B$ is continuous and monotone with superlinear growth. We give a detailed construction of such $b$ below. 
    By differentiation of $f_m^{-1}(f_m(x))=x$ we obtain
    $$
        Df_m^{-1}(f_m(x))Df_m(x)=I\text{ and }J_{f_m^{-1}}(f_m(x))J_{f_m}(x)=1 
    $$
    for a.e.~$x$ (see \cite[Lemma 2.1]{FusMosSbo2008}).
    Using the previous line, \eqref{pseudolog}, \eqref{area1} and $A\cdot\cof A=\det A\cdot I$ we have
    \begin{equation*}
    \begin{aligned}
        \int_{\Omega'}&B(|Df_{m}^{-1}(y)|)\dy= \int_{\Omega'}|Df_{m}^{-1}(y)|b(|Df_{m}^{-1}(y)|)\frac{J_{f_m^{-1}}(y)}{J_{f_m^{-1}}(y)}\dy\\
        &\leq \int_{\Omega}|Df_{m}^{-1}(f_m(x))|\ b(|Df_{m}^{-1}(f_m(x))|)\frac{1}{J_{f_m^{-1}}(f_m(x))}\dx\\
        &=\int_{\Omega}|(Df_{m}(x))^{-1}|\ b(|(Df_{m}(x))^{-1}|)J_{f_m(x)}\dx\\
        &=\int_{\Omega}|\cof Df_{m}(x)|\ b\left(|\cof Df_{m}(x)| \frac{1}{J_{f_m}(x)}\right)\dx\\
        &\leq \int_{\Omega}|\cof Df_{m}(x)|\ b(|\cof Df_{m}(x)|)\dx+\int_{\Omega}|\cof Df_{m}(x)|\ b\left(\frac{1}{J_{f_m}(x)}\right)\dx.
    \end{aligned}
    \end{equation*}
    From $B(t)=tb(t)\leq A(t)$ we obtain that the first term is uniformly bounded. By the Young inequality, we estimate the second term
    $$
        \int_{\Omega}|\cof Df_{m}(x)|\ b\left(\frac{1}{J_{f_m}(x)}\right)\dx\leq \int_{\Omega}A(|\cof Df_{m}(x)|)\dx+ \int_{\Omega}A'\left(b\left(\frac{1}{J_{f_m}(x)}\right)\right)\dx
    $$
    where $A'$ is the fixed conjugate function to our convex function $A$ (see \cite[Chapter 2.4]{HH}).
    If we ask also for 
        $$
        b(t)\leq (A')^{-1}(\varphi(\tfrac{1}{t}))
    $$
    for large $t$, we have 
    $$
        A'\left(b\left(\frac{1}{J_{f_m}(x)}\right)\right)\leq \ff(J_{f_m}(x)) + C
    $$
    for every $t$ and we are finished.

    Now we find such function $b$. We define auxiliary functions $\overline \psi$ and $\overline b$ and take $\psi$ and $b$ which are smaller than their counterparts and monotone continuous.
    Let us set 
    $$\overline\psi(t)=\frac{a(t)}{\log(t)}$$
     for $t>1$. It is continuous and from the continuity and positivity of $a$ on $(0, \infty)$ we know that $\lim_{t\to 1+} \overline\psi(t)=\infty$. Therefore we can define
    $$
        \psi(t)=
        \begin{cases}1,& 1\leq t< t_0=\min\{s>1: \overline\psi(s)=1\},\\
        \min\{\overline\psi(s), s\in[t_0, t]\},& t_0 \leq t,
        \end{cases}
    $$
    which is a positive continuous nonincreasing function less or equal to $\overline\psi$.
 
    Define
    $$
        \overline b(t)=
        \begin{cases}0,& 0<t\leq 1,\\
        \psi(t)\log(t),& 1<t<\infty.
        \end{cases}
    $$
    Since $\psi$ is continuous nonincreasing bounded on $(1,\infty)$, $\overline b$ is also continuous and \eqref{pseudolog} holds for $s, t\geq 1$ (since $\psi$ is nonincreasing and $\log$ satisfies \eqref{pseudolog}) and $s,t<1$ (since $\overline{b}(st)=0$). Moreover we have $\overline b(t)\leq a(t)$.
Now we check that 
    $$
        \lim_{t\to\infty} \overline b(t) = \lim_{t\to\infty} \psi(t)\log(t)=\infty.
    $$
    Either $\lim_{t\to\infty}\psi(t)>0$ and the statement holds, or $\lim_{t\to\infty}\psi(t)=0$. In the later case, we can find a sequence $t_0<t_1<t_2\dots$ such that 
    $$t_k=\min\{s>t_0:\overline\psi(s)=1/k\}$$
     and $t_k\to\infty$ (since $\overline \psi$ is positive). Then on $[t_0,t_k]$ we have $\psi(t)\geq 1/k =\overline \psi (t_k)= a(t_k)/\log(t_k)$ and consequentially 
    \begin{align*}
        \liminf_{t\to\infty} \overline b(t) &= \liminf_{k\to\infty} \min_{t\in [t_{k}, t_{k+1}]} \overline b(t) = \liminf_{k\to\infty} \min_{t\in [t_{k}, t_{k+1}]} \psi(t)\log(t)\geq \liminf_{k\to\infty}  \psi(t_{k+1})\log(t_k)\\
        &\geq \liminf_{k\to\infty} \frac{\log(t_k)}{k+1} \geq \frac{1}{2} \liminf_{k\to\infty} \frac{\log(t_k)}{k} = \frac{1}{2} \liminf_{k\to\infty} a(t_k)=\infty.
    \end{align*}    
        Now we want to resolve \eqref{pseudolog} for $s<1$, $t\geq 1$. If $st\leq 1$, it is clear. In the other case, we need $\overline b(st)\leq \overline b(t)$ which we do not have for $\overline b$ in general as it does not have to be monotonous. Therefore we define
    $$
        b(t)=\inf_{s\in [t, \infty)} \overline b(s).
    $$
    That function is clearly monotone, continuous, smaller than $\overline b$ and tends to $\infty$. For $s,t<1$ \eqref{pseudolog} is still trivial. For $s<1$, $t\geq 1$ it follows from monotonicity. For $s,t\geq 1$ we find $s_0=\max \{r: \overline b(r) = b(s)\}$ and $t_0$ analogously. Obviously from the definition of $b$ we have $s\leq s_0$, $t\leq t_0$. Then we have
    $$
        b(st)\leq b(s_0 t_0) \leq \overline b(s_0 t_0) \leq \overline b(s_0) + \overline b(t_0) = b(s) + b(t).
    $$
    Now we know that $B(t)=tb(t)$ is continuous nonnegative non-decreasing with $B(0)=0$ and $\lim_{t\to\infty}\frac{B(t)}{t}=\infty$.

     The last step is to show that we can ask 
    $$
        b(t)\leq (A')^{-1}(\varphi(\tfrac{1}{t}))
    $$
    for large $t$.
     We can use a similar procedure as before (replacing $a$ by $\min\{a, (A')^{-1}\}$), since $\lim_{t\to\infty}(A')^{-1}(t)=\infty$ ($A'$ is convex, negative in $0$ and positive for large values, so going to $\infty$ --- and so does its inverse, too).

\end{proof}

\subsection{Degree for continuous mappings}
Let $\Omega\subset\rn$ be a bounded open set.
Given a continuous map $f\colon\overline\Omega\to\rn$ and $y\in \rn\setminus f(\partial\Omega)$,
we can define the {\it topological degree} as
$$\deg(f,\Omega,y)=\sum_{\Omega\cap f^{-1}(y)} \operatorname{sgn}(J_f(x))$$
if $f$ is smooth in $\Omega$ and $J_f(x)\neq 0$ for each $x\in \Omega\cap f^{-1}(y)$.
By uniform approximation, 
this definition can be extended to an arbitrary continuous mapping 
$f\colon\overline\Omega\to\rn$. Note that the degree depends only on
values of $f$ on $\partial \Omega$.

If $f\colon \overline\Omega\to\rn$ is a homeomorphism,
then either $\deg (f,\Omega,y)=1$ for all $y\in f(\Omega)$
($f$ is \textit{sense preserving}), or 
$\deg (f,\Omega,y)=-1$ for all $y\in f(\Omega)$
($f$ is \textit{sense reversing}). If, in addition,
$f\in W^{1,n-1}(\Omega,\rn)$, then this topological orientation
corresponds to the sign of the Jacobian. More precisely, we have

\begin{prop}[\cite{HM}]\label{p:top=anal} 
    Let $f\in W^{1,n-1}(\Omega,\rn)$ be 
    a homeomorphism on $\overline\Omega$ with $J_f>0$ a.e.
    Then 
    $$
        \deg(f,\Omega,y)=1,\qquad y\in f(\Omega).
    $$
\end{prop}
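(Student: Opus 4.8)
The plan is to reduce the assertion to a local statement at a point of approximate differentiability and then to compare $f$ with its derivative on small spheres. As recalled just above the statement, a homeomorphism $f\colon\overline\Omega\to\rn$ satisfies $\deg(f,\Omega,y)\equiv 1$ on $f(\Omega)$ (sense preserving) or $\deg(f,\Omega,y)\equiv -1$ on $f(\Omega)$ (sense reversing), so it suffices to rule out the sense‑reversing alternative under the hypothesis $J_f>0$ a.e. Moreover, by the excision/additivity property of the degree, for every $x_0\in\Omega$ and every ball $B(x_0,r)\Subset\Omega$ one has $\deg(f,\Omega,f(x_0))=\deg(f,B(x_0,r),f(x_0))$, because $f^{-1}(f(x_0))=\{x_0\}\subset B(x_0,r)$ and $f(x_0)\notin f(\overline\Omega\setminus B(x_0,r))$ by injectivity. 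Hence it is enough to produce one point $x_0\in\Omega$ and one admissible radius $r$ with $\deg(f,B(x_0,r),f(x_0))=1$.

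Since $f\in W^{1,n-1}(\Omega,\rn)\subset W^{1,1}_{\loc}(\Omega,\rn)$, $f$ is approximately differentiable a.e., and as $J_f>0$ a.e.\ I would fix a point $x_0$ of approximate differentiability with $L:=Df(x_0)$, $\det L=J_f(x_0)>0$; normalise $x_0=0$, $f(0)=0$. By Theorem~\ref{chm}, for a.e.\ sufficiently small $r>0$ the restriction $f|_{\partial B(0,r)}$ lies in $W^{1,n-1}(\partial B(0,r),\rn)$ and satisfies the Lusin $\N$ condition on $\partial B(0,r)$; since $f$ is injective and continuous we also have $\dist(0,f(\partial B(0,r)))>0$, so $\frac{f}{|f|}\in W^{1,n-1}(\partial B(0,r),\mathbb S^{n-1})$ and (with a continuous representative) the Brouwer degree is given by the solid‑angle formula
\[
    \deg(f,B(0,r),0)=\frac1{\haus^{n-1}(\mathbb S^{n-1})}\int_{\partial B(0,r)}\Bigl(\tfrac{f}{|f|}\Bigr)^{\!*}\omega ,
\]
where $\omega$ is the volume form of $\mathbb S^{n-1}$; the right‑hand side is a well‑defined integer for such maps, and for the affine map $x\mapsto Lx$ the same formula gives $\deg(L,B(0,r),0)=\operatorname{sgn}\det L=1$.

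The crux is then to show that along a suitable sequence $r_k\to0$ the solid‑angle integrals of $f$ and of $L$ on $\partial B(0,r_k)$ differ by strictly less than $\haus^{n-1}(\mathbb S^{n-1})$, which forces the two integers to coincide and hence $\deg(f,B(0,r_k),0)=1$. Approximate differentiability at $0$ gives, off a ``bad'' set $\Sigma_r\subset\partial B(0,r)$ with $\haus^{n-1}(\Sigma_r)=o(r^{n-1})$ for a.e.\ $r$, that $f(x)=Lx+o(r)$ uniformly on $\partial B(0,r)\setminus\Sigma_r$, so there the unit vectors $\frac{f}{|f|}$ and $\frac{Lx}{|Lx|}$ are uniformly close and $|f|\gtrsim r$; a telescoping/integration‑by‑parts estimate for the difference of two pullback forms then shows the contribution of the good part is small. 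The contribution of $\Sigma_r$ is dominated by $\int_{\Sigma_r}|Df|^{n-1}|f|^{-(n-1)}\,d\haus^{n-1}$, which one must make small for many $r$ by combining the polar Fubini identity $\int_0^{r_0}\!\int_{\partial B(0,r)}|Df|^{n-1}\,d\haus^{n-1}\,dr<\infty$ with a further splitting according to the size of $|f|$ on $\Sigma_r$ and an absolute‑continuity argument. Picking $r_k$ along which every error term vanishes yields $\deg(f,B(0,r_k),0)=1$, hence $\deg(f,\Omega,\cdot)\equiv1$ on $f(\Omega)$.

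I expect the main obstacle to be precisely this last estimate: the $W^{1,n-1}$‑norm of $f$ on the $(n-1)$‑dimensional sphere $\partial B(0,r)$ scales critically (so it is merely finite, not small, for generic radii), and $|f|$ need not be bounded below on the bad set $\Sigma_r$, so controlling $\int_{\Sigma_r}|Df|^{n-1}|f|^{-(n-1)}\,d\haus^{n-1}$ uniformly along a sequence of radii is the delicate quantitative point — this is exactly the argument carried out in \cite{HM}. By contrast, the degree dichotomy for homeomorphisms, the reduction by excision, and the validity of the solid‑angle formula on a.e.\ sphere (via Theorem~\ref{chm}) are comparatively routine.
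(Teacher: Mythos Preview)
The paper supplies no proof of this proposition; it simply records the result and cites \cite{HM}. So there is no ``paper's own proof'' to compare against beyond that citation.

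Your sketch correctly captures the architecture of the argument in \cite{HM}: the degree dichotomy for homeomorphisms, the excision reduction to a single small ball centred at a point of approximate differentiability with $J_f(x_0)>0$, and the comparison of $f$ with its linearisation $L$ on small spheres. You are also right that the entire difficulty sits in the critical--scaling estimate on the bad portion of $\partial B(0,r)$; the $(n-1)$--energy on an $(n-1)$--sphere does not become small as $r\to0$, and $|f|$ is not a priori bounded below on $\Sigma_r$, so controlling that contribution is genuinely delicate. Your candour that this step is only gestured at, and that it is ``exactly the argument carried out in \cite{HM}'', is accurate.

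One minor remark on presentation: the solid--angle formula you write requires $|f|>0$ on $\partial B(0,r)$, which here is automatic since $f$ is a homeomorphism with $f(0)=0$. In \cite{HM} the comparison with $L$ is organised slightly differently (via the integral representation of the degree, essentially \eqref{weakdegree}, together with a careful choice of radii), rather than literally through the pullback of the sphere volume form, but the analytic content and the obstacle you isolate are the same. In short: your outline matches the cited source in structure and correctly names the hard step; what remains is precisely the quantitative work of \cite{HM}.
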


\medskip

\subsection{Degree for $W^{1,n-1}\cap L^{\infty}$ mappings}\label{degree}
 
Let $B$ be a ball,  
$f\in W^{1,n-1}(\partial B,\rn)\cap C(\partial B,\rn)$, $|f(\partial B)|=0$, 
and $\ue\in C^1(\rn,\rn)$, then (see \cite[Proposition 2.1]{MS})
\begin{equation}\label{weakdegree}
    \int_{\rn}\deg(f,B,y)\operatorname{div} \ue(y)\dy=
    \int_{\partial B} (\ue\circ f)\cdot (\Lambda_{n-1} D_{\tau}f)\nu\, d\haus^{n-1},
\end{equation}
where $D_{\tau}f$ denotes the tangential gradient and $\Lambda_{n-1} D_{\tau}f$ is the restriction of $\operatorname{cof} Df$ to the corresponding subspace (see \cite{DHM} for details). 

Let $\mathcal M(\rn)=C_0(\rn)^*$ be the space of all signed Radon measures on $\rn$.
By \eqref{weakdegree} we see that $\deg(f,B,\cdot)\in BV(\rn)$ and
\begin{equation}\label{BVest}
    \|D\deg(f,B,\cdot)\|_{\mathcal M(\rn)}\le 
    C\|\Lambda_{n-1} D_{\tau}f\|_{L^1(\partial B)}\le C\|D_{\tau}f\|_{L^{n-1}(\partial B)}^{n-1}.
\end{equation}

Following \cite{CDL} (see also \cite{BN}) we need a more general version 
of the degree 
which works for mappings in $W^{1,n-1}\cap L^{\infty}$ 
that are not necessarily continuous. 
Although only the three-dimensional case 
is discussed
in \cite{CDL}, the arguments pass in the general case as well. The definition is in fact based 
on \eqref{weakdegree}.

\begin{definition}\label{defdegree}
    Let $B\subset\rn$ be a ball and let $f\in W^{1,n-1}(\partial B,\rn)\cap L^{\infty}(\partial B,\rn)$. Then we define 
    $\Deg(f, B, \cdot)$ as the distribution satisfying
    \begin{equation}\label{qqq}
        \int_{\rn}\Deg(f,B,y)\psi(y)\dy=
        \int_{\partial B} (\ue\circ f)\cdot(\Lambda_{n-1} D_{\tau}f) \nu\, d\haus^{n-1}
    \end{equation}
    for every test function $\psi\in C_c^{\infty}(\rn)$ 
    and every 
    $C^{\infty}$ vector field $\ue$ on $\rn$ satisfying $\div \ue=\psi$.
\end{definition}

As in \cite{CDL} (see also \cite{DHM}) it can be verified that the right-hand side does not depend on the way 
$\psi$ is expressed as $\div \ue$ and that the distribution $\Deg(f, B,\cdot)$ can be represented as a $BV$ function.  

Assume that $f$, $g\in W^{1,n-1}(\partial B,\rn)\cap L^{\infty}(\partial B,\rn)$. As in \cite[(2.5)]{DHM} we obtain the following version of some ``weak isoperimetric inequality''
\begin{equation}\label{odhad}
\begin{aligned}
    \bigl|\bigl\{y\in\rn:\ &\Deg(f,\opartial B,y)\neq \Deg(g,\opartial B,y)\bigr\}\bigr|^{\frac{n-1}{n}}
    \leq \\
    &\leq C \int_{\partial B\cap\{f\neq g\}}\left( |D_{\tau}f(x)|^{n-1}+|D_{\tau}g(x)|^{n-1}\right)d\haus^{n-1}(x).\\
\end{aligned}
\end{equation}

We need also the classical isoperimetric inequality (see e.g.\ \cite[Theorem 2 in section 5.6.2 and Theorem 2 in section 5.7.3]{EG}). Let $E\subset\rn$ be and open set with finite perimeter. 
Then 
\eqn{isoperimetric}
$$
|E|^{1-\frac{1}{n}}\leq C \haus^{n-1}(\partial E). 
$$

\begin{remark}\label{Deg=deg}
    Let $B$ be a ball and $f\in W^{1,n-1}(\partial B,\rn)\cap C(\overline B,\rn)$.
    If $|f(\partial B)|=0$, then $\Deg(f,B,y)=\deg(f,B,y)$ for 
    a.e.\  $y\in\rn$. We use different symbols to distinguish and emphasize that
    $\deg$ is defined pointwise on $\rn\setminus f(\partial B)$, whereas
    $\Deg$ is determined only up to a set of measure zero.
\end{remark}

\subsection{$\INV$ condition}
Analogously  to \cite{CDL} (see also \cite{MS}) we define the $\INV$ class.

Let $A\subset\Omega\subset\rn$. We say that $x\in \rn$ is a \textit{point of density one} (or just \textit{point of density}) of a set $A$ if
$$
\lim_{r\to 0+}\frac{|B(x,r)\cap A|}{|B(x,r)|}=1. 
$$
It is well-known that a.e.~$x\in A$ is a point of density of $A$. 

\begin{definition}[geometrical image]
    Let $\Omega\subset\rn$ be open, $f\colon \Omega \to \rn$ be a function which is approximately differentiable almost everywhere. Given a set $A \subset \Omega$ we call the geometrical image of $A$ through $f$ the set given by $f\left(\Omega_{d} \cap A\right)$, where $\Omega_d$ denotes the set where $f$ is approximatively differentiable. 
    Further on, we denote this geometrical image by $f(A)$ (since $f$ is nevertheless defined only up to a set of measure zero).
\end{definition}

\begin{definition}[topological image]\label{image} 
    Let $B\subset\rn$ be a ball and let $f\in W^{1,n-1}(\partial B,\rn)\cap L^{\infty}(\partial B,\rn)$. 
    We define the topological image of $B$ under $f$, $\topi(f,B)$, 
    as the set of all points of density one of the set $\{y\in \rn:\ \Deg(f,\opartial B,y)\neq 0\}$. 
\end{definition}

\begin{definition}[{$\INV$ condition}]\label{inv} 
    Let $f\in W^{1,n-1}(\Omega,\rn)\cap L^{\infty}(\Omega,\rn)$. We say that $f$ satisfies $\INV$ for a ball 
    $B\subset\subset\Omega$ 
    if 
    \begin{enumerate}[(i)]
      \item its trace on $\partial B$ is in $W^{1,n-1}(\partial B, \rn)\cap L^{\infty}(\partial B, \rn)$;
    	\item $f(x)\in \topi (f,B)$ for a.e.\ $x\in B$;
    	\item $f(x)\notin \topi (f,B)$ for a.e.\ $x\in\Omega\setminus B$.
    \end{enumerate}
    We say that $f$ satisfies $\INV$ if for every $a\in\Omega$ there is $r_a>0$ such that for $\mathcal{H}^1$-a.e.~$r\in (0,r_a)$ it satisfies $\INV$ for $B(a,r )$. 
\end{definition}

\begin{remark}\label{r:inv}
    If $f$, in addition, satisfies $J_f>0$ a.e., then preimages of sets of measure zero
    are of measure zero and thus we can characterize the $\INV$ condition in a simpler way.
    Namely, such a mapping satisfies the $\INV$ condition for the ball $B\subset\subset\Omega$
    if and only if
    \begin{enumerate}[(i)] 
      \item its trace on $\partial B$ is in $W^{1,n-1}(\partial B, \rn)\cap L^{\infty}(\partial B, \rn)$;
    	\item $\Deg(f,B,f(x))\neq 0$ for a.e.\ $x\in B$;
    	\item $\Deg(f,B,f(x))=0$ for a.e.\ $x\in\Omega\setminus B$.
    \end{enumerate}
\end{remark}

\begin{definition}
Let $f\in W^{1,n-1}(\Omega,\rn)\cap L^{\infty}(\Omega,\rn)$. 
The distributional Jacobian \index{distributional Jacobian} of $f$ is the distribution defined by setting
$$
\Det Df( \varphi) := -\int_\Omega f_1(x) J (\varphi, f_2, ... , f_n)(x)\dx \qquad \text{ for all } \varphi \in C^{\infty}_C(\Omega),
$$
where $J ( \varphi, f_2, ... , f_n)$ is the classical Jacobian defined as the determinant of the Jacobi 
matrix $Dg$ of $g=(\varphi, f_2, \ldots, f_n)$.  
\end{definition}

We need the following lemmata from \cite{CDL}. They are stated there only for $n=3$ but it is clear from the proofs that everything works also in higher dimensions. Note that the definition used in \cite{CDL} is different from the one above, but since $f\in W^{1,n-1}(\Omega,\rn)\cap L^{\infty}(\Omega,\rn)$, we can show by the standard approximation argument that for our class of functions they coincide.

\begin{lemma}[Lemma 3.8, \cite{CDL}]\label{lem:img}
    Let $f \in W^{1,n-1}(\Omega,\rn) \cap L^{\infty}(\Omega,\rn)$, with $J_f \neq 0$ on $\Omega_{d}$, and choose $B \subset \Omega$ such that $f$ satisfies $\INV$ for $B$. Then $f(B) \subset \topi(f, B)$, and $f(\rn \backslash B) \subset \rn \setminus \topi(f, B)$.
\end{lemma}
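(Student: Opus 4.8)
\emph{Strategy.} The idea is to upgrade the almost‑everywhere statements built into the $\INV$ condition to statements valid at \emph{every} point of $\Omega_d$, using that an approximate differential with non‑vanishing Jacobian transports Lebesgue density. Write
\[
    T:=\{y\in\rn:\ \Deg(f,\partial B,y)\neq 0\},
\]
so that, by Definition~\ref{image}, $\topi(f,B)$ is the set of points of density one of $T$. By the Lebesgue density theorem $|\topi(f,B)\,\triangle\,T|=0$, hence $\topi(f,B)$ agrees with $T$ up to a null set, and $\rn\setminus\topi(f,B)$ agrees up to a null set with the set of density‑zero points of $T$. The only ingredient beyond measure‑theoretic bookkeeping is the following standard fact: if $f$ is approximately differentiable at $x_0$ with $J_f(x_0)\neq 0$, then $f$ maps every set having $x_0$ as a Lebesgue density point onto a set having $f(x_0)$ as a Lebesgue density point. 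I would prove it by invoking Federer's theorem to get a Lipschitz map $g$ with $g=f$ on a set $F$ of density one at $x_0$ and with $g$ differentiable at $x_0$, $Dg(x_0)=Df(x_0)=:L$ invertible; after the affine substitution $z\mapsto L^{-1}(g(z)-g(x_0))$ one reduces to $Dg(x_0)=\mathrm{Id}$, where a homotopy‑invariance (degree) argument on the spheres $\partial B(x_0,r)$ gives $g(B(x_0,r))\supset B(g(x_0),(1-o(1))r)$; subtracting the image $g(B(x_0,r)\setminus F)$, whose measure is at most $(\mathrm{Lip}\,g)^n|B(x_0,r)\setminus F|=o(r^n)$, yields the claim. (See \cite{MS} and \cite{CDL}.)

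\emph{First inclusion.} Fix $x\in\Omega_d\cap B$ and pick $\rho>0$ with $B(x,\rho)\subset B$. Put
\[
    G:=\{x'\in\Omega_d\cap B(x,\rho):\ f(x')\in\topi(f,B)\}.
\]
By Definition~\ref{inv}(ii) we have $|B(x,\rho)\setminus G|=0$, so $x$ is a density point of $G$, whence $f(x)$ is a density point of $f(G)$. Since $f(G)\subset\topi(f,B)$ and $|\topi(f,B)\setminus T|=0$, the point $f(x)$ is a density point of $T$, i.e.\ $f(x)\in\topi(f,B)$. Thus $f(B)\subset\topi(f,B)$.

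\emph{Second inclusion.} Take first a point $x\in\Omega_d$ in the open set $\Omega\setminus\overline B$ and choose $\rho>0$ with $B(x,\rho)\subset\Omega\setminus\overline B$. Arguing as above with
\[
    G:=\{x'\in\Omega_d\cap B(x,\rho):\ f(x')\notin\topi(f,B)\}
\]
(now using Definition~\ref{inv}(iii)) we see that $f(x)$ is a density point of a set disjoint from $T$, hence a density‑zero point of $T$, hence $f(x)\notin\topi(f,B)$. There remains the case $x\in\Omega_d\cap\partial B$, which I expect to be the genuinely delicate point: here $\Omega\setminus B$ has density $\tfrac12$ at $x$, so the transported set $G$ has density only $\tfrac12$ at $x$ and the density lemma as stated (transporting full‑density sets) no longer applies directly; one also cannot simply pass to the limit from nearby outside points, since $\topi(f,B)$ need not be open (a density‑one point of $T$ can be a limit of density‑zero points of $T$). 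Handling this requires additional structure of $\topi(f,B)$ near $f(\partial B)$, and I would follow \cite{CDL} here; this then gives $f(\rn\setminus B)\subset\rn\setminus\topi(f,B)$.

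\emph{Main obstacle.} The two nontrivial inputs, both taken from \cite{MS,CDL}, are: (a) the density‑transport property of an approximate differential with $J_f(x_0)\neq 0$, whose delicate half is the lower bound on $|f(G)|$ supplied by the degree argument above; and (b) the behaviour on $\partial B$, where one must know the topological image is, in effect, open. Everything else is the routine density bookkeeping carried out above.
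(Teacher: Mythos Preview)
The paper does not give its own proof of this lemma; it is quoted from \cite{CDL} (their Lemma~3.8, stated there for $n=3$) with the remark that the argument carries over to all $n\ge 3$. So there is nothing in the present paper to compare against beyond the original source.

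Your density-transport strategy is exactly the one used in \cite{CDL} (building on \cite{MS}), and your handling of the two ``interior'' cases $x\in\Omega_d\cap B$ and $x\in\Omega_d\cap(\Omega\setminus\overline B)$ is correct. You are also right that $x\in\Omega_d\cap\partial B$ is the genuinely delicate point.

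However, your sketch of the density-transport lemma does \emph{not} extend to density $\tfrac12$ as written, and you should be aware why. For $G$ of density~$1$ at $x_0$ your subtraction works because
\[
|g(B(x_0,r)\setminus G)|\le(\mathrm{Lip}\,g)^n\,|B(x_0,r)\setminus G|=o(r^n),
\]
but for $G$ of density~$\tfrac12$ the right-hand side is $(\mathrm{Lip}\,g)^n\cdot\tfrac12|B(0,r)|$, and Federer's Lipschitz extension gives no control on $\mathrm{Lip}\,g$; this can easily exceed $|B(g(x_0),(1-o(1))r)|$, so the argument yields nothing. Differentiability of $g$ at $x_0$ with $Dg(x_0)=\mathrm{Id}$ does not help either, since it gives $|g(z)-g(x_0)|\le(1+o(1))|z-x_0|$ but \emph{not} a local Lipschitz bound between arbitrary $z_1,z_2\in B(x_0,r)$.

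What one actually needs is: if $A$ has positive lower density at $x_0$ and $J_f(x_0)\neq 0$, then $f(A)$ has positive lower density at $f(x_0)$. Applied with $A=\Omega_d\setminus\overline B$ (density $\tfrac12$ at $x\in\partial B$), this shows that $f(A)\subset\rn\setminus T$ (up to a null set) has positive density at $f(x)$, hence $T$ does not have density~$1$ there and $f(x)\notin\topi(f,B)$. In \cite{MS} and \cite{CDL} this refined transport is obtained not by improving the Lipschitz constant but by restricting to a full-measure set $\Omega_0\subset\Omega_d$ of points that are, in addition, density points of one of the bilipschitz pieces in Federer's decomposition of an a.e.\ approximately differentiable map with nonvanishing Jacobian. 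On such a piece $f$ is injective with two-sided volume bounds, which gives the positive-density transport immediately. Since $|\Omega_d\setminus\Omega_0|=0$, replacing $\Omega_d$ by $\Omega_0$ in the definition of the geometrical image changes nothing up to null sets, and the boundary case then goes through exactly as you outlined for the interior.
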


\begin{lemma}[Lemma 4.3, \cite{CDL}]\label{lem:distributional_jacobian}
    Let $f \in W^{1,n-1}(\Omega,\rn) \cap L^{\infty}(\Omega,\rn)$. Suppose that condition $\INV$ holds for $f$, and that $J_f>0$ a.e.
    Then,
    \begin{enumerate}
        \item[{\rm (i)}]  $\Det Df \geq 0$, hence it is a Radon measure;
        \item[{\rm (ii)}] the absolutely continuous part of $\Det Df$ with respect to $\mathcal{L}^{n}$ has density $J_f$;
        \item[{\rm (iii)}] for every $a \in \Omega$ and for a.e.~$r \in (0,r_a)$,
        \begin{equation}\label{image_size_CDL}
            \Det Df (B(a, r))=|\topi(f, B(a, r))|.
        \end{equation}
    \end{enumerate}
\end{lemma}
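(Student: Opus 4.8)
This is Lemma~4.3 of \cite{CDL}; it is stated there only for $n=3$, but every ingredient it rests on — the distributional Jacobian $\Det Df$, the generalized degree $\Deg(f,B,\cdot)$ of Subsection~\ref{degree}, the $\INV$ condition, and Lemma~\ref{lem:img} — is insensitive to the dimension, so the proof carries over line by line. The spine of the argument is a \emph{localization identity}: for every $a\in\Omega$ and $\haus^1$-a.e.\ $r\in(0,r_a)$,
\begin{equation}\label{loc}
    \Det Df\bigl(B(a,r)\bigr)=\int_{\rn}\Deg\bigl(f,B(a,r),y\bigr)\dy,
\end{equation}
together with the fact that, under $\INV$ with $J_f>0$ a.e., the $BV$ function $\Deg(f,B(a,r),\cdot)$ takes only the values $0$ and $1$ for a.e.\ such $r$.

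To obtain \eqref{loc} I would slice $\Omega$ by the concentric spheres $\partial B(a,\rho)$: for a.e.\ $\rho$ the trace of $f$ lies in $W^{1,n-1}\cap L^{\infty}(\partial B(a,\rho),\rn)$ with $\cof Df$ integrable on the sphere, so $\Deg(f,B(a,\rho),\cdot)$ is a well-defined, compactly supported $BV$ function. Testing $\Det Df$ against radial cut-offs $\eta_\varepsilon=\eta_\varepsilon(|x-a|)$ that decrease from $1$ on $B(a,r-\varepsilon)$ to $0$ off $B(a,r)$, expanding the determinant $J(\eta_\varepsilon,f_2,\dots,f_n)$ along the $\eta_\varepsilon$-row, and applying the coarea formula in the radial variable, one rewrites $\Det Df(\eta_\varepsilon)=-\int_0^{r_a}\eta_\varepsilon'(\rho)\,\lambda(a,\rho)\,d\rho$, where $\lambda(a,\rho)$ is exactly the boundary term of \eqref{weakdegree}/\eqref{qqq} for the constant-divergence field $\ue(y)=(y_1,0,\dots,0)$, and hence $\lambda(a,\rho)=\int_{\rn}\Deg(f,B(a,\rho),y)\dy$. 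Letting $\varepsilon\to0$ at every Lebesgue point $r$ of $\lambda(a,\cdot)$ then gives \eqref{loc}, once we know $\Det Df$ is a measure; the same formula also identifies $r\mapsto\lambda(a,r)$ with the radial distribution function of $\Det Df$, which is the device that converts ``a.e.\ $r$'' statements into statements about the distribution $\Det Df$ itself.

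Granting the input that $\Deg(f,B(a,r),\cdot)\in\{0,1\}$, I would deduce the three assertions as follows. Since $\lambda(a,\rho)=\int_{\rn}\Deg(f,B(a,\rho),y)\dy\ge0$, the identity $\Det Df(\eta)=-\int\eta'\lambda$ yields $\Det Df(\eta)\ge0$ for every nonnegative nonincreasing radial $\eta$; the convolution of $\Det Df$ with a radial approximate identity is a nonnegative superposition of translates of such $\eta$, hence $\ge0$ pointwise, so $\Det Df\ge0$ as a distribution and therefore a Radon measure by the Riesz representation theorem — this is (i). Then \eqref{loc} reads $\Det Df(B(a,r))=\bigl|\{y:\Deg(f,B(a,r),y)\neq0\}\bigr|=|\topi(f,B(a,r))|$, the last equality because $\topi(f,B(a,r))$ is by definition the set of density-one points of $\{\Deg(f,B(a,r),\cdot)\neq0\}$ and so differs from it by a null set — this is (iii). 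For (ii), write $\Det Df=g\,\mathcal L^n+\mu_s$ with $\mu_s\ge0$ singular; by Lebesgue differentiation, for a.e.\ $a$, $g(a)=\lim_{r\to0}\Det Df(B(a,r))/|B(a,r)|=\lim_{r\to0}|\topi(f,B(a,r))|/|B(a,r)|$ using (iii). On one hand $f(B(a,r))\subset\topi(f,B(a,r))$ by Lemma~\ref{lem:img} and $\int_{B(a,r)}J_f\le|f(B(a,r))|$ by \eqref{area1}, so $g\ge J_f$ a.e.; on the other hand, at a.e.\ point $a$ of approximate differentiability with $J_f(a)>0$ the map $f$ is near $a$ a small perturbation of the affine bijection $x\mapsto f(a)+Df(a)(x-a)$, so $\INV$ pins $\topi(f,B(a,r))$ to $f(B(a,r))$ up to an error $o(r^n)$, giving $|\topi(f,B(a,r))|=\omega_n r^n J_f(a)+o(r^n)$ and thus $g\le J_f$ a.e.

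The main obstacle is the input granted above: that $\Deg(f,B(a,r),\cdot)\in\{0,1\}$ for a.e.\ $r$. Both the nonnegativity and the bound $\le1$ are precisely where the $\INV$ condition is genuinely used, and establishing them needs the full apparatus of \cite{CDL} — the monotonicity/nestedness of the topological images of concentric balls, the a.e.\ disjointness $f(\Omega\setminus B)\cap\topi(f,B)=\emptyset$ from Lemma~\ref{lem:img}, and an approximation of the (generally discontinuous) spherical traces by continuous maps whose Brouwer degree is integer-valued. A second delicate point is the ``no cavitation at a.e.\ point'' estimate $|\topi(f,B(a,r))|-|f(B(a,r))|=o(r^n)$ needed in (ii); it is there that the hypothesis $J_f>0$ a.e.\ (rather than merely $J_f\ge0$) is essential.
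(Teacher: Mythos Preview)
The paper does not prove this lemma; it merely cites \cite{CDL} with the remark that the $n=3$ arguments there extend verbatim to general $n\ge3$. Your opening sentence says exactly this, so you are aligned with the paper's ``proof''. The remainder of your proposal goes further than the paper by sketching the actual \cite{CDL} argument; that sketch is accurate in its broad strokes --- the localization identity \eqref{loc}, the $\{0,1\}$-valuedness of $\Deg(f,B(a,r),\cdot)$ under $\INV$, and Lebesgue differentiation combined with Lemma~\ref{lem:img} for (ii) --- and your identification of the two genuinely delicate points (the degree bound and the $o(r^n)$ estimate) is on target.
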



\subsection{Minimizers of the tangential Dirichlet integral}\label{s:dirichlet}
In our main proof, we have a sphere $\partial B$ in $\rn$ and on this sphere we have a small $(n-2)$-dimensional circle which is a boundary of an open spherical cap $S\subset \partial B$. Our map $f$ is in $W^{1,n-1}$, therefore we can choose the sets so that $f$ is continuous on the $(n-2)$-dimensional circle $\overline S\setminus S$. Our mapping $f$ can have a big oscillation on $S$ so we need to replace it with a reasonable mapping. We do this by choosing a minimizer of the tangential Dirichlet energy over this cap $S$ which has the same value on the circle $\overline S\setminus S$. 
In fact, we need this even for more general shapes than spheres and circles.

We say that a relatively open set $S\subset \partial B$ satisfies the \textit{exterior ball condition}
if for each $z\in \overline S\setminus S$ there exists a ball $B(z',r)$ with
$z'\in\partial B$ such that $z\in \partial B(z',r)$ and $B(z',r)\cap S=\varnothing$. The following Theorem was shown in \cite[Theorem 2.10]{DHM}: 

\begin{thm}\label{minimizers} 
    Let $B\subset\rn$ be a ball. Let $S\subset\partial B$ be a connected relatively open subset of $\partial B$. 
    Let $T$ be the relative boundary 
    of $S$ with respect to $\partial B$. Suppose that $\diam S<\frac{r}{4n}$ and that $S$ satisfies
    the exterior ball condition. Let 
    $f=(f^1,\dots,f^n)\in W^{1,n-1}(\partial B,\rn)$ be continuous on $T$. Then there exists 
    a unique function $h=(h^1,\dots,h^n)\in C(\overline S,\rn)\cap W^{1,n-1}(S,\rn)$ such that each coordinate
    $h^i$ minimizes $\int_{S}|D_{\tau}u|^{n-1}\,d\haus^{n-1}$ among all functions $u\in f^i+W_0^{1,n-1}(S,\rn)$.
    We have $h=f$ on $T$, 
    the function $h$ satisfies the estimate
    \begin{equation}\label{osc}
        \diam h(\overline S)\leq \sqrt n\diam f(T) 
    \end{equation}
    and we have $\mathcal{L}^n(h(S))=0$. 
    Moreover, let $f_m$ be continuous and converge to $f$ uniformly on $T$, then $h_m$ converge to $h$ uniformly on $S$, where $h_m$ are minimizers corresponding to boundary values $f_m$. 
\end{thm}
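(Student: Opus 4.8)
The plan is to argue coordinatewise, since all the claims decouple over the coordinates except for the final assembly of the oscillation estimate. Fix $i\in\{1,\dots,n\}$ and write $p:=n-1\ge 2$. The functional $u\mapsto\int_S|D_\tau u|^{p}\,d\haus^{n-1}$ is convex and (sequentially) weakly lower semicontinuous on $W^{1,p}(S)$, and strictly convex as a function of $D_\tau u$ because $t\mapsto|t|^{p}$ is strictly convex for $p>1$. Since $f^i\in W^{1,p}(\partial B)$ restricts to an admissible competitor on $S$, and the Poincar\'e inequality holds on the bounded set $S$ (so the functional is coercive on $f^i+W^{1,p}_0(S)$), the direct method yields a minimizer $h^i$, and strict convexity together with the common trace forces uniqueness. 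Each $h^i$ is then a weak solution on $S$ of the $p$-Laplace equation $\div_\tau(|D_\tau h^i|^{p-2}D_\tau h^i)=0$, understood intrinsically on $\partial B$, with $h^i-f^i\in W^{1,p}_0(S)$.

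Next I would establish $h=(h^1,\dots,h^n)\in C(\overline S,\rn)$ with $h=f$ on $T$. Interior regularity for the $p$-Laplacian (Uhlenbeck, DiBenedetto, Tolksdorf) gives $h^i\in C^{1,\alpha}_{\mathrm{loc}}(S)$; in particular $h\in C(S,\rn)$ and $h$ is locally Lipschitz on $S$. For continuity up to $T$ I would use the exterior ball condition: for $z\in T$ pick the exterior ball $B(z',\rho)\subset\partial B$ with $z\in\partial B(z',\rho)$ and $B(z',\rho)\cap S=\varnothing$, and build a barrier from the radial $p$-harmonic function on $\partial B\setminus\{z'\}$ (logarithmic here, since $p$ equals the dimension $n-1$ of $\partial B$). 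Because $\diam S<\tfrac{r}{4n}$, the cap $\overline S$ lies in a single smooth chart in which $\partial B$ is bi-Lipschitz to a flat domain with controlled constants and the equation becomes a quasilinear equation of $p$-Laplacian type with smooth coefficients, so the standard barrier/comparison argument applies and yields, using the continuity of $f^i$ on $T$, that $h^i$ extends continuously to $\overline S$ with boundary value $f^i$. Assembling the coordinates and using compactness of $T$ gives $h\in C(\overline S,\rn)$ and $h=f$ on $T$. I expect this boundary-continuity step --- the barrier construction on the sphere in the borderline exponent --- to be the main technical obstacle.

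For the oscillation estimate I would invoke the weak maximum principle for $p$-harmonic functions (valid intrinsically on $\partial B$): testing the equation with $(h^i-\sup_T f^i)^+\in W^{1,p}_0(S)$ and with $(\inf_T f^i-h^i)^+$ shows $\inf_T f^i\le h^i\le\sup_T f^i$ on $\overline S$, hence $\osc_{\overline S}h^i\le\osc_T f^i\le\diam f(T)$ for each $i$. Then for $x,y\in\overline S$ we get $|h(x)-h(y)|^2=\sum_i|h^i(x)-h^i(y)|^2\le\sum_i(\osc_T f^i)^2\le n\,(\diam f(T))^2$, which is \eqref{osc}. The identity $\mathcal{L}^n(h(S))=0$ then follows from the local Lipschitz regularity of $h$ on $S$: writing $S=\bigcup_j K_j$ with $K_j:=\{x\in\overline S:\operatorname{dist}(x,T)\ge 1/j\}$ compact in $S$, the map $h|_{K_j}$ is Lipschitz, so $h(K_j)$ is a Lipschitz image of a bounded $(n-1)$-dimensional set and hence $\haus^{n-1}(h(K_j))<\infty$, which forces $\mathcal{L}^n(h(K_j))=0$; a countable union of $\mathcal{L}^n$-null sets is $\mathcal{L}^n$-null.

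Finally, for the stability statement, set $\ep_m:=\|f_m-f\|_{L^\infty(T)}\to 0$. The function $h^i+\ep_m$ is again $p$-harmonic on $S$, continuous on $\overline S$, with boundary value $f^i+\ep_m\ge f^i_m$ on $T$; since $h_m^i$ is the $p$-harmonic function with boundary value $f_m^i$, the comparison principle for the $p$-Laplacian (test the two weak formulations with $(h_m^i-h^i-\ep_m)^+\in W^{1,p}_0(S)$ and use monotonicity of $\xi\mapsto|\xi|^{p-2}\xi$) gives $h_m^i\le h^i+\ep_m$ on $\overline S$; symmetrically $h_m^i\ge h^i-\ep_m$. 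Hence $\|h_m-h\|_{L^\infty(\overline S,\rn)}\le\sqrt n\,\ep_m\to 0$, i.e.\ $h_m\to h$ uniformly on $S$. The continuity up to $\overline S$ of the $h^i$ and $h_m^i$, established above, is what guarantees that the Sobolev trace conditions and the pointwise boundary inequalities agree, so that the comparison principle applies in the form used.
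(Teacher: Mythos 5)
Your proof is correct in its conclusions, but it takes a genuinely different technical route from the paper, most visibly in the only part the paper actually proves here: the assertion $\mathcal{L}^n(h(S))=0$. The paper first applies the Sobolev area formula~\eqref{area3} (which holds only up to an exceptional set $N\subset S$ with $\haus^{n-1}(N)=0$) to get $\haus^{n-1}(h(S\setminus N))<\infty$, and then disposes of the residual null set $N$ by establishing that $h$ is \emph{pseudomonotone} (via~\eqref{osc} applied to subcaps and uniqueness of the minimizer) and invoking the Mal\'y--Martio theorem \cite{MM} on the Lusin $\N$ condition for pseudomonotone $W^{1,n-1}$ maps of $(n-1)$-dimensional domains. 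You instead exhaust $S$ by compacts $K_j$ and use interior $C^{1,\alpha}_{\loc}$ regularity of $(n-1)$-harmonic functions (borderline exponent $p=\dim\partial B$, which is covered by DiBenedetto/Tolksdorf/Uhlenbeck) to get $h|_{K_j}$ Lipschitz, whence $\haus^{n-1}(h(K_j))<\infty$ and $\mathcal{L}^n(h(K_j))=0$. Both routes are valid; yours is more PDE-intensive but shorter, while the paper's argument is more measure-theoretic and sidesteps any smoothness of the minimizer, using only pseudomonotonicity (a soft consequence of uniqueness and the oscillation bound) plus a quotable Lusin-type theorem. For the remaining claims (existence, uniqueness, boundary continuity, \eqref{osc}, and uniform stability) the paper simply cites \cite[Theorem 2.10]{DHM}, so there is no in-text proof to compare against, but your sketches via direct method, barriers under the exterior ball condition, maximum principle, and comparison principle are the standard arguments one would expect, and you correctly flag the borderline barrier construction on the curved manifold as the delicate step.
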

\begin{proof}
Everything except $\mathcal{L}^n(h(S))=0$ was shown already in \cite[Theorem 2.10]{DHM}. 

It is standard that the change of variable formula holds for Sobolev mappings up to a null set (see \eqref{area3} and the paragraph after) and hence using $h\in W^{1,n-1}$ there is $N\subset S$ with $\haus^{n-1}(N)=0$ such that
$$
\haus^{n-1}\bigl(h(S\setminus N)\bigr)<\infty\text{ and hence }\mathcal{L}^n\bigl(h(S\setminus N)\bigr)=0. 
$$

We claim that $h$ is pseudomonotone, i.e.\ there is $C>0$ such that for each spherical cap $A\subset S$ we have 
$$
\diam h(\overline A)\leq C\diam f(\partial A)
$$ 
(here $\partial A$ is the relative $(n-2)$-dimensional boundary with respect to $\partial B$). This fact follows from \eqref{osc}, i.e.,\ we consider the corresponding minimizer on $A$ with respect to boundary data $h|_{\partial A}$. By the uniqueness of this minimizer we obtain that $h|_{A}$ is this minimizer and \eqref{osc} holds for $\overline{A}$ and $\partial A$ (instead of $\overline{S}$ and $T$) gives us the pseudomonotonicity. 
Let us now consider a mapping $g:=P\circ h\colon S\to\er^{n-1}$, where $P$ is the projection to the hyperplane $\{x_1=0\}$. 
It is easy to see that $g\in W^{1,n-1}$ and that $g$ is continuous and pseudomonotone. By the result of Mal\'y and Martio \cite[Theorem A]{MM} and $\haus^{n-1}(N)=0$ we obtain that 
$$
\haus^{n-1}\bigl(g(N)\bigr)=0\text{ and hence }\mathcal{L}^n\bigl(h(N)\bigr)=0. 
$$
\end{proof}


\section{Proof of Theorem~\ref{main}: $\INV$ condition}\label{three}

\begin{proof}[Proof of Theorem \ref{main}: $\INV$ condition]
Assume on the contrary that $f$ does not satisfy the $\INV$ condition. 
Then we can find a center $c$ such that for $\haus^1$-positively many radii $r>0$ our $f$ maps either something from $B(c,r)$ outside of $\topi(f,B(c,r))$ or something outside of 
$B(c,r)$ inside of $\topi(f,B(c,r))$. We treat the first case in detail and at the end we briefly explain the analogous second case.  

{\underline{Step 1. Outline of the proof}:} 
We assume that $f$ does not satisfy $\INV$ and hence there is $c\in\Omega$ such that the set 
\begin{equation}\label{covering}
    \left\{r:\ B(c,r)\subset\Omega,\ \exists V_r\subset B(c,r)\text{ with }|V_r|>0\text{ and }\Deg(f,B(c,r),f(x))=0\text{ for all }x\in V_r\right\}
\end{equation}
has positive (one-dimensional) measure. 

Let us now briefly explain the idea of the proof. We know that $f_m$ converge to $f$ weakly in $W^{1,n-1}$ and thus up to a subsequence strongly in $L^{n-1}$ and a.e.
We can thus imagine that $f_m$ is really close to $f$ both on some fixed $\partial B(c,r)$ and on $V_r$. 
The situation is illustrated in Fig.~\ref{fig_bubble}.

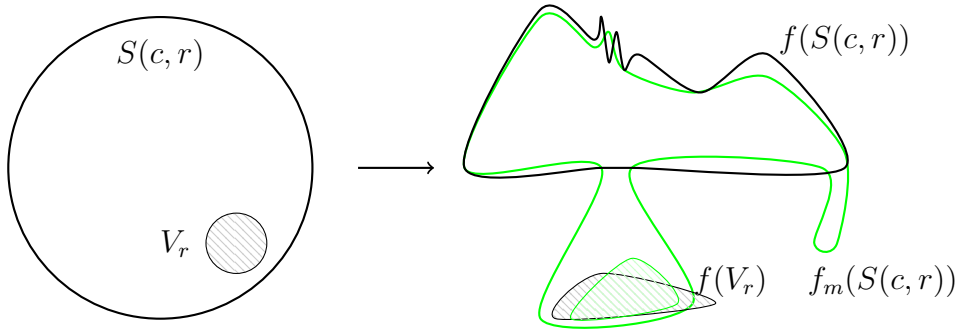
\begin{figure} [h t p]
\phantom{a}
\begin{tikzpicture}
    \draw[thick] (-4,0) circle(2);
    \node at (-4,1.5) {$S(c,r)$};
    \draw [black,pattern=north west lines, pattern color= {rgb:black,1;white,4}]  (-3,-1) circle(0.4);
    \node at (-3.8,-1) {$V_r$};

    \draw[->, thick] (-1.4,0)--(-0.4,0);

    \draw [thick, green] plot [smooth cycle] coordinates {(0,0) (1,2) (1.7,1.6) (1.9,1.8) (2.1,1.3) (3,1) (4,1.2) (5,0.2) (4.9,-1) (4.6, -1) (4.7,0) (2.2,0) (3,-1.8) (1,-2) (1.8,0)};
    \draw [thick, black] plot [smooth cycle] coordinates {(0,0) (1,2.1) (1.7,1.7) (1.8,2) (1.9, 1.4) (2, 1.8) (2.1,1.3) (2.3,1.5) (3.1,1) (4,1.5) (5,0) (2.2,0)  (1.8,0)};
    \draw [black, pattern=north west lines,  pattern color= {rgb:black,1;white,4}] plot [smooth cycle] coordinates { (3.3,-1.8) (1.8,-1.4) (1.2,-2)};
    \draw [green, pattern=north west lines,  pattern color= {rgb:green,1;white,4}] plot [smooth cycle] coordinates { (2.8,-1.8) (2.2,-1.2) (1.4,-2)};

    \node at (5.5,-1.5) {$f_m(S(c,r))$};
    \node at (3.5,-1.5) {$f(V_r)$};
    \node at (5,1.7) {$f(S(c,r))$};
\end{tikzpicture}
\caption{Behaviour of mappings $f$ (in black) and $f_m$ (in green) on $S(c,r)$ and $V_r$.}\label{fig_bubble}
\end{figure}

That means $f(V_r)$ is outside of 
$\topi(f,B(c,r))$ but $f_m(V_r)$ lies inside $f_m(\partial B(c,r))$ since $f_m$ is a homeomorphism. It follows that $f_m(\partial B(c,r))$ is close to 
$f(\partial B(c,r))$ in most of the places but it makes a ``bubble'' (or several bubbles) around $f_m(V_r)$ which is really close to $f(V_r)$. 

Firstly, we define those bubbles and then we show that the number of bubbles that contain a big part of $f_m(V_r)$ is uniformly bounded and hence in one of them we have a big portion of the volume of $f_m(V_r)$ (and that $|f_m(V_r)|\geq C$ using \eqref{varphi} and \eqref{key}). Since $f_m(V_r)$ is quite big (in one of the big bubbles) we obtain that the boundary of the bubble has big $\haus^{n-1}$ measure. However, this boundary is essentially image of some very small set (as small as we wish for $m$ big enough) on $\partial B$ under $f_m$ and using \eqref{estimate} we obtain that the integral of $|\cof Df_m|$ over this small set is big. 
This contradicts the equiintegrability of $|\cof Df_m|$ which results in $\sup_m \int_{\partial B(c,r)}A(|\cof Df_m|)\,d\haus^{n-1}<\infty$.

{\underline{Step 2. Replacement of $f$ on $\partial B(c,r)$ with continuous $g$ that has similar degree}:} 

We need to apply plenty of techniques and results developed in \cite{DHM}. For the convenience of the reader, we include most of the details in the current proof. 

We fix $B(c,R)\subset\subset\Omega$. 
Since $f_m$ converge weakly in $W^{1,n-1}$, which is compactly embedded into $L^{(n-1)^*-\epsilon}$, and so into $L^{n-1}$,
we obtain that $f_m$ converge to $f$ in $L^{n-1}(B(c,R))$.
Up to a subsequence we can thus assume that $f_m\to f$ pointwise a.e.\ and by Lemma \ref{jfnonzero} we obtain $J_f\neq 0$ a.e.
We fix $r\in(0,R)$ (and pass to a subsequence if necessary, see e.g.~\cite[Lemma 2.9]{MS}) such that 
$$
    f_m\to f\text{ weakly in }W^{1,n-1}(\partial B(c,r),\rn)\text{ and }\haus^{n-1}\text{-a.e.~on }\partial B(c,r)
$$  
and such that there exists a constant $C_2$ so that
\begin{equation}\label{prvniodhad}
    \int_{\partial B(c,r)}(|D_{\tau} f|^{n-1}+|D_{\tau} f_m|^{n-1})\,d\haus^{n-1} < C_2\text{ for all } m\in\en.
\end{equation}
Moreover, using Theorem \ref{chm} we can assume that all $f_m\in W^{1,n-1}(\partial B(c,r),\rn)$ satisfy the $\N$ condition on $\partial B(c,r)$. Analogously to the proof of \cite[Lemma 2.9]{MS} we use the Fatou Lemma
and \eqref{key} to deduce
$$
\int_0^R\liminf_{m\to\infty}\int_{\partial B(c,r)}A(|\cof Df_m|)\,d\haus^{n-1} \,d\varrho \leq \liminf_{m\to\infty}\int_0^R\int_{\partial B(c,r)}A(|\cof Df_m|)\,d\haus^{n-1} \,d\varrho <\infty.
$$
Choosing further $r$ so that the $\liminf$ on the lefthand side is finite and thus (passing again to a subsequence) we have
\begin{equation}\label{druhyodhad}
    \int_{\partial B(c,r)}A\bigl(|\cof Df_m|\bigr)\,d\haus^{n-1} < C_2\text{ for all } m\in\en.
\end{equation}
We set $B:=B(c,r)$ and choose $\ep>0$ small enough with the exact value to be specified later. 
Find 
$\rho\in (0,\,\min\{\frac{1}{16n}r,\frac{\epsilon}{2}\})$ 
such that for each $z\in\partial B$ we have
\begin{equation}\label{ahoj}
    \int_{\partial B\cap B(z,2\rho)}|D_{\tau} f|^{n-1}\,d\haus^{n-1}
    < \ep^{n-1}
\end{equation}
(we can do that since the integral over the whole $\partial B$ is finite). 
For each fixed $z\in \partial B$ we find $\rho_z\in (\rho,2\rho)$ such that 
\begin{equation}\label{howrho}
    \rho\int_{\partial B\cap \partial B(z,\rho_z)}|D_{\tau} f|^{n-1}\,d\hauso
    <\ep^{n-1},
\end{equation} 
which is possible because the length of $(\rho,2\rho)$ is $\rho$, combined with \eqref{ahoj}. 
Moreover, we can also choose $\rho_z$ such that $f_m\to f$ occurs $\hauso$-a.e.~on $\partial B\cap \partial B(z,\rho_z)$ and that 
$$
    \liminf_{m\to\infty}\|f_m\|_{W^{1,n-1}(\partial B\cap \partial B(z,\rho_z),\rn)}<\infty. 
$$
It follows that up to a subsequence (depending on $z$ and $\rho_z$, see e.g.~\cite[Lemma 2.9]{MS})
\begin{equation}\label{funiformly0}
    f_m\to f\text{ weakly in }W^{1,n-1}\text{ and also uniformly on }\partial B\cap \partial B(z,\rho_z). 
\end{equation}
Note that on the $(n-2)$-dimensional space $\partial B\cap \partial B(z,\rho_z)$ we have embedding into H\"older functions $W^{1,n-1}\hookrightarrow C^{0,1-\frac{n-2}{n-1}}$, thus $f$ is continuous there 
and we have the estimate 
\begin{equation}\label{morrey}
    \diam f(\partial B\cap \partial B(z,\rho_z))
    \leq C (\rho_z)^{1-\frac{n-2}{n-1}}\left(\int_{\partial B\cap \partial B(z,\rho_z)}|D_{\tau} f|^{n-1}\,d\hauso\right)^{\frac{1}{n-1}}
    \le C_3 \ep.
\end{equation}
Using the Vitali type covering, we find $B_j=B(z_j,\rho_j)$ such that $\rho_j=\rho_{z_j}$,
$$
    \partial B\subset \bigcup_jB(z_j,\rho_j)
$$
and the balls $B(z_j,\frac15\rho_j)$ are pairwise disjoint. Here $j=1,\dots,j_{\max}$.
Furthermore, the balls in the Vitali covering theorem are chosen inductively so we can also assume using \eqref{funiformly0} 
that for a subsequence (chosen in a diagonal argument) 
\begin{equation}\label{funiformly}
    f_m\to f\text{ weakly in }W^{1,n-1}\text{ and uniformly on } 
    \partial B\cap \partial B(z_j,\rho_j)\text{ for each }j.
\end{equation} 
Given $j$, denote 
$$
    S_j=\partial B\cap B_j\setminus \bigcup_{l<j}\overline{B_l}.
$$
Note that $S_j$ satisfies the exterior ball condition of Subsection \ref{s:dirichlet}. 
Let $T_j$ denote the relative boundary of $S_j$ with respect to $\partial B$.

For each $j$ we define $h_j$ on $S_j$ such that $h_j$ minimizes 
coordinate-wise 
the 
tangential $(n-1)$-Dirichlet integral among functions with boundary data $f$ on $T_j$ (see Theorem \ref{minimizers}). We define $h_j=f$ on $\partial B\setminus S_j$. 
Also we define the function $g$ on $\partial B$ as $g=h_j$ on 
each $\overline {S_j}$.
Set (see Fig. \ref{fig_F_j})
\begin{equation*}
\begin{aligned}
    F&=\{y\in \Omega'\colon \Deg(f,B,y)\ne \deg(g,B,y) \},\\
    F_j&=\{y\in  \Omega'\colon \Deg(f,B,y)\ne \Deg(h_j,B,y) \}.
\end{aligned}
\end{equation*}
Let us recall that by Theorem \ref{minimizers} we have $\mathcal{L}^n(g(S_j))=0$ and hence Remark \ref{Deg=deg} gives us $\deg g = \Deg g$.

\begin{figure}[h t p]
\begin{tikzpicture}[scale=1.5]
    \draw [thick, black,  fill= gray] plot [smooth cycle] coordinates {(0,0) (1,2.1) (1.7,1.7) (1.8,2) (1.9, 1.4) (2, 1.8) (2.1,1.3) (2.3,1.5) (3.1,1) (4,1.5) (5,0) (2.2,0)  (1.8,0)};
    \draw [white, fill=white] {(-0.2, -0.2)--(-0.2,2.2)--(4.6, 2.2)--	(4.6,0.9)-- (5,0)--(5,-0.2)--(-0.2, -0.2)};
    \draw [thick, black] plot [smooth cycle] coordinates {(0,0) (1,2.1) (1.7,1.7) (1.8,2) (1.9, 1.4) (2, 1.8) (2.1,1.3) (2.3,1.5) (3.1,1) (4,1.5) (5,0) (2.2,0)  (1.8,0)};
    \draw [dashed, thick, red] {(0,0)node {$\times$}--(0.4, 1.1) node {$\times$} --(1,2.1)node {$\times$}-- (1.7,1.7)node {$\times$}--  (2.3,1.5)node {$\times$}-- (3.1,1)node {$\times$} --(4,1.5)node {$\times$}--(4.6,0.9)node {$\times$}-- (5,0)node {$\times$} --(4,-0.1)node {$\times$}--(2.5,0)node{$\times$}--  (1.8,0)node {$\times$}--(0,0)};

    \node at (0.6,0.3) {$g(\partial B)$};
    \node at (2.3,2.1) {$f(\partial B)$};
    \node at (5.2,0.3) {$F_j$};
\end{tikzpicture}
\caption{Behaviour of mappings $f$ (in black) and $g$ (red) on $\partial B$ in 2D representation. $T_j$ corresponds to points on $f(\partial B)$ (of course in $\rn$ they are $(n-2)$-dimensional),
$g$ is represented by dashed lines connecting these points (of course these are minimizers of $(n-1)$-energy in higher dimensions and not lines) and the gray set $F_j$ is created ``between'' $g(S_j)$ and $f(S_j)$.}\label{fig_F_j}
\end{figure}
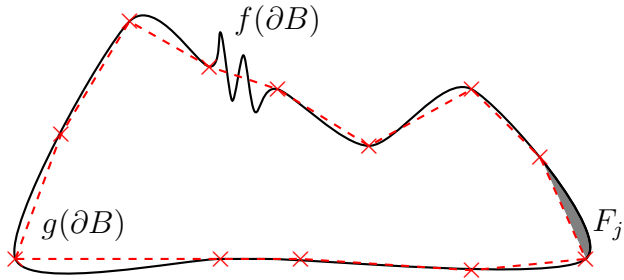

It is not difficult to find out that 
$$
    y\in \bigcup_jF_j \quad \text{for a.e.\ }y\in F
$$
(this can be viewed e.g.~by using \eqref{weakdegree}). 
Now, by \eqref{odhad}, \eqref{ahoj}, and the minimizing property
$\int_{S_j}|D_{\tau}h_j|^{n-1}\,d\haus^{n-1}\leq C\int_{S_j}|D_{\tau}f|^{n-1}\,d\haus^{n-1}$ we have
\begin{equation}\label{degg}
\begin{aligned}
    \sum_{j}|F_j|&\le C
    \sum_{j}\Big(\int_{S_j}(|D_{\tau} f|^{n-1}+|D_{\tau} h_j|^{n-1})\,d\haus^{n-1}\Big)^{\frac{n}{n-1}}
    \\&
    \le C\sum_{j}\Big(\int_{S_j}|D_{\tau} f|^{n-1}\,d\haus^{n-1}\Big)^{\frac{n}{n-1}}\\
    &\le C\ep
    \sum_{j}\int_{S_j}|D_{\tau} f|^{n-1}\,d\haus^{n-1}
    \le CC_2\ep.
\end{aligned}
\end{equation}
It follows that $f$ and $g$ have the same degree up to a very small set. It is more convenient for us to work with $g$ since for this continuous mapping on $\partial B$ we can use the classical degree $\deg$ and not $\Deg$ as for $f$. 

{\underline{Step 3. Replacement of $f_m$ on $\partial B(c,r)$ with continuous $g_m$ that is close to $g$}:} 

From Theorem \ref{minimizers} we  know that $\mathcal{L}^n(h_j(S_j))=0$ for each $j$ and thus $|g(\partial B)|=0$. 
It follows that we can find a compact set $H\subset\Omega'\setminus g(\partial B)$ such that  
\begin{equation}\label{defK}
    \Omega'\setminus H<\Phi(\tfrac{1}{10}|V_r|),
\end{equation}
where $\Phi$ comes from Lemma \ref{l:reverse}. 
For each $m\in\en$ and $j\in\{1,\dots,j_{\max}\}$ 
let $g_{m,j}$ be defined in $S_j$ as the 
coordinate-wise minimizer of the 
$(n-1)$-Dirichlet integral among functions with boundary data $f_m$ on $T_j$.
We define $g_{m,j}$ as
$f_m$ on $\partial B\setminus S_j$. We also define
$g_m$ on $\partial B$ as $g_{m,j}$ on each $\overline{S_j}$.

Since $f_m\to f=g$ uniformly on $T_j$ by \eqref{funiformly}, we have $g_m\to g$ uniformly on $\partial B$ using Theorem \ref{minimizers}.
Hence we find $m\in\en$ such that $g_m(\partial B)$ 
does not intersect 
$H$ and 
\begin{equation}\label{W'}
    \deg (g_m,B,\cdot)=\deg (g,B,\cdot)\quad\text{in }H.
\end{equation}

Also, we require
\begin{equation}\label{new}
    |f_m-f|=|g_m-g|< \ep\quad\text{on all }T_j.
\end{equation}
With the help of \eqref{osc} and \eqref{morrey} (which holds also for $T_j$) this implies that
$$
    |g_m-g|<C\epsilon\text{ on }\partial B. 
$$
Similarly as in Fig. \ref{fig_F_j} (but using $f_m$ instead of $f$), we define 
\eqn{defEj}
$$
\begin{aligned}
    E  &=\{y\in \Omega'\colon \deg(f_m,B,y)=1\ne \deg(g_m,B,y) \},\\
    E_j&=\{y\in  \Omega'\colon \deg(f_m,B,y)=1\ne \deg(g_{m,j},B,y) \},
\end{aligned}
$$
see Fig. \ref{fig_E_j}.

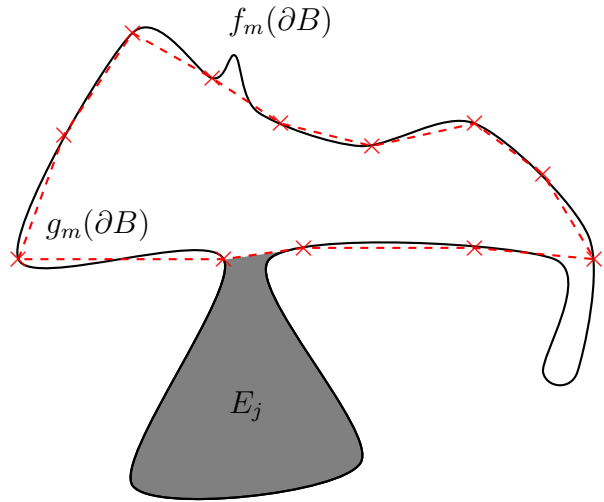
\begin{figure}[h t p]
\begin{tikzpicture}[scale=1.5]
    \draw [thick, black,  fill= gray] plot [smooth cycle] coordinates {(0,0) (1,2) (1.7,1.6) (1.9,1.8) (2.1,1.3) (3,1) (4,1.2) (5,0.2) (4.9,-1) (4.6, -1) (4.7,0) (2.2,0) (3,-1.8) (1,-2) (1.8,0)};
    \draw [white, fill=white] {(-0.2, -0.2)--(-0.2,2.2)--(5.6, 2.2)--	 (5.6,-2)--(4,-2)--(2.5,0.1)--(1.8,0)--(-0.2, -0.2)};
    \draw [thick, black] plot [smooth cycle] coordinates {(0,0) (1,2) (1.7,1.6) (1.9,1.8) (2.1,1.3) (3,1) (4,1.2) (5,0.2) (4.9,-1) (4.6, -1) (4.7,0) (2.2,0) (3,-1.8) (1,-2) (1.8,0)};
    \draw [dashed, thick, red] {(0,0)node {$\times$}--(0.4, 1.1) node {$\times$} --(1,2)node {$\times$}-- (1.7,1.6)node {$\times$}--  (2.3,1.2)node {$\times$}-- (3.1,1)node {$\times$} --(4,1.2)node {$\times$}--(4.6,0.75)node {$\times$}-- (5.05,0)node {$\times$} --(4,0.1)node {$\times$}--(2.5,0.1)node{$\times$}--  (1.8,0)node {$\times$}--(0,0)};

    \node at (0.7,0.3) {$g_m(\partial B)$};
    \node at (2.3,2.1) {$f_m(\partial B)$};
    \node at (2,-1.3) {$E_j$};
\end{tikzpicture}
\caption{Behaviour of mappings $f_m$ (in black) and $g_m$ (red) on $\partial B$ in 2D representation.}\label{fig_E_j}
\end{figure}

Let us note that these sets $E_j$ are exactly those bubbles discussed in the first step (see Fig. \ref{fig_bubble}). 
Then
$$
    y\in \bigcup_jE_j \quad \text{for a.e.\ }y\in E.
$$
Using \eqref{odhad} and the minimizing property
$\int_{S_j}|D_{\tau} g_{m,j}|^{n-1}\,d\haus^{n-1}\leq \int_{S_j}|D_{\tau} f_m|^{n-1}\,d\haus^{n-1}$, 
we obtain
\begin{equation}\label{sizeej}
    |E_j|^{1-\frac1n}\le C\int_{S_j}|D_{\tau} f_m|^{n-1}\,
    d\haus^{n-1}.
\end{equation}

{\underline{Step 4. Not that many big bubbles where $f_m$ and $g_m$ have different degree}:} 

Choose $a>0$ and set 
\begin{equation*}
\begin{aligned}
    J^+=\{j\colon \int_{S_j}|D_{\tau} f_m|^{n-1}\,d\haus^{n-1}>a\}, \\
    J^-=\{j\colon \int_{S_j}|D_{\tau} f_m|^{n-1}\,d\haus^{n-1}\le a\}. 
\end{aligned}
\end{equation*}
Note that \eqref{sizeej} implies that $|E_j|$ are small for $j\in J^{-}$. 
Hence using \eqref{prvniodhad}
\begin{equation}\label{rr}
\begin{aligned}
    \sum_{j\in J^-}|E_j|&\le C\sum_{j\in J^-}\Big(\int_{S_j}|D_{\tau} f_m|^{n-1}\,d\haus^{n-1}\Big)^{\frac{n}{n-1}}
    \\&\le Ca^{\frac1{n-1}}\sum_{j\in J^-}\int_{S_j}|D_{\tau} f_m|^{n-1}\,d\haus^{n-1}
    \\&
    \le Ca^{\frac1{n-1}}\int_{\partial B}|D_{\tau} f_m|^{n-1}\,d\haus^{n-1}\le C_4 a^{\frac1{n-1}},
\end{aligned}
\end{equation}
where $C_4=CC_2$. We fix $a$ such that 
\begin{equation}\label{rrr}
    C_4a^{\frac1{n-1}}\le \Phi\left(\tfrac{1}{10}|V_r|\right).
\end{equation}
We set 
$$
    W=f_m^{-1}\Big(\bigcup_{j\in J^-}E_j\Big).
$$
and using Lemma \ref{l:reverse}, 
\eqref{rr} and \eqref{rrr} we obtain 
$$
\Phi(|W|)\leq |f_m(W)|= \Bigl|\bigcup_{j\in J^-} E_j\Bigr|\leq C_4 a^{\frac{1}{n-1}}\leq \Phi\left(\tfrac{1}{10} |V_r|\right).
$$
From the monotonicity of $\Phi$ we get
\begin{equation}\label{Ej}
    |W|\leq\tfrac{1}{10}|V_r|.
\end{equation}

From \eqref{prvniodhad} we have
\begin{equation}\label{defM}
    \# J^+\le M:=\frac{C_2}{a}.
\end{equation}
It follows that we have only boundedly many $E_j$, $j\in J^+$, where the size of the bubble $|E_j|$ could be big. This bound depends on $|V_r|$ and $\Phi$ (and hence $\varphi$) and $C_2$ 
 but it does not depend on $m$ nor on $\epsilon$. 
We could have plenty of other small bubbles $E_j$, $j\in J^-$, but \eqref{Ej} implies that the union of their preimages is really small. 

{\underline{Step 5. Big part of $f_m(V_r)$ lies in big bubbles}:} 

Set 
$$
    Y=\{x\in V_r\setminus W\colon \deg(g_m,B,f_m(x))= 0\}.
$$
With the help of \eqref{W'} we have 
\begin{equation*}
\begin{aligned}
    V_r\setminus Y
    \subset &W\cup \{x\in V_r\colon\Deg(f,B,f(x))\neq 0\}\cup \\
    &\cup\{x\in V_r\colon \deg(g_m,B,f_m(x))\neq 0,\ \Deg(f,B,f(x))= 0\}\\
    \subset &W\cup \{x\in V_r\colon\Deg(f,B,f(x))\neq 0\}\cup\\ 
    &\cup\{x\in V_r\colon\deg(g,B,f(x))\neq \Deg(f,B,f(x))\} \cup\\
    &\cup\{x\in V_r\colon\deg(g,B,f_m(x))\neq \deg(g,B,f(x))\}\cup \{x\in V_r\colon f_m(x)\notin H\}.\\
\end{aligned}
\end{equation*}
From \eqref{Ej} and \eqref{covering} we obtain
$$
    \bigl|W\cup \{x\in V_r\colon\Deg(f,B,f(x))\neq 0\}\bigr|\leq \frac{1}{10}|V_r|
$$
as the second set is empty. 
Using \eqref{degg} we have 
$$
    \bigl|\bigl\{y\in\Omega'\colon\deg(g,B,y)\neq \Deg(f,B,y) \bigr\}\bigr|=|F|\leq C C_2\epsilon.
$$

Using Lemmata \ref{Ninv} and \ref{jfnonzero} we obtain that (for $\epsilon$ small enough)
$$
\bigl|\bigl\{x\in V_r\colon\deg(g,B,f(x))\neq \Deg(f,B,f(x))\bigr\}\bigr|\leq \frac{1}{10}|V_r|.
$$

Since the sets $\{y\colon \deg(g,B,y)=0\}$ and $\{y\colon \deg(g,B,y)=1\}$ are open and 
$f_m\to f$ a.e., we can take 
$m$ so large that 
\begin{equation}\label{by a.e.}
    \bigl|\bigl\{x\in V_r\colon\deg(g,B,f_m(x))\ne \deg(g,B,f(x))\bigr\}\bigr|<\frac{1}{10}|V_r|. 
\end{equation}
Finally using \eqref{defK} and \eqref{reverse} (as in \eqref{Ej}) we obtain 
$$
    |\{x\in V_r\colon f_m(x)\notin H\}|\leq \frac{1}{10}|V_r|
$$
and all these inequalities together give us    
\begin{equation}\label{cil}
    |V_r\setminus Y|\le \frac{1}{2}|V_r|.
\end{equation}

It follows that for many points $x\in V_r$ we have
$$
    \deg(g_m,B,f_m(x))=0,
$$
but 
$$
    \deg(f_m,B,f_m(x))=1
$$
since $f_m$ is a homeomorphism and $x\in V_r\subset B$. Therefore
\begin{equation}\label{klic}
    \left|\{x\in V_r:\ f_m(x)\in E_j\text{ for some }j\in J^+\}\right|\geq \frac{1}{2}|V_r|. 
\end{equation}

{\underline{Step 6. Integral $\int_{S_j} |\cof Df_m|$ is big on a small set $S_j$ for some $j\in J^+$}:} 

Using \eqref{defM} and \eqref{klic} we fix $j\in J^+$ such that for
$$
U:=\{x\in V_r:\ f_m(x)\in E_j\}\text{ we have } \bigl|U\bigr|\geq \frac{1}{2\# J^{+}}|V_r|\geq C|V_r|.
$$
From Lemma \ref{l:reverse} \eqref{reverse} we obtain that
\eqn{velky}
$$
|f_m(U)|\geq 2\delta,
$$
where $\delta$ is constant which does not depend on $m$ or $\epsilon$. 

From the definition of $E_j$ (see \eqref{defEj} and Fig. \ref{fig_E_j}) we obtain that $E_j$ is an open set and 
$$
\partial E_j\subset f_m(S_j)\cup g_{m}(S_j).
$$

We know that (see \eqref{morrey}, \eqref{new} and \eqref{osc})
$$
    \diam(g_m(\overline{S_j}))\leq C\epsilon
$$
and thus we can find a ball $B_0$ of radius $C\epsilon$ such that $g_m(\overline{S_j})\subset B_0$. Now the set 
$$
\tilde{E}_j:=E_j\setminus \overline{B_0}
$$
is open,  
$$
\partial \tilde{E}_j\subset f_m(S_j)\cup \partial B_0. 
$$
and using \eqref{velky} we obtain that 
$$
|\tilde{E}_j|\geq 2\delta-C\epsilon^n>\delta
$$
once $\epsilon$ is small enough. 

It is not difficult to show that the set $\tilde{E}_j$ has finite perimeter, and therefore we can use
 the isoperimetric inequality \eqref{isoperimetric} 
and Theorem \ref{chm} to get
$$
\begin{aligned}
\delta^{1-\frac{1}{n}}&\leq |\tilde{E}_j|^{1-\frac{1}{n}}\leq C \haus^{n-1}(\partial \tilde{E}_j)\leq C\bigl(\haus^{n-1}(f_m(S_j))+\haus^{n-1}(\partial B_0)\bigr)\\
&\leq C(r)\Bigl(\int_{S_j}|\cof Df_m| \,d\haus^{n-1} +C_0 \epsilon^{n-1}\Bigr). 
\end{aligned}
$$
It follows that for $\epsilon$ sufficiently small we get 
$$
\frac{1}{2}\delta^{1-\frac{1}{n}}\leq  C(r)\int_{S_j}|\cof Df_m| \,d\haus^{n-1}
$$
and this estimate on $S_j$ (with $\diam S_j\leq \epsilon$) clearly contradicts the uniform integrability of $|\cof Df_m|$ given by \eqref{druhyodhad}.

{\underline{Step 7. Something from outside of $B(c,r)$ goes inside $\topi(f,B(c,r))$}:} 

This case works analogously. We can find a ball $B(c,r)$ and $V_r\subset \Omega\setminus B(c, r)$ such that its big part is mapped inside the topological images of $B(c,r)$. Therefore, $f_m$ creates bubbles inside. We can define $F_j$, $F$, $E_j$ and $E$ in a similar way and conclude. 
\end{proof}

Recall that the symmetric difference of two sets $S,T\subset\rn$ is defined as
$$
S\triangle T:=(S\setminus  T)\cup(T\setminus S). 
$$

\begin{lemma}\label{lem:fk-ft}
Let $n\geq 3$, $\Omega,\Omega'\subset\rn$ be bounded domains
and let $\ff$  
satisfy \eqref{varphi} and \eqref{varphi2}.  
Let $f_m\in W^{1,n-1}(\Omega,\Omega')$, 
$m =0,1,2\dots$, 
be a sequence of 
homeomorphisms of 
$\overline\Omega$ onto $\overline{\Omega'}$ 
with $J_{f_m}> 0$ a.e.\ 
such that 
$$
\sup_m \int_{\Omega}\Bigl(|D f_m|^{n-1}+\ff(J_{f_m})\Bigr)\dx<\infty. 
$$
Assume further that $f_m=f_0$ on $\partial\Omega$ for all $m\in\en$. 
Let $f$ be a weak limit of $f_m$ in $W^{1,n-1}(\Omega,\rn)$ and $B\subseteq \Omega$ be a ball such that $f$ satisfies $\INV$ for $B$,
    \begin{itemize}
\item $f_m \to f$ weakly in $W^{1,n-1}(\partial B, \mathbb{R}^n)$,
\item $f_m\to f$ $\haus^{n-1}$-a.e.\ on $\partial B$,
\item $\int_{\partial B} \left(|D_{\tau} f|^{n-1}+|D_{\tau} f_m|^{n-1} \right)\,d\haus^{n-1} < C.$
    \end{itemize}
    Then it holds that
    \begin{equation}\label{eq:fk-ft}
        |f_k(B)\triangle \topi(f,B)|\overset{k\to\infty}{\to} 0.
    \end{equation}
\end{lemma}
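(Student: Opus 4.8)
The plan is to compare $f_k(B)$ with $\topi(f,B)$ by controlling the symmetric difference via the degree. Since $J_{f_k}>0$ a.e.\ and $f_k$ is a homeomorphism, Proposition~\ref{p:top=anal} gives $\deg(f_k,B,y)=1$ precisely for $y\in f_k(B)$ (and $0$ otherwise, off $f_k(\partial B)$), so $f_k(B)$ equals (up to a null set) $\{y:\deg(f_k,B,y)\neq 0\}$. On the other hand, $\topi(f,B)$ is by Definition~\ref{image} the set of density-one points of $\{y:\Deg(f,\partial B,y)\neq 0\}$. Hence
$$
|f_k(B)\triangle\topi(f,B)|\le \bigl|\{y:\Deg(f_k,\partial B,y)\neq\Deg(f,\partial B,y)\}\bigr|.
$$
So it suffices to show the right-hand side tends to $0$ as $k\to\infty$. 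This is exactly the kind of quantity controlled by the ``weak isoperimetric inequality'' \eqref{odhad} applied with $g=f_k$ and $f$ on $\partial B$, whose right-hand side is
$$
C\int_{\partial B\cap\{f\neq f_k\}}\bigl(|D_\tau f|^{n-1}+|D_\tau f_k|^{n-1}\bigr)\,d\haus^{n-1}.
$$

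The main point is then to prove that this last integral goes to $0$. We have the uniform bound $\int_{\partial B}(|D_\tau f|^{n-1}+|D_\tau f_k|^{n-1})\,d\haus^{n-1}<C$ from the hypothesis, and $f_k\to f$ weakly in $W^{1,n-1}(\partial B,\rn)$, hence (by compact embedding on the $(n-1)$-dimensional sphere into $L^{n-1}$) strongly in $L^{n-1}(\partial B)$ and, passing to a subsequence, $\haus^{n-1}$-a.e.\ on $\partial B$ — which is in fact already assumed. The natural strategy is: the family $\{|D_\tau f_k|^{n-1}\}_k$ need not be equiintegrable, but the set $\partial B\cap\{f\neq f_k\}$ has $\haus^{n-1}$-measure tending to $0$ by Egorov (since $f_k\to f$ a.e.\ on a finite-measure space); however, $L^{n-1}$-boundedness alone does not make the integral over a shrinking set vanish. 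So I would instead split the sphere: on a large subset $G_\eta\subset\partial B$ where $f_k\to f$ uniformly (Egorov again), the set $\{f\neq f_k\}\cap G_\eta$ is eventually empty, so the integral there is $0$; on the small leftover set $\partial B\setminus G_\eta$ of measure $<\eta$, we only have the uniform $L^{n-1}$ bound. To handle this I need equiintegrability of $|D_\tau f_k|^{n-1}$ on $\partial B$.

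This is where the extra term $\ff(J_{f_k})$ and the $\INV$ condition enter. The idea is: weak convergence in $W^{1,n-1}(\Omega,\rn)$ already gives that $\{|Df_k|^{n-1}\}$ is equiintegrable on $\Omega$; by a Fubini/Fatou argument over the radii (as in the proof of the $\INV$ condition, citing \cite[Lemma 2.9]{MS}) one can choose the radius $r$ of $B=B(c,r)$ so that the restrictions $|D_\tau f_k|^{n-1}$ are equiintegrable on $\partial B$ as well — but note the statement of the lemma is for a \emph{given} such $B$ with the three listed properties, so I would either (a) incorporate equiintegrability of $|D_\tau f_k|^{n-1}$ on $\partial B$ as a consequence of the listed hypotheses plus a subsequence choice, or (b) observe that the proof of Theorem~\ref{main} only applies this lemma for balls $B$ that were already selected with that additional good property. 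Assuming equiintegrability on $\partial B$: given $\sigma>0$, pick $\eta>0$ with $\int_E(|D_\tau f|^{n-1}+|D_\tau f_k|^{n-1})<\sigma$ whenever $\haus^{n-1}(E)<\eta$ (uniformly in $k$); by Egorov choose $G_\eta$ with $\haus^{n-1}(\partial B\setminus G_\eta)<\eta$ and $f_k\to f$ uniformly on $G_\eta$; for $k$ large, $\{f\neq f_k\}\cap G_\eta=\varnothing$, hence
$$
\int_{\partial B\cap\{f\neq f_k\}}\bigl(|D_\tau f|^{n-1}+|D_\tau f_k|^{n-1}\bigr)\,d\haus^{n-1}\le \int_{\partial B\setminus G_\eta}\bigl(|D_\tau f|^{n-1}+|D_\tau f_k|^{n-1}\bigr)\,d\haus^{n-1}<\sigma.
$$
Feeding this into \eqref{odhad} gives $|\{y:\Deg(f_k,\partial B,y)\neq\Deg(f,\partial B,y)\}|\to 0$, and combined with the first paragraph, \eqref{eq:fk-ft} follows. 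Since the conclusion \eqref{eq:fk-ft} is about a subsequence-free limit of a nonnegative quantity, the usual ``every subsequence has a further subsequence converging to $0$'' argument upgrades the subsequential a.e.\ convergence to the full statement. The main obstacle I anticipate is precisely justifying the equiintegrability of $|D_\tau f_k|^{n-1}$ on the fixed sphere $\partial B$ — this does not follow from the three bulleted hypotheses alone, so I expect the lemma is applied only after such a radius has been selected (as happens in Step 2 of the proof of Theorem~\ref{main}), and I would state that dependence explicitly.
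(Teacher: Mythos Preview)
Your approach has a genuine gap at the key step. You write that on the Egorov set $G_\eta$ where $f_k\to f$ uniformly, ``the set $\{f\neq f_k\}\cap G_\eta$ is eventually empty''. This is false: uniform convergence makes $|f_k-f|$ small on $G_\eta$, but it does not force $f_k=f$ there. In general $\{f\neq f_k\}$ can have full $\haus^{n-1}$-measure on $\partial B$ for every $k$, so the right-hand side of \eqref{odhad} is merely bounded by $C$ and does not tend to $0$. The estimate \eqref{odhad} is sensitive only to the \emph{set} where the two maps disagree, not to how close their values are; hence a direct application of \eqref{odhad} to the pair $(f,f_k)$ cannot yield the conclusion. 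Your fallback hope, equiintegrability of $|D_\tau f_k|^{n-1}$ on $\partial B$, is also unavailable: the hypotheses give only an $L^1$ bound on $|D f_k|^{n-1}$ (weak $W^{1,n-1}$-convergence does not entail equiintegrability of the $(n-1)$-st power of the gradient; that would be essentially strong convergence), and no choice of radius remedies this.

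The paper's proof is entirely different and crucially uses the two hypotheses you never invoke: that $f$ satisfies $\INV$ in $B$, and the superlinear growth \eqref{varphi2} of $\ff$. It argues by contradiction: if $|f_m(B)\triangle\topi(f,B)|>4\lambda$ along a subsequence, then (say) $|\topi(f,B)\setminus f_m(B)|>2\lambda$, so there are sets $U_m\subset\Omega\setminus B$ with $f_m(U_m)\subset\topi(f,B)$ and $|f_m(U_m)|>2\lambda$. The bound \eqref{reverse2} from Lemma~\ref{l:reverse} (this is where \eqref{varphi2} enters) forces $|U_m|$ to be bounded below, so $U:=\limsup U_m$ has positive measure. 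To pass to the limit one replaces $\Deg(f,B,\cdot)$ by the classical degree of the continuous surrogate $g$ built in Step~2 of Section~\ref{three} (this is exactly where the degree comparison is done --- not via \eqref{odhad} between $f$ and $f_k$, but between $f$ and $g$ on the small caps $S_j$). Since $\{y:\deg(g,B,y)\neq0\}$ is open and $f_m\to f$ a.e., one concludes $f(x)\in\topi(f,B)$ for a.e.\ $x\in U'\subset\Omega\setminus B$ with $|U'|>0$, contradicting $\INV$. The other case is symmetric. In short, the $\INV$ hypothesis is not decorative here; it is the mechanism that closes the argument.
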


\begin{proof}
Let us assume by contradiction that we have a subsequence $f_{m_k}$ such that 
$$ 
|f_{m_k}(B)\triangle \topi(f,B)|>4\lambda > 0.
$$ 
We show that then $f$ does not satisfy $\INV$.

For simplicity, we pass to that subsequence and keep the notation $f_m$. We consequentially choose a further subsequence such that $f_m(x)\to f(x)$ a.e.\ on $B$ and, again passing to subsequences if neccessary, given $\epsilon>0$ we find $g$ as in the proof of Theorem \ref{main} such that 
\eqn{qqqq}
$$
|\topi(f,B)\triangle \{y\in\Omega': \deg(g,B,y)\neq 0\} |\leq |\{y\in\Omega': \deg(g,B,y)\neq \Deg(f,B,y)\}|\leq \varepsilon.
$$
Let us split into two cases: either $ |\topi(f,B)\setminus f_m(B)|>2\lambda$ or $|f_m(B)\setminus \topi(f,B)|>2\lambda$ for infinitely many $m$ and thus we can assume that this is true for all $m$.

In the first case we find
$$
U_m\subseteq \Omega\setminus B\text{ such that } f_m(U_m)\subseteq \topi(f,B)\text{ and } |f_m(U_m)|>2\lambda.
$$ 
We can assume that $\epsilon<\lambda$ and hence we can find $U_m'\subseteq U_m$ such that 
$$
f_m(U_m')\cap \bigl(\topi(f,B)\triangle \{y\in\Omega': \deg(g,B,y)\neq 0\}\bigr)=\varnothing 
$$
and $|f_m(U_m')|>\lambda$. We know from \eqref{reverse2} that $|U_m'|>\Psi^{-1}(\lambda)$. Therefore, 
$$
|U|>\Psi^{-1}(\lambda)/2,\text{ where }
U=\limsup U_m'=\bigcap_{k=1}^{\infty}\bigcup_{m=k}^{\infty}U_m'.
$$ 
Using Theorem \ref{minimizers} (as $|h(S)|=0$ there) and \eqref{qqqq} we have 
$$
\bigl|\partial \{y\in\Omega': \deg(g,B,y)\neq 0\})\bigr|=0 \text{ and } 
\bigl|\topi(f,B)\triangle \{y\in\Omega': \deg(g,B,y)\neq 0\}\bigr|<\varepsilon
$$
and according to Lemmata \ref{jfnonzero} and \ref{Ninv} their preimages under $f$ are arbitrarily small (depending on $\varepsilon$). Hence we can set $\varepsilon$ to be small enough such that $|U'|>\Psi^{-1}(\lambda)/2$, where
$$
U'=U\setminus\Bigl[f^{-1}\Bigl(\partial \{y\in\Omega': \deg(g,B,y)\neq 0\}\cup \bigl(\topi(f,B)\triangle \{y\in\Omega': \deg(g,B,y)\neq 0\}\bigr)\Bigr)\Bigr].
$$
For every $x\in U'$ we have a subsequence $f_{m_k}$ such that $x\in U_{m_k}'$ and thus
$$
f_{m_k}(x)\in \topi(f,B)\cap \{y\in\Omega': \deg(g,B,y)\neq 0\}.
$$
Since $f_m\to f$ pointwise a.e., we have for a.e.\ $x\in U'$ that 
$$
f(x)\in \overline{\{y\in\Omega': \deg(g,B,y)\neq 0\}}.
$$
However, from the definition of $U'$ we know that actually 
$$
f(x)\in \{y\in\Omega': \deg(g,B,y)\neq 0\},
$$
and since 
$$
x\notin f^{-1}\bigl(\topi(f,B)\triangle \{y\in\Omega': \deg(g,B,y)\neq 0\}\bigr),
$$
we have that $f(x)\in \topi(f,B)$ for every $x\in U'$. That contradicts $\INV$.

We deal with the second case analogously. We find
$$
U_m\subseteq B\text{ such that } f_m(U_m)\cap \topi(f,B)=\varnothing\text{ and } |f_m(U_m)|>2\lambda.
$$ 
We define the following sets in the same way and arrive to 
$$
f_{m_k}(x)\in \bigl(\Omega'\setminus\topi(f,B)\bigr)\cap 
\bigl(\Omega'\setminus \{y\in\Omega': \deg(g,B,y)\neq 0\}\bigr).
$$
Again, $x$ is not in the $f$-preimage of 
$$
\partial \{y\in\Omega': \deg(g,B,y)\neq 0\}\text{ nor of }
\topi(f,B)\triangle \{y\in\Omega': \deg(g,B,y)\neq 0\},
$$ 
and therefore $f(x)\notin \topi(f,B)$ for a set of a positive measure, which contradicts $\INV$.
\end{proof}

\section{Proof of Theorem~\ref{main}: $\N$ condition, lower semicontinuity and injectivity a.e.  }

\subsection{Lusin $\N$ condition}\label{sec:Lusin}

\begin{lemma}\label{lem:N}
    Let $f_{m} \in W^{1,n-1}(\Omega,\mathbb{R}^n)$ be a sequence of homeomorphisms with $J_{f_m} >0$ a.e. such that $f_{m}$ satisfies the Lusin $\N$ condition and the sequence of Jacobians $J_{f_m}$ is equiintegrable.
     Assume that $f\in W^{1,n-1}(\Omega,\rn) \cap L^{\infty}(\Omega, \rn)$ is a weak limit of $f_m$ in $W^{1,n-1}(\Omega,\mathbb{R}^n)$ such that for every $a\in\Omega$ there is $r_a>0$ such that for $\mathcal{H}^1$-a.e.~$r\in (0,r_a)$ it satisfies \eqref{eq:fk-ft} and $\INV$ for $B(a,r )$.
    Then,
    \begin{enumerate} 
        \item[(i)] the distributional Jacobian $\Det Df \geq 0$ is a Radon measure;
        \item[(ii)] $\Det Df$ is absolutely continuous w.r.t.~Lebesgue measure
        : for any set $E\subset \Omega$ with $|E|=0$, it holds that $\Det Df (E) = 0$;
        \item[(iii)] $f$ satisfies the Lusin $\N$ condition.
    \end{enumerate}
\end{lemma}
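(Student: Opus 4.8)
The plan is to deduce (i) and (ii) from the corresponding statements in Lemma~\ref{lem:distributional_jacobian} together with the convergence of topological images \eqref{eq:fk-ft}, and then to derive (iii) from (ii) by a covering argument. For (i), since $f$ satisfies $\INV$ and $J_f>0$ a.e.\ (recall $J_f\neq 0$ a.e.\ from Lemma~\ref{jfnonzero}, and the sign is fixed because $f$ is a weak limit of orientation-preserving homeomorphisms), part (i) of Lemma~\ref{lem:distributional_jacobian} gives directly that $\Det Df\geq 0$, hence is a Radon measure; no further argument is needed. For part (ii) of the present lemma, the key point is to identify the total mass of $\Det Df$ on balls. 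By Lemma~\ref{lem:distributional_jacobian}(iii), for every $a\in\Omega$ and a.e.\ small $r$ we have $\Det Df(B(a,r))=|\topi(f,B(a,r))|$. Now \eqref{eq:fk-ft} says $|f_k(B(a,r))\triangle\topi(f,B(a,r))|\to 0$, so $|\topi(f,B(a,r))|=\lim_{k}|f_k(B(a,r))|$. Since each $f_k$ is a homeomorphism satisfying the Lusin $\N$ condition with $J_{f_k}>0$, the area formula \eqref{area2} gives $|f_k(B(a,r))|=\int_{B(a,r)}J_{f_k}\dx$. Equiintegrability of $\{J_{f_m}\}$ together with $J_{f_k}\to J_f$ at least in the sense of weak-$L^1$ convergence of the minors (which follows from weak $W^{1,n-1}$ convergence, since the Jacobian is a null Lagrangian / distributional minor that is weakly continuous on $W^{1,n-1}$ for equiintegrable sequences) yields $\int_{B(a,r)}J_{f_k}\dx\to\int_{B(a,r)}J_f\dx$. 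Hence $\Det Df(B(a,r))=\int_{B(a,r)}J_f\dx$ for a.e.\ small $r$ around every point, and a Vitali covering argument upgrades this to $\Det Df=J_f\,\mathcal L^n$ as measures, which is absolutely continuous; in particular $\Det Df(E)=0$ whenever $|E|=0$.

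For (iii), I would argue that $|f(E)|=0$ for every $E\subset\Omega$ with $|E|=0$ by localizing to balls on which $\INV$ holds. Fix such an $E$; it suffices to show $|f(E\cap B)|=0$ for balls $B$ from a countable family covering $\Omega$ on which $f$ satisfies $\INV$ and \eqref{eq:fk-ft}. On such a $B$, by Lemma~\ref{lem:img} we have $f(B)\subset\topi(f,B)$, and $f(E\cap B)$ is the geometrical image of a null set. The idea is to cover $f(E\cap B)$ efficiently: for any $\eta>0$ choose an open set $G\supset E\cap B$ with $|G|<\eta$, write $G\cap B$ as a countable disjoint-up-to-null union of small balls $B_i\subset B$ on each of which $f$ satisfies $\INV$ and $\Det Df(B_i)=\int_{B_i}J_f$ (as established in (ii)); then $f(E\cap B)\subset\bigcup_i f(B_i)\subset\bigcup_i\topi(f,B_i)$, and by Lemma~\ref{lem:distributional_jacobian}(iii) $|\topi(f,B_i)|=\Det Df(B_i)=\int_{B_i}J_f\dx$, so $|f(E\cap B)|\leq\sum_i\int_{B_i}J_f\dx=\int_{G\cap B}J_f\dx$. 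Since $J_f\in L^1$ and $|G|\to 0$, absolute continuity of the integral forces $\int_{G\cap B}J_f\dx\to 0$, giving $|f(E\cap B)|=0$.

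The main obstacle is the identification $\Det Df(B(a,r))=|\topi(f,B(a,r))|=\lim_k\int_{B(a,r)}J_{f_k}\dx=\int_{B(a,r)}J_f\dx$, i.e.\ making rigorous the passage from $f_k$ to $f$ at the level of the (distributional) Jacobian. Two things must be handled carefully here: first, that weak $W^{1,n-1}$ convergence plus equiintegrability of $J_{f_m}$ indeed gives $\int_{B(a,r)}J_{f_k}\dx\to\int_{B(a,r)}J_f\dx$ for a.e.\ $r$ (one should intersect the full-measure sets of good radii coming from Lemma~\ref{lem:distributional_jacobian}, from \eqref{eq:fk-ft}, and from the convergence of boundary integrals, and may need the Dunford--Pettis theorem to extract an $L^1$-weakly convergent subsequence of $J_{f_m}$ whose limit is identified as $J_f$ via the distributional Jacobian being weakly continuous on bounded sets of $W^{1,n-1}$ with equiintegrable minors, cf.\ \cite{M,BM}); second, the Vitali covering step turning the a.e.-$r$ identity into an identity of measures, which is routine but uses that both sides are (outer) regular. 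Everything else is bookkeeping with the change-of-variables formulae \eqref{area2}, the isoperimetric-type inputs already collected, and Lemmata~\ref{lem:img}--\ref{lem:distributional_jacobian}.
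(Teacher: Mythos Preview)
Your argument for (i) is fine and matches the paper. The gap is in (ii): you assert that $\int_{B}J_{f_k}\to\int_{B}J_f$ because ``the Jacobian is a null Lagrangian\,/\,distributional minor that is weakly continuous on $W^{1,n-1}$ for equiintegrable sequences''. This is precisely what is \emph{not} available at this regularity. The distributional Jacobian is expressed through the $(n-1)\times(n-1)$ minors (cofactors), and weak $L^1$-continuity of those minors in $W^{1,n-1}$ requires equiintegrability of $|\cof Df_m|$ (this is why the paper imposes \eqref{A} elsewhere). The hypotheses of Lemma~\ref{lem:N} only give equiintegrability of $J_{f_m}$, not of the cofactors, so you cannot identify the Dunford--Pettis limit $j$ of $J_{f_m}$ with $J_f$ by weak continuity. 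In fact the paper proves $J_{f_m}\rightharpoonup J_f$ in $L^1$ as Lemma~\ref{lem:jacobian_convergence}, and that proof \emph{uses} Lemma~\ref{lem:N}(ii). Your plan therefore reverses the logical order and is circular.

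The paper's route to (ii) avoids this identification entirely: cover $E$ by a Besicovitch family of balls $B_k$ on which $\INV$ and \eqref{eq:fk-ft} hold, use $\Det Df(B_k)=|\topi(f,B_k)|$ from Lemma~\ref{lem:distributional_jacobian}(iii), then bound $|\bigcup\topi(f,B_k)|\le |f_m(\bigcup B_k)|+|f_m(\bigcup B_k)\triangle\bigcup\topi(f,B_k)|$ and control the first term by Lemma~\ref{l:reverse} \eqref{reverse2} (which needs only equiintegrability of $J_{f_m}$ and the Lusin $\N$ condition for $f_m$) and the second by \eqref{eq:fk-ft}. No statement about the weak limit of $J_{f_m}$ is needed.

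A smaller issue in your (iii): a Vitali cover of the open superset $G$ by disjoint balls misses a null set $N\subset G$, and you then need $|f(N)|=0$ --- but that is exactly (iii). Using Besicovitch to cover $E$ itself (as the paper does) removes this residual null set, and then your estimate $|f(E)|\le|\bigcup\topi(f,B_k)|$ via Lemma~\ref{lem:img} goes through with the same bound as in (ii).
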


\begin{proof}

    The first item $(i)$  is stated in Lemma~\ref{lem:distributional_jacobian} $(i)$.
    
    Let $E\subset\Omega$ with $|E|=0$ be given. Fix $\delta >0$ and let $c(n)$ be a constant from Besicovitch covering theorem. Since $E$ is a set of measure zero, there exists 
    an open set $U \subset \Omega$ such that $E\subset A$ and
    $|U| < \frac{\delta}{c(n)}$. 
    Consider a covering of $E$ by balls $B(a,\tilde{r}_{a})$ for all $a\in E$ such that $B(a,\tilde{r}_{a})\subset U$, $f$ satisfies $\INV$ for $B(a,\tilde{r}_{a})$ and \eqref{eq:fk-ft} holds on this ball. 
    By the Bezicovitch Theorem (e.g.~\cite[Theorem A.2]{HK}), we can find at most countable collection of balls $B_k:=B(a_k,\tilde{r}_{k})$ such that 
	$$
	E\subset \bigcup_{k} B_k\subset U\text{ and }\bigcup_k B_{k} = \bigcup_{j=1}^{c(n)} \bigcup_{B_i \in A_j} B_{i},
	$$
    where subcollections $A_j$ consists of disjoined balls $B_{i}$ and a constant $c(n)$ depends only on the dimension $n$.

    Note that since \eqref{eq:fk-ft} is valid for one ball of the covering, it stays true for a finite union of such balls $\bigcup_{k=1}^{M} B_{k}$: 
    for any $\varepsilon>0$ and $m$ big enough it holds that
    \begin{equation}\label{eq:N1}
        \Bigl|f_m\Bigl(\bigcup_{k=1}^{M} B_{k}\Bigr) \triangle  \Bigl(\bigcup_{k=1}^{M} \topi\left(f,B_{k}\right)\Bigr)\Bigr| \leq \frac{\varepsilon}{3 c(n)}.
    \end{equation}
    Then, by Lemma~\ref{lem:distributional_jacobian} $(iii)$ we have
    \begin{equation*}
    \begin{aligned}
		\Det Df\Bigl(\bigcup_{k=1}^{M} B_{k}\Bigr) & =
        \Det Df\Bigl(\bigcup_{j=1}^{c(n)} \bigcup_{B_i \in A_j,i\leq M} B_{i}\Bigr) \leq 
        \sum_{j=1}^{c(n)}\Det Df\Bigl(\bigcup_{B_i \in A_j, i\leq M} B_{i}\Bigr) \\
        & \leq \sum_{j=1}^{c(n)}
        \Bigl|\bigcup_{B_i \in A_j, i\leq M}\topi (f, B_i)\Bigr|
        \leq c(n)
        \Bigl|\bigcup_{k=1}^{M}\topi (f, B_k)\Bigr|.
    \end{aligned}
	\end{equation*}
    To prove $(ii)$, we fix $\varepsilon>0$ and $\delta>0$ such that
    \begin{equation}\label{eq:N2}
        \Psi(t) < \frac{\varepsilon}{3 \, 
        c(n)
        } \quad\text{ for any } t<\delta,
    \end{equation}
    where $\Psi$ is given by Lemma~\ref{l:reverse} (note that in the proof of this part of lemma we have used only equiintegrability of $J_{f_m}$ and Lusin $\N$ condition for $f_m$).
    Since $\bigcup_k B_k\subset U$ and $|U|<\delta$ we have using \eqref{reverse2}
    $$
    \sum_{k\in\mathbb{N}} \left|B_{k}\right| < \delta,\text{ and therefore }
    \Bigl|f_m\Bigl(\bigcup_{k=1}^{M} B_{k}\Bigr)\Bigr| < \dfrac{\varepsilon}{3 \,
        c(n)
        }\text{ for any }m\in\en.
    $$
    Relying on \eqref{eq:N1}--\eqref{eq:N2}, for $M$ big enough there exists $m$ such that
    \begin{equation*}
    \begin{aligned}
        \Det Df(E) & \leq 
        \Det Df\Bigl(\bigcup_{k\in\en} B_{k}\Bigr) \leq 
        \Det Df\Bigl(\bigcup_{k=1}^{M} B_{k}\Bigr) + \frac{\varepsilon}{3} 
        \leq c(n)
        \Bigl|\bigcup_{k=1}^{M}\topi (f,B_k)\Bigr| +\frac{\varepsilon}{3} \\
        & \leq 
        c(n)
        \Bigl|f_m\Bigl(\bigcup_{k=1}^{M} B_{k}\Bigr)\Bigr| + 
        c(n) \Bigl|f_m\Bigl(\bigcup_{k=1}^{M} B_{k}\Bigr) \triangle  \Bigl(\bigcup_{k=1}^{M} \topi\left(f,B_{k}\right)\Bigr)\Bigr| +\frac{\varepsilon}{3}
        \leq \varepsilon.
    \end{aligned}
    \end{equation*}

 For $(iii)$ it is enough to notice that
    $|f(B)| \leq |\topi f(B)|$ by Lemma \ref{lem:img}. 
    Hence
    \begin{equation*}
    \begin{aligned}
        |f(E)| &\leq  
        \Bigl|\bigcup_{k=1}^{M}\topi (f,B_k)\Bigr| +\frac{\varepsilon}{3} \\
        & \leq \Bigl|f_m\Bigl(\bigcup_{k=1}^{M} B_{k}\Bigr)\Bigr| 
        + \Bigl|f_m\Bigl(\bigcup_{k=1}^{M} B_{k}\Bigr) \triangle  \Bigl(\bigcup_{k=1}^{M} \topi\left(f,B_{k}\right)\Bigr)\Bigr| +\frac{\varepsilon}{3}
        \leq \varepsilon.
    \end{aligned}
    \end{equation*}
    Since $\varepsilon>0$ has been chosen arbitrary, we conclude
    $\Det Df(E) = |f(E)| =0$.

\end{proof}

\subsection{Lower semicontinuity}\label{sec:lsc}

The main obstacle to obtain the lower semicontinuity of $\F$ is to ensure weak convergence of Jacobians. One usually has to assume higher regularity of $f \in W^{1,n}$ (see e.g.~\cite{CM,DMS,M}) but this is not available for us. 


\begin{lemma}\label{lem:jacobian_convergence}
    Let $f_{m} \in W^{1,n-1}(\Omega,\mathbb{R}^n)$ be a sequence of homeomorphisms with $J_{f_m} >0$ a.e., such that $f_{m}$ satisfies the Lusin $\N$ condition and the sequence of Jacobians $J_{f_m}$ is equiintegrable.
     Assume that $f\in W^{1,n-1}(\Omega,\rn) \cap L^{\infty}(\Omega, \rn)$ is a weak limit of $f_m$ in $W^{1,n-1}(\Omega,\mathbb{R}^n)$ such that for every $a\in\Omega$ there is $r_a>0$ such that for $\mathcal{H}^1$-a.e.~$r\in (0,r_a)$ it satisfies \eqref{eq:fk-ft} and $\INV$ for $B(a,r )$. 
    Then there exists a subsequence $\{J_{f_{k_m}}\}_{m\in\en}$  of $\{J_{f_k}\}_{k\in\en}$ such that
    $J_{f_{k_m}} \rightharpoonup J_{f}$ weakly in $L^{1}(\Omega)$.
\end{lemma}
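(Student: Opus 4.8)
The plan is to extract a weakly $L^1$-convergent subsequence of the Jacobians using equiintegrability, and then to identify the limit with $J_f$ by testing it against characteristic functions of small balls, using \eqref{eq:fk-ft} to replace those integrals by volumes of images and Lemmas~\ref{lem:N} and~\ref{lem:distributional_jacobian} to express the volumes through $\Det Df$. Concretely, I would first invoke the Dunford--Pettis theorem: since $\{J_{f_k}\}_{k\in\en}$ is equiintegrable in $L^1(\Omega)$, there is a subsequence (still denoted $f_{k_m}$) and a non-negative $g\in L^1(\Omega)$ with $J_{f_{k_m}}\rightharpoonup g$ weakly in $L^1(\Omega)$; it then suffices to show $g=J_f$ a.e., since that forces $J_f=g\in L^1(\Omega)$ and gives the claim. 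Fix $a\in\Omega$, shrink $r_a$ so that $B(a,r_a)\subset\subset\Omega$, and restrict attention to the (full measure) set of radii $r\in(0,r_a)$ for which, writing $B:=B(a,r)$, the map $f$ satisfies $\INV$ in $B$, \eqref{eq:fk-ft} holds for $B$ along $\{f_{k_m}\}$, and $\Det Df(B)=|\topi(f,B)|$ (Lemma~\ref{lem:distributional_jacobian}(iii)).

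For such a ball $B$ I would run the following chain. Testing the weak $L^1$ convergence against $\chi_B\in L^\infty(\Omega)$ gives $\int_B J_{f_{k_m}}\dx\to\int_B g\dx$. Since each $f_{k_m}$ is a homeomorphism with $J_{f_{k_m}}>0$ a.e.\ satisfying the Lusin $\N$ condition, the area formula \eqref{area2} together with injectivity (so $N(f_{k_m},B,\cdot)=\chi_{f_{k_m}(B)}$ a.e.) yields $\int_B J_{f_{k_m}}\dx=|f_{k_m}(B)|$, and \eqref{eq:fk-ft} gives $\bigl||f_{k_m}(B)|-|\topi(f,B)|\bigr|\le|f_{k_m}(B)\triangle\topi(f,B)|\to 0$. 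On the other hand, by Lemma~\ref{lem:N}(i)--(ii) the distribution $\Det Df$ is a non-negative measure absolutely continuous with respect to $\mathcal L^n$, and by Lemma~\ref{lem:distributional_jacobian}(ii) its density is $J_f$; hence $\int_B J_f\dx=\Det Df(B)=|\topi(f,B)|$. Combining, $\int_B g\dx=\int_B J_f\dx$ for every admissible $B$.

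Set $h:=g-J_f\in L^1_{\loc}(\Omega)$ (here $J_f\in L^1_{\loc}$ as the density of the Radon measure $\Det Df$). The previous step shows $\int_{B(a,r)}h\dx=0$ for a.e.\ $r\in(0,r_a)$ and every $a\in\Omega$. Since $r\mapsto\int_{B(a,r)}h\dx$ is continuous (absolute continuity of the integral, as $|B(a,r)\triangle B(a,r')|\to 0$ when $r'\to r$), this integral in fact vanishes for \emph{all} $r\in(0,r_a)$. Dividing by $|B(a,r)|$ and letting $r\to0^+$ at a Lebesgue point $a$ of $h$ gives $h(a)=0$; since a.e.\ point is a Lebesgue point, $h=0$ a.e., i.e.\ $g=J_f$ a.e. Thus $J_f=g\in L^1(\Omega)$ and $J_{f_{k_m}}\rightharpoonup J_f$ weakly in $L^1(\Omega)$.

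The only genuinely delicate point is the measure-theoretic identification of $g$: one must produce a rich enough family of balls on which simultaneously \eqref{eq:fk-ft} holds, the change of variables $\int_B J_{f_{k_m}}\dx=|f_{k_m}(B)|$ is legitimate (homeomorphism plus Lusin $\N$), and $|\topi(f,B)|=\int_B J_f\dx$; the hypotheses of the lemma together with Lemmas~\ref{lem:N} and~\ref{lem:distributional_jacobian} deliver exactly this, and everything else (Dunford--Pettis, and the passage from ``equal integrals over a.e.\ ball'' to ``equal a.e.'') is routine. Note in particular that one cannot simply argue via weak continuity of the distributional Jacobian $\Det Df_m\to\Det Df$, because $\{\cof Df_m\}$ is \emph{not} assumed equiintegrable in this lemma; routing the argument through the geometric convergence \eqref{eq:fk-ft} is precisely what circumvents this difficulty.
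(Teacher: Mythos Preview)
Your proof is correct and follows essentially the same route as the paper: extract a weak $L^1$ limit via Dunford--Pettis, compute $\int_B J_{f_{k_m}}=|f_{k_m}(B)|\to|\topi(f,B)|$ via the area formula and \eqref{eq:fk-ft}, identify $|\topi(f,B)|=\int_B J_f$ through Lemmata~\ref{lem:N} and~\ref{lem:distributional_jacobian}, and conclude by differentiation. You are in fact slightly more careful than the paper in two places: you cite \eqref{area2} (which needs the Lusin $\N$ hypothesis and gives equality) rather than \eqref{area1}, and you spell out the continuity-in-$r$ plus Lebesgue differentiation step that the paper leaves implicit.
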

\begin{proof}
    By the Dunford--Pettis Theorem (e.g.~\cite[Theorem B.103]{L}) there exist a (non-relabeled) subsequence $J_{f_m}$ and a function $j \in L^1(\Omega)$ s.t.~$J_{f_m} \to j$ weakly in $L^{1}(\Omega)$, 
    and hence for any measurable set~$E\subset \Omega$
    $$
        \int_E J_{f_m}(x) \dx\to \int_E j(x) \dx.
    $$
    Let $B$ be such that $f$ satisfies the $\INV$ condition and \eqref{eq:fk-ft} for $B$, 
    then 
    by the area formula \eqref{area1} and \eqref{eq:fk-ft} we have  
    \begin{equation*} 
        \int_B J_{f_m}(x) \dx = |f_m(B)|\to |\topi(f,B)|. 
    \end{equation*}
    
        On the other hand, by Lemma~\ref{lem:distributional_jacobian} $(ii)$--$(iii)$ and Lemma~\ref{lem:N} $(ii)$, we know that 
    $$\Det f(B) = \int_B J_f(x) \dx = |\topi f(B)|.$$ 		
    Therefore, for all such balls, it holds that
    $$
        \int_B j(x) \dx= |\topi (f,B)| =\int_B J_f(x) \dx,
    $$
    which in turn implies 
    $j(x)=J_f(x)$ for a.e.~$x\in\Omega$.
\end{proof}

\begin{lemma}\label{previous}
    Under conditions of Theorem~\ref{main}, the functional $\F$ is lower semicontinuous with respect to weak convergence, i.e.,~\eqref{optimistic} holds.
\end{lemma}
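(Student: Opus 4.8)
The plan is to recast $\F$ as a convex integral functional of the vector of minors of $Df$ and then apply the classical weak lower semicontinuity theorem for convex integrands. Setting $\Phi(F,H,\delta)=|F|^{n-1}+A(|H|)+\varphi(\delta)$ for matrices $F,H$ and $\delta>0$, and extending $\varphi$ by $+\infty$ on $(-\infty,0]$ (which is consistent with \eqref{varphi} and keeps $\varphi$ convex and lower semicontinuous), the functions $t\mapsto t^{n-1}$, $A$, $\varphi$ are convex and nondecreasing on $[0,\infty)$ and $F\mapsto|F|$ is convex, so $\Phi$ is nonnegative, convex and lower semicontinuous in all variables jointly; moreover $\F(g)=\int_\Omega\Phi(Dg,\cof Dg,J_g)\dx$, which makes sense since $J_f>0$ a.e.\ by Lemma \ref{jfnonzero}. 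Hence it suffices to show that, along a subsequence realising $\liminf_m\F(f_m)$, one has $(Df_m,\cof Df_m,J_{f_m})\rightharpoonup(Df,\cof Df,J_f)$ weakly in $L^1(\Omega)$; then \eqref{optimistic} follows from the standard lower semicontinuity of convex integral functionals under weak $L^1$ convergence (Mazur's lemma together with Fatou; see e.g.\ \cite{Dac2008}).

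First I would assume $L:=\liminf_m\F(f_m)<\infty$ and pass to a subsequence (not relabelled) with $\F(f_m)\to L$; all hypotheses of Theorem \ref{main}, hence of Lemmas \ref{lem:fk-ft}, \ref{lem:N} and \ref{lem:jacobian_convergence}, remain in force. The first component is free: $Df_m\rightharpoonup Df$ weakly in $L^{n-1}(\Omega)\subset L^1(\Omega)$ by hypothesis. For the cofactors, the bound $\sup_m\int_\Omega A(|\cof Df_m|)\dx<\infty$ together with the superlinearity \eqref{A} of $A$ yields, by the de la Vall\'ee Poussin theorem, equiintegrability of $\{\cof Df_m\}$, hence weak precompactness in $L^1(\Omega)$ by Dunford--Pettis; since the $(n-1)\times(n-1)$ minors of $Df$ depend weakly continuously on $f$ (they converge to the corresponding minors of $Df$ in the sense of distributions whenever $f_m\rightharpoonup f$ in $W^{1,n-1}$), the only possible weak $L^1$ cluster point is $\cof Df$, so $\cof Df_m\rightharpoonup\cof Df$ weakly in $L^1(\Omega)$. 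For the Jacobians I would invoke Lemma \ref{lem:jacobian_convergence}: its hypotheses hold, since $f\in W^{1,n-1}\cap L^{\infty}$ (because $\Omega'$ is bounded), $\{J_{f_m}\}$ is equiintegrable by \eqref{varphi2} and de la Vall\'ee Poussin, $f$ satisfies $\INV$ by the first part of Theorem \ref{main}, and \eqref{eq:fk-ft} holds for $\haus^1$-a.e.\ small radius about every point by Lemma \ref{lem:fk-ft} (after one further routine subsequence extraction, via a Fubini argument as in \cite[Lemma 2.9]{MS}, so that the bullet conditions of that lemma are met on a.e.\ sphere). This produces a further subsequence, still realising $L$, along which $J_{f_m}\rightharpoonup J_f$ weakly in $L^1(\Omega)$; applying the convex lower semicontinuity theorem to $\Phi$ then gives $\F(f)=\int_\Omega\Phi(Df,\cof Df,J_f)\dx\le L=\liminf_m\F(f_m)$.

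The main obstacle is precisely the weak $L^1$ convergence $J_{f_m}\rightharpoonup J_f$: unlike the lower-order minors, the determinant is the top-order minor and is genuinely not weakly continuous on $W^{1,n-1}$ — not even on $W^{1,n}$, which is exactly where cavitation lives — so no soft argument is available, and this is where essentially all the work of Sections \ref{three}--\ref{sec:lsc} is spent. Lemma \ref{lem:jacobian_convergence} delivers it by combining three identities: $\int_B J_{f_m}=|f_m(B)|\to|\topi(f,B)|$ from \eqref{eq:fk-ft}; $\Det Df(B)=|\topi(f,B)|$, valid once $f$ satisfies $\INV$, from Lemma \ref{lem:distributional_jacobian}; and $\int_B J_f=\Det Df(B)$, which requires the Lusin $\N$ condition for $f$ established in Lemma \ref{lem:N}. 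Exactly these identities break if one drops the Lusin $\N$ assumption on the $f_m$ or the superlinearity \eqref{varphi2}, since then mass can be created or can escape to $\partial\Omega'$, and \eqref{optimistic} then fails, as the examples of Section \ref{counter} show.
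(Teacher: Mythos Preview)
Your proposal is correct and follows essentially the same route as the paper: establish weak $L^1$ convergence of $Df_m$, $\cof Df_m$ and $J_{f_m}$ to the corresponding quantities for $f$ (the cofactors via equiintegrability from \eqref{A} plus distributional continuity of $(n-1)$-minors, the Jacobians via Lemma~\ref{lem:jacobian_convergence} after checking the hypotheses of Lemma~\ref{lem:fk-ft}), and then invoke a classical convex lower semicontinuity theorem for the integrand viewed as a function of the minors. The paper cites the De Giorgi theorem \cite[Theorem~3.23]{Dac2008} at the final step where you invoke Mazur plus Fatou, and your explicit identification of $\Phi$ and its convexity is just a spelled-out version of the same argument; one small caveat is that you assert $A$ is nondecreasing, which is not literally part of \eqref{A}, though this is the same tacit assumption the paper makes when applying the convex LSC theorem.
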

\begin{proof}
Weak convergence $f_k\to f$ in $W^{1,n-1}$ implies weak convergence (up to subsequence) in $L^1$ of
    all minors $\det_l(Df_m)$ of order $l\leq n-2$ (see e.g.~\cite[Lemma 5.10]{Ri}):
    $$
        \|\det_l(Df_m)\|_{L^1(\Omega)} \leq C \|Df_m\|_{L^{n-1}}^{l}
        \leq C M^{\frac{l}{n-1}}.
    $$ 
    Weak convergence of $\det_{n-1}(Df_k) = \cof Df_k$ in $L^1$ follows by the standard argument, provided uniform integrability \eqref{A} (see \cite[Theorem 6.2]{Ball} or \cite[Theorem 7.5-1]{Ciar1988}). 
    The equiintegrability of $J_{f_m}$ follows from \eqref{varphi2} by the de la Val\'{e}e Poussin Theorem (e.g.~\cite[Theorem B.104]{L}).
   Moreover, conditions of Lemma~\ref{lem:fk-ft} are fulfilled (see Step 2 of the proof in Section~\ref{three}).
    Therefore, Lemmata~\ref{lem:fk-ft} and \ref{lem:jacobian_convergence} ensure $J_{f_m}\to J_f$ weakly in $L^1$,
 which now allows us to use De Giorgi Theorem \cite[Theorem 3.23]{Dac2008} to conclude the lower semicontinuity \eqref{optimistic} of $\F$.
\end{proof}


\subsection{Injectivity almost everywhere}\label{sec:injectivity}
One of the main reasons to consider the $\INV$ condition is that it implies injectivity a.e.~\cite[Lemma 3.7]{CDL}, \cite[Lemma 3.4]{MS}, as discussed in Introduction.
In the setting of this paper, we can say even more.

\begin{lemma}\label{inverse}
    Let conditions of Theorem~\ref{main} be fulfilled and
    let $h$ be a weak limit of $f_m^{-1}$ in $W^{1,1}(\Omega',\er^n)$.
    Then $h(f(x))=x$ for a.e.~$x\in\Omega$ and under additional assumption $|\partial\Omega'|=0$ we have $f(h(y))=y$ for a.e.~$y\in \Omega'$.
\end{lemma}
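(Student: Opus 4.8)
The plan is to prove the two identities $h(f(x))=x$ a.e.\ in $\Omega$ and $f(h(y))=y$ a.e.\ in $\Omega'$ by combining the almost-everywhere convergences $f_m\to f$, $f_m^{-1}\to h$ with a uniform change-of-variables/equiintegrability argument, exactly in the spirit of the proof that a weak $W^{1,1}$-limit of homeomorphisms is the a.e.-inverse of the limit. First I would pass to a subsequence (not relabeled) so that $f_m\to f$ pointwise a.e.\ in $\Omega$ and $f_m^{-1}\to h$ pointwise a.e.\ in $\Omega'$; the latter requires that we control null sets in $\Omega'$ under $f_m^{-1}$, which follows from the equiintegrability of $Df_m^{-1}$ (Theorem~\ref{equiintegrable}) together with Lemma~\ref{Ninv} applied to $f_m^{-1}$, and from the fact that $h\in W^{1,1}$ so that $J_{f_m^{-1}}\to J_h$ in a controlled way; alternatively one uses that $f_m^{-1}$ converge in $L^1(\Omega')$ and hence a.e.\ along a further subsequence, which is immediate from weak $W^{1,1}$ convergence and Rellich. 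The key point is then to transfer the a.e.\ convergence of $f_m^{-1}$ on $\Omega'$ to an a.e.\ statement ``along $f$'' on $\Omega$: since $f$ satisfies the Lusin $\N$ condition (the already-proven part of Theorem~\ref{main}) and $J_f>0$ a.e.\ (Lemma~\ref{jfnonzero}), the map $f$ pushes forward Lebesgue-null sets to Lebesgue-null sets and pulls them back to null sets (Lemma~\ref{Ninv}); hence if $N\subset\Omega'$ is the null set off which $f_m^{-1}\to h$, then $f^{-1}(N)$ is null in $\Omega$ and for a.e.\ $x\in\Omega$ we have $f(x)\notin N$.

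Next I would fix a good point $x_0\in\Omega$: a point of approximate differentiability of $f$ with $J_f(x_0)>0$, a Lebesgue point of the relevant densities, a point where $f_m(x_0)\to f(x_0)$, a density point of $\{x:f_m(x)\to f(x)\}$, and such that $f(x_0)\notin N$. Set $y_0=f(x_0)$. Then $f_m^{-1}(y_0)\to h(y_0)$. The aim is to show $h(y_0)=x_0$, i.e.\ $h(f(x_0))=x_0$. The natural way is to use that $f_m^{-1}(f_m(x_0))=x_0$ exactly for every $m$, so it suffices to show $f_m^{-1}(f_m(x_0))$ and $f_m^{-1}(y_0)$ have the same limit, i.e.\ that $|f_m^{-1}(f_m(x_0))-f_m^{-1}(y_0)|\to 0$. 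This is where equicontinuity-type control on $f_m^{-1}$ along the convergent sequence $f_m(x_0)\to y_0$ is needed; pointwise a.e.\ convergence of $f_m^{-1}$ is not by itself enough to evaluate at the moving points $f_m(x_0)$. To get around this I would instead argue via the $\INV$/degree and topological-image machinery already developed: for a.e.\ small radius $r$, Lemma~\ref{lem:fk-ft} gives $|f_m(B(x_0,r))\triangle \topi(f,B(x_0,r))|\to 0$, and by Lemma~\ref{lem:distributional_jacobian}(iii) and the $\N$ condition, $|\topi(f,B(x_0,r))|=\int_{B(x_0,r)}J_f=\Det Df(B(x_0,r))$, which is small and shrinks like $|B(x_0,r)|$ as $r\to 0$ because $J_f$ is locally integrable. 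Then $f_m^{-1}\bigl(f_m(B(x_0,r))\bigr)=B(x_0,r)$, and comparing with $f_m^{-1}\bigl(\topi(f,B(x_0,r))\bigr)$ using the uniform smallness of preimages of small sets under $f_m^{-1}$ (equiintegrability of $Df_m^{-1}$ plus Lemma~\ref{Ninv}) shows that for a.e.\ $y\in \topi(f,B(x_0,r))$ all cluster points of $f_m^{-1}(y)$ lie in $\overline{B(x_0,r)}$; letting $m\to\infty$ and then $r\to 0$ through a suitable sequence pins $h(y_0)=x_0$ for a.e.\ $x_0$.

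For the second identity $f(h(y))=y$ a.e.\ in $\Omega'$ the roles are reversed and we additionally invoke $|\partial\Omega'|=0$. I would fix a good point $y_0\in\Omega'$ with $f_m^{-1}(y_0)\to h(y_0)=:x_0\in\Omega$ (note $x_0\in\Omega$ rather than $\partial\Omega$ for a.e.\ $y_0$, since $|f_m^{-1}(\partial\Omega')|$-type boundary contributions vanish and $h$ maps a full-measure subset of $\Omega'$ into $\Omega$), at which $f$ is approximately continuous. Since $f_m(f_m^{-1}(y_0))=y_0$ and $f_m^{-1}(y_0)\to x_0$, and since $f_m\to f$ in $L^{n-1}$ hence (along a subsequence) a.e., with the convergence ``stable'' on sets of density one, I would again pass through small balls: $f_m^{-1}\bigl(B(y_0,\sigma)\bigr)$ is a neighbourhood of $f_m^{-1}(y_0)$ of measure $\int_{B(y_0,\sigma)}J_{f_m^{-1}}$, which by equiintegrability of $Df_m^{-1}$ is uniformly small and controlled, so it is eventually contained in a small ball $B(x_0,\tau)$ with $\tau\to 0$ as $\sigma\to0$; applying $f_m$ and using $f_m(B(x_0,\tau))\to \topi(f,B(x_0,\tau))$ and the $\N$ condition gives $|\topi(f,B(x_0,\tau))|$ small, and Lemma~\ref{lem:img} ($f(B)\subset \topi(f,B)$) forces $f(x_0)$ to lie within distance $o(1)$ of $B(y_0,\sigma)$, whence $f(x_0)=y_0$. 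The main obstacle, as indicated above, is exactly this passage from a.e.\ pointwise convergence of the sequences $f_m$ and $f_m^{-1}$ to the evaluation of one sequence at the moving argument produced by the other; the device that makes it work is the combination of (a) the topological-image identity $\Det Df(B)=|\topi(f,B)|$ with $\N$ condition, (b) the strong $L^1$ convergence $f_m(B)\to\topi(f,B)$ from Lemma~\ref{lem:fk-ft}, and (c) the uniform smallness of $f_m$- and $f_m^{-1}$-preimages of small sets from the equiintegrability estimates, which replaces the missing equicontinuity.
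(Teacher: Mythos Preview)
Your approach to $h(f(x))=x$ is in the right direction and can be made rigorous, but it is more delicate than the paper's. The paper never tries to evaluate $f_m^{-1}$ at the single point $y_0=f(x_0)$: instead it uses that $y_0$ is a Lebesgue point of $h$, picks a tiny ball $\tilde B$ around $y_0$ on which the average of $|h-h(y_0)|$ is $<r/2$ and most of $\tilde B$ lies in $\topi(f,B(a,r))$ (with $B(a,r)\ni x_0$ chosen from a fixed countable collection with rational centres), replaces $\topi(f,B)$ by $f_m(B)$ via \eqref{eq:fk-ft} and $h$ by $f_m^{-1}$ via $L^1$-convergence, and reads off $h(y_0)\in B(a,4r)$ from the integral inequality $\frac{1}{|\tilde B|}\int_{\tilde B}|f_m^{-1}-h(y_0)|<r$ together with $f_m^{-1}(z)\in B(a,r)$ for $z\in f_m(B)\cap\tilde B$. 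Your pointwise/Borel--Cantelli route requires passing to subsequences depending on each ball and then arguing carefully that the specific point $y_0$ avoids countably many null exceptional sets; this works but is heavier, and ``equiintegrability of $Df_m^{-1}$ plus Lemma~\ref{Ninv}'' is not the correct citation for uniform smallness of $f_m^{-1}$-preimages---that comes from Lemma~\ref{l:reverse} applied to $f_m$.

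Your argument for $f(h(y))=y$ has a genuine gap. You assert that $f_m^{-1}(B(y_0,\sigma))$ has uniformly small measure ``so it is eventually contained in a small ball $B(x_0,\tau)$''. Small measure does not imply small diameter: the homeomorphic image of a small ball can be a long thin set not contained in any small ball around $x_0$. The same fallacy recurs in the last step, where you try to force $f(x_0)$ close to $y_0$ merely from $|\topi(f,B(x_0,\tau))|$ being small. The paper's argument for this direction is one line once the first identity is available: by Lemma~\ref{lem:jacobian_convergence}, $\int_\Omega J_f=\lim_m\int_\Omega J_{f_m}=|\Omega'|$; since $f$ satisfies $\N$ and is a.e.\ injective (from $\INV$), the area formula gives $|f(\Omega)|=|\Omega'|$, so with $|\partial\Omega'|=0$ a.e.\ $y\in\Omega'$ equals $f(x)$ for some $x$ with $h(f(x))=x$, and then $f(h(y))=f(h(f(x)))=f(x)=y$. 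You should replace your second-half argument with this.
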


\begin{proof}
	From Theorem \ref{equiintegrable} we obtain that there is a subsequence of $f_m^{-1}$ which converges weakly in $W^{1,1}$ to some $h$ and we work with this subsequence here. 

    Recall that we know that $J_f(x)> 0$ for a.e.~$x\in\Omega$. 
    Since $f$ satisfies $\N$ we can use \eqref{area2} (for $A=f^{-1}(E)$) to obtain that 
		\eqn{Ninverse}
		$$
		|f^{-1}(E)|=0\text{ for every }E\subset \rn\text{ with }|E|=0.
		$$
    Therefore, for a.e.~$x\in\Omega$ we know that $f(x)$ is a Lebesgue point of $h$.

    Moreover, we claim that for a.e.~$x\in\Omega$ and every $\eta>0$ 
    there exists a ball $B=B(a,r)$ such that
    $f$ satisfies $\INV$ and \eqref{eq:fk-ft} for $B$ and 
    \begin{equation}\label{nice_points}
        x\in B, \: r<\eta, \text{ and }
        f(x)\in \topi(f,B)\text{ is a point of density $1$ of } \topi(f,B).
    \end{equation}
    Indeed, choose a countable set of balls
    $$
    \begin{aligned}
        \mathcal{B}:=\bigl\{B(c,r_i):\ &c\in\mathbb{Q}^n\cap \Omega,\ r_i \in [2^{-i-1},2^{-i})\text{ for all }i\in\en \\
        &\text{ and }f \text{ satisfies }\INV\text{ and }\eqref{eq:fk-ft} \text{ for }B(c,r_i)\bigr\}. \\
    \end{aligned}    
    $$
		For every $B_j\in \mathcal{B}$ we know that a.e.~point of $\topi(f,B_j)$ is a point of density $1$ and with the help of \eqref{Ninverse} we can find a null set $\Sigma_j$ such that 
    $$
        f(x)\in \topi(f,B_j)\text{ for each }x\in B_j\setminus \Sigma_j\text{ and }f(x) 
        \text{ is a point of density of }\topi(f,B_j). 
    $$
    Then $\Sigma:=\bigcup_j \Sigma_j$ is a null set and for every $x\in \Omega \setminus \Sigma$ we have \eqref{nice_points} for some ball $B_j$. 

    Let us first prove that $h(f(x))=x$ a.e.
    We pick $x$ such that $f(x)$ is a Lebesgue point of $h$ and  $f(x)$ is a  point of density of $\topi(f,B)$ for some ball $B=B(a,r)$ satisfying \eqref{nice_points}.
    Now we can find a ball $\tilde{B}$ around $f(x)$ so that 
    $$
        \frac{1}{|\tilde{B}|}\int_{\tilde{B}}|h(z)-h(f(x))|\,dz<r/2 
        \qquad\text{ and}\qquad
        \left|\bigl\{y\in\tilde{B}:\ y\in \topi(f,B)\bigr\}\right|>0.9|\tilde{B}|. 
    $$
    Using convergence $f_m^{-1}$ to $h$ in $L^{1}_{\loc}(\Omega',\rn)$ and \eqref{eq:fk-ft}, 
    we fix $m$ big enough so that 
    $$
        \int_{\tilde{B}} |f_m^{-1}(z)-h(z)|\,dz<r|\tilde{B}|/2
        \qquad\text{and}\qquad
        |f_m(B)\triangle \topi(f,B)|<0.1 |\tilde B|.
    $$ 
    Combining the estimates, we obtain
    $$
        \frac{1}{|\tilde{B}|}\int_{\tilde{B}} |f_m^{-1}(z)-h(f(x))|\,dz<r
        \qquad\text{and}\qquad
        \left|\bigl\{y\in\tilde{B}:\ y\in f_m(B)\bigr\}\right|>0.8|\tilde{B}|. 
    $$
    We claim that this implies
		\eqn{aha}
    $$
        h(f(x))\in B(a,4r), 
    $$
	 since otherwise we get a contradiction from 
	$$
    \begin{aligned}
    	\frac{1}{|\tilde{B}|}\int_{\tilde{B}} |f_m^{-1}(z)-h(f(x))|\,dz& \geq 
    	\frac{1}{|\tilde{B}|}\int_{\{y\in\tilde{B}:\ y\in f_m(B)\}} \bigl|h(f(x))-a-(f_m^{-1}(z)-a)\bigr|\,dz \\
    	& \geq 0.8 (4 r-r)>r. 
    \end{aligned}
	$$
    Since $r>0$ is chosen arbitrary, we conclude from \eqref{aha} that $h(f(x))=x$ for a.e.~$x\in\Omega$.

It is not difficult to see that $f(\Omega)\subset\overline{\Omega'}$. 
From Lemma \ref{lem:jacobian_convergence} and change of variables \eqref{area2} we know that 
$$
|\Omega'|=\lim_{k\to\infty}|f_k(\Omega)|=\lim_{k\to\infty}\int_{\Omega}J_{f_k}(x)\dx=\int_{\Omega}J_f(x)\dx= \int_{\rn}N(f,\Omega,y)\dy. 
$$ 
From the a.e.-injectivity \cite[Lemma 3.7]{CDL} of $f$ together with the $\N$ condition for $f$ we now obtain 
$$
|\Omega'|=\int_{\rn}N(f,\Omega,y)\dy=\int_{f(\Omega)}1\dy=|f(\Omega)|.
$$
Since $f(\Omega)\subset\overline{\Omega'}$ and $|\partial \Omega'|=0$ we obtain that a.e.~point $y\in \Omega'$ lies in $f(\Omega)$ and $N(f,\Omega,y)=1$ there. 
  
    The other equality $f(h(y)) = y$ for a.e.~$y\in \Omega'$ now follows easily. 
		We know $h(f(x))=x$ holds for a.e.~$x\in \Omega$ and that $f$ satisfies the $\N$ condition. Hence for a.e.~$y\in f(\Omega)$ we can pick $x\in \Omega$ such that $f(x)=y$ and  $h(f(x))=x$. Now
    $$
		f(h(y))=f(h(f(x)))=f(x)=y.
		$$
		Note that in this proof we do not need a Sobolev regularity but only $f_m^{-1} \to h$ in $L^{1}_{\loc}$.
\end{proof}

\begin{proof}[Proof of Theorem~\ref{main}]
Theorem \ref{main} now follows from result in Section \ref{three}, Lemma \ref{lem:N}, Lemma \ref{inverse}  and Lemma~\ref{previous}.
\end{proof}

\subsection{Counterexamples to lower semicontinuity}\label{counter}

The following example shows that one has to ask the condition $\N$ for $f_{m}$ to conclude lower semicontinuity of a quasiconvex functional, even if $\ff$ and $A$ satisfy \eqref{varphi} and \eqref{A}.

\begin{lemma}[Counterexample for lsc]\label{lem:example}
Let $p<n$, then there exist  $\ff$ and $A$ that satisfy \eqref{varphi} and \eqref{A} and homeomorphisms $f_m$, $f\colon[0,1]^n \to [0,1]^n$ 
such that $J_{f_m}$, $J_f >0$ a.e., \eqref{key} is fulfilled,
$f_m=id$ on $\partial([0,1]^n)$ and $f_m$ converge to $f$ weakly in $W^{1,p}([0,1]^n,\rn)$. 
However, $f_m$ does not satisfy the Lusin $\N$ condition for all $m\in\en$ and
$$
\int_{(0,1)^n} J_{f}(x) \dx>
\liminf_{m\to\infty} 
\int_{(0,1)^n} J_{f_m}(x) \dx.
$$
\end{lemma}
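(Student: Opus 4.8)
The plan is to localise a single Ponomarev-type homeomorphism. First I would fix the dimension $n\ge 2$ and recall/construct a homeomorphism $g\colon[0,1]^n\to[0,1]^n$ with: $g=\mathrm{id}$ near $\partial([0,1]^n)$; $J_g>0$ a.e.; $g\in W^{1,q}([0,1]^n,\rn)$ for every $q<n$; and $g$ carrying a Cantor-type null set $C_A$ onto a Cantor-type set $C_B$ with $0<|C_B|=:a<1$, so that $g$ fails $\N$. Such a $g$ comes from nested families $A_k\supset A_{k+1}$ (a union of $2^{nk}$ cubes of side $\alpha_k$, each splitting into the $2^n$ cubes of $A_{k+1}$) and $B_k\supset B_{k+1}$ (with side $\beta_k$), chosen so that $2^{nk}\alpha_k^n\to 0$ while $2^{nk}\beta_k^n\to a$: one maps the generation-$k$ cubes of $A_k$ affinely onto those of $B_k$, interpolates (essentially radially) on each annular region $A_k\setminus A_{k+1}$, and sets $g=\mathrm{id}$ near $\partial([0,1]^n)$. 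For sides decaying fast enough, e.g.\ $\alpha_k=2^{-k}\gamma_k$ with $\gamma_k\downarrow0$ so fast that $\sum_k\gamma_k^{\,\epsilon}<\infty$ for every $\epsilon>0$, one gets $\int_{[0,1]^n}|Dg|^q\le C\sum_k\gamma_k^{\,n-q}<\infty$ for all $q<n$, with the borderline $q=n$ genuinely excluded (a $W^{1,n}$ homeomorphism satisfies $\N$); this construction is classical, see e.g.~\cite{HK}. Off the null set $C_A$ the map $g$ is locally bi-Lipschitz, hence satisfies $\N$ there, and since $g$ is a bijection of $[0,1]^n$ with $g(C_A)=C_B$, the area formula \eqref{area2} gives $\int_{[0,1]^n}J_g=|g([0,1]^n\setminus C_A)|=|[0,1]^n\setminus C_B|=1-a$.

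Next I would localise. Choose $k_m\to\infty$, split $[0,1]^n$ into the $k_m^{\,n}$ closed cubes $Q$ of side $1/k_m$, let $\psi_Q\colon Q\to[0,1]^n$ be the orientation-preserving affine bijection, and define $f_m:=\psi_Q^{-1}\circ g\circ\psi_Q$ on each $Q$. Since $g=\mathrm{id}$ on $\partial([0,1]^n)$, these pieces agree (and equal $\mathrm{id}$) on common faces, so $f_m$ is a homeomorphism of $[0,1]^n$ onto itself with $f_m=\mathrm{id}$ on $\partial([0,1]^n)$, $J_{f_m}>0$ a.e.; moreover $f_m$ fails $\N$ because it maps the null set $\bigcup_Q\psi_Q^{-1}(C_A)$ onto a set of measure $\sum_Q k_m^{-n}|C_B|=a$. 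By the chain rule $Df_m(x)=Dg(\psi_Q(x))$ on each $Q$, so $\cof Df_m(x)=\cof Dg(\psi_Q(x))$ and $J_{f_m}(x)=J_g(\psi_Q(x))$; thus $|Df_m|$, $|\cof Df_m|$ and $J_{f_m}$ are equimeasurable with $|Dg|$, $|\cof Dg|$ and $J_g$, and hence $\int_{(0,1)^n}\Theta(|Df_m|)=\int_{(0,1)^n}\Theta(|Dg|)$ for every Borel $\Theta\ge0$, and likewise for $\cof$ and $J$. In particular $\int_{(0,1)^n}J_{f_m}=1-a$ and $\F(f_m)=\F(g)$ for all $m$, and since $\|f_m-\mathrm{id}\|_{L^\infty}\le\sqrt n/k_m\to0$ while $\{Df_m\}$ is bounded and equiintegrable (being equimeasurable with $Dg$, which lies in $L^q$ for every $q<n$), we obtain $f_m\rightharpoonup\mathrm{id}=:f$ weakly in $W^{1,p}([0,1]^n,\rn)$; clearly $f$ is a homeomorphism with $J_f\equiv1>0$ and $f=\mathrm{id}$ on $\partial([0,1]^n)$.

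It remains to choose $\ff$ and $A$ so that \eqref{key} holds, i.e.\ $\F(g)<\infty$. Since $g\in W^{1,n-1}$ we have $|\cof Dg|\le C|Dg|^{n-1}\in L^1([0,1]^n)$, so the de la Vall\'ee Poussin theorem (\cite[Theorem B.104]{L}) provides a positive convex $A$ with $\lim_{t\to\infty}A(t)/t=\infty$ and $\int_{[0,1]^n}A(|\cof Dg|)<\infty$, i.e.\ \eqref{A}. Since $J_g>0$ a.e.\ on the finite-measure cube, $|\{J_g<\delta\}|\to0$ as $\delta\to0^+$, and a dual de la Vall\'ee Poussin argument then produces a positive convex $\ff$ on $(0,\infty)$ with $\lim_{t\to0^+}\ff(t)=\infty$ and $\int_{[0,1]^n}\ff(J_g)<\infty$, i.e.\ \eqref{varphi}. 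Together with $\int|Dg|^{n-1}<\infty$ this gives $\F(g)<\infty$, hence $\sup_m\F(f_m)=\F(g)<\infty$. Finally $\int_{(0,1)^n}J_f=1>1-a=\liminf_m\int_{(0,1)^n}J_{f_m}$, which is the asserted failure of lower semicontinuity for the polyconvex functional $w\mapsto\int_{(0,1)^n}J_w$.

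The main difficulty is the very first step: producing $g$ that is a \emph{genuine} Sobolev homeomorphism in $W^{1,q}$ for every $q<n$ (its one-dimensional analogue is forced to carry a singular distributional derivative, so one really exploits $n\ge2$ and the radial spreading), that fails $\N$, and that is the identity near $\partial([0,1]^n)$ with $J_g>0$ a.e.; the delicate point is balancing the cube-size sequences $\alpha_k,\beta_k$ so that $|Dg|\in L^q$ for all $q<n$ while $|C_A|=0<a=|C_B|$, which is exactly the content of the classical Ponomarev construction. Everything afterwards — the localisation, the equimeasurability bookkeeping, and the de la Vall\'ee Poussin choices of $A$ and $\ff$ — is routine.
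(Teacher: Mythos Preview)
Your proposal is correct and follows the same overall route as the paper: take a Ponomarev-type homeomorphism $g$ of the cube that fails~$\N$, localise it by placing rescaled copies into the $m^n$ subcubes of side $1/m$, and read off $\int J_{f_m}=1-|C_B|<1=\int J_f$ from equimeasurability of $Df_m$ with $Dg$.

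The one genuine difference is how $A$ and $\ff$ are produced. The paper fixes them first --- any $A(t)\le Ct^\beta$ with $\beta>1$, and any $\ff$ satisfying \eqref{varphi} that is bounded near $t=1$ --- and then chooses the Ponomarev parameters $a_k=k^{-\alpha}$, $b_k=1+k^{-\alpha n}$ with $\alpha<\min\{n/p,\,n/((n-1)\beta)\}$ so that the pointwise bounds $|Dg|\approx k^\alpha$, $J_g\approx 1$ on the $k$-th level make $\F(g)<\infty$ by an explicit summation. You instead build a generic $g\in\bigcap_{q<n}W^{1,q}$ first and then extract $A$ from $|\cof Dg|\in L^1$ via de~la~Vall\'ee~Poussin, and $\ff$ from $|\{J_g<\delta\}|\to0$ via a ``dual'' version. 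Both are legitimate; the paper's choice is more concrete (and makes $J_g$ bounded away from $0$ and $\infty$, so the $\ff$ term is trivial), while yours shows that essentially any Ponomarev map works with suitable $A,\ff$. Your dual de~la~Vall\'ee~Poussin step is correct but nonstandard and deserves one line of justification: pick $\delta_k\downarrow0$ with $|\{J_g\le\delta_k\}|\le 2^{-k}$, define $\ff$ convex, decreasing on $(0,\delta_1]$ with $\ff(\delta_k)\le k$, and constant on $[\delta_1,\infty)$; then $\int\ff(J_g)\le C+\sum_k(k{+}1)2^{-k}<\infty$.
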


\begin{proof}
   Take any $A(t) \leq C t^{\beta}$, where $C>0$ and $\beta >1$ are some constants, and take $\varphi$ which behaves like an identity around $1$ and satisfies \eqref{varphi}. Consider a Ponomarev-type map $g\colon [0,1]^n \to [0,1]^n$, $g\in W^{1,p}([0,1]^n,\rn)$ which maps a Cantor-set $\mathcal{C}_A$ of measure zero to a Cantor-set $\mathcal{C}_{B}=g(\mathcal{C}_{A})$ of positive measure, and which is identical on $\partial([0,1]^n)$.
    Such a map can be found by the standard construction, see \cite[Chapter~4.3]{HK} with 
    $$
    a_k= \frac{1}{k^{\alpha}}\text{ and }
    b_k=1+\frac{1}{k^{\alpha n}}
    \text{ where }0<\alpha < \min\left\{\frac{n}{p}, \frac{n}{(n-1)\beta}\right\}.
    $$ 
    Referring the reader to \cite[Chapter~4.3]{HK} for details, we just notice that on the $k$-th level we have 
    \begin{equation*}\label{Dg_pointwise}
    \begin{aligned}
        & |Dg| \approx \max\left\{\frac{b_k}{a_k}, \frac{b_{k-1} - b_{k}}{a_{k-1} - a_k}\right\} \approx k^{\alpha},
        \qquad
        J_g \approx \left(\frac{b_k}{a_k}\right)^{n-1} \cdot \frac{b_{k-1} - b_{k}}{a_{k-1} - a_k} \approx 1, \\
        & |\cof Dg| \leq C |Dg|^{n-1} \approx k^{\alpha (n-1)}\quad\text{ and }\quad \left|\{k\text{-th level}\}\right| \approx 2^{-kn} \frac{1}{k^{n+1}}.
    \end{aligned} 
    \end{equation*}
    It follows that the map $g$ has finite energy since  
    \begin{equation}\label{Dg_integral}
    \begin{aligned}
        & \int_{(0,1)^n} |Dg(x)|^p \dx \leq C \sum_{k=1}^{\infty} 2^{kn} 2^{-kn}\frac{1}{k^{n+1}} k^{\alpha p} < \infty,\\
        & \int_{(0,1)^n} A(|\cof Dg(x)|) \dx \leq C \int_{(0,1)^n} |\cof Dg(x)|^{\beta} \dx \leq C \sum_{k=1}^{\infty} 2^{kn} 2^{-kn}\frac{1}{k^{n+1}} k^{\alpha \beta (n-1)} <\infty,\\
        & \int_{(0,1)^n} \varphi(J_{g}(x)) \dx <\infty,
        \qquad \text{and} \qquad
        \int_{(0,1)^n} K_g^{\frac{1}{n-1}}(x) \dx \leq \int_{(0,1)^n} |Dg(x)|^{\frac{n}{n-1}} \dx <\infty.
    \end{aligned}
    \end{equation}

    We set $f_1=g$ and we divide the cube $[0,1]^n$ into $m^n$ equal cubes both in the domain and in the target.
    Fix one of those $m^n$ small cubes
    $Q_z:=\{x\in [0,1]^n : \|x-z\|_{\infty} < \frac{1}{2m}\}$
    with a center point
    $z$
    and 
    define $f_m|_{Q_z}\colon Q_z \to Q_z$ as a scaled and translated copy of $g$ 
    $$
    f_m(x):=\frac{1}{m}g\bigl(m(x-z)\bigr)+z. 
    $$
    It is easy to see by change of variables that
    $$
        \int_{(0,1)^n}|Df_m|^p \dx = \int_{(0,1)^n} |Dg|^p \dx
    $$
    and analogously for other integrals in \eqref{Dg_integral}. 
    It follows that $\sup_m \F (f_m) = \F(g) <\infty$ and hence there is a subsequence which converges weakly in $W^{1,n-1}$. Since $f_m\to f:=\operatorname{id}$ pointwise, identity is a weak limit of $f_m$. 

    By construction, $g$ maps the Cantor-set $\mathcal{C}_{A}$ to the Cantor-set $\mathcal{C}_{B}$, where $\mathcal{C}_{A}$ is a set where $g$ fails the Lusin $\N$ condition. 
    Hence, for every $m\in\en$ and for each $Q_{z}$ it holds that
    $$
        \int_{Q_z} J_{f_m}(x) \dx=\frac{1-|\mathcal{C}_B|}{m^n}.
    $$ 
    Therefore,
    $$
        \int_{(0,1)^n} J_{f_m}(x) \dx =1-|\mathcal{C}_B|<1=\int_{(0,1)^n} J_f(x) \dx,
    $$ 
    so the lower semicontinuity fails at least for this quasiconvex functional. 
\end{proof}

\begin{figure}[h]
\begin{center}
\begin{tikzpicture}[scale=0.02]
    \draw[step=64] (0, 0) grid (192, 192);
    \draw (10,74) rectangle (22, 86);
    \draw (10,106) rectangle (22, 118);
    \draw (42,74) rectangle (54, 86);
    \draw (42,106) rectangle (54, 118);
    \draw (12,76) rectangle (14, 78);
    \draw (12,82) rectangle (14, 84);
    \draw (18,76) rectangle (20, 78);
    \draw (18,82) rectangle (20, 84);
    \draw (12,108) rectangle (14, 110);
    \draw (12,114) rectangle (14, 116);
    \draw (18,108) rectangle (20, 110);
    \draw (18,114) rectangle (20, 116);
    \draw (44,76) rectangle (46, 78);
    \draw (44,82) rectangle (46, 84);
    \draw (50,76) rectangle (52, 78);
    \draw (50,82) rectangle (52, 84);
    \draw (44,108) rectangle (46, 110);
    \draw (44,114) rectangle (46, 116);
    \draw (50,108) rectangle (52, 110);
    \draw (50,114) rectangle (52, 116);
    \draw [->] (200,96) -- (244,96);
    \draw[step=64] (256, 0) grid (448, 192);
    \draw (258,66) rectangle (286, 94);
    \draw (258,98) rectangle (286, 126);
    \draw (290,66) rectangle (318, 94);
    \draw (290,98) rectangle (318, 126);
    \draw (260,68) rectangle (270, 78);
    \draw (260,82) rectangle (270, 92);
    \draw (274,68) rectangle (284, 78);
    \draw (274,82) rectangle (284, 92);
    \draw (260,100) rectangle (270, 110);
    \draw (260,114) rectangle (270, 124);
    \draw (274,100) rectangle (284, 110);
    \draw (274,114) rectangle (284, 124);
    \draw (292,68) rectangle (302, 78);
    \draw (292,82) rectangle (302, 92);
    \draw (306,68) rectangle (316, 78);
    \draw (306,82) rectangle (316, 92);
    \draw (292,100) rectangle (302, 110);
    \draw (292,114) rectangle (302, 124);
    \draw (306,100) rectangle (316, 110);
    \draw (306,114) rectangle (316, 124);
\end{tikzpicture}
\end{center}
\caption{$f_3$ and its action on  $Q_{\left(\frac{1}{6}, \frac{1}{2}\right)}$.}
\label{pic:example}
\end{figure}
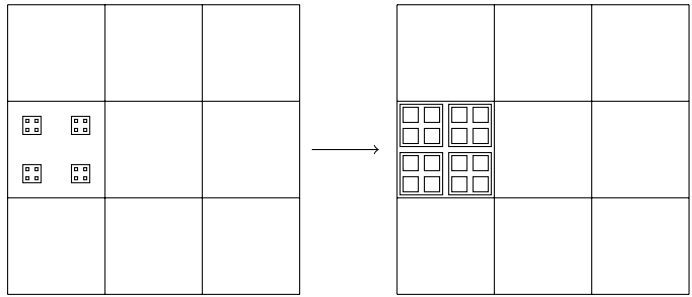

\subsection{$\N$ condition and lower semicontinuity in the context of Theorem \ref{main2}}\label{sec:DHM}


\begin{proof}[Proof of Theorem \ref{main2}]
The fact that the limit $f$ satisfies the $\INV$ condition follows from \cite[Theorem 3.1 a)]{DHM} and Lemma \ref{jfnonzero}.

As in Step 2 of Section~\ref{three}, we know that for every center and almost every radius the corresponding ball satisfies the conditions of Lemma~\ref{lem:fk-ft}. Thus,
    \begin{equation}\label{eq:fk-ft_DHM}
        |f_m(B)\triangle \topi(f,B)|\overset{k\to\infty}{\to} 0.
    \end{equation}

Further, the $\N$ condition of $f$ and a lower semicontinuity of $\mathcal{G}$ follow from Lemmata~\ref{lem:N}--\ref{lem:jacobian_convergence}, provided with \eqref{eq:fk-ft_DHM}. 
    To prove the lower semicontinuity of $\mathcal{G}$, we just note that the function 
    $$
    g(x,y) = x^{n-1} + x^{\frac{n}{n-1}} y^{-\frac{1}{n-1}} + \ff(y)
    $$
    is convex, and as in Lemma~\ref{previous} use the De Giorgi Theorem \cite[Theorem 3.23]{Dac2008} again. Let us note that our functional depends only on $|Df|$ and $\det Df$, but not on $\cof Df$, so we do not need to care about convergence of $(n-1)\times(n-1)$ subdeterminants here. 
    
    Further, from \cite[Corollary 4.2]{DS} we conclude that there exists a subsequence of $f_m^{-1}$ which converges weakly-$*$ in $BV$ to some $h$, in particular $f_m^{-1} \to h$ in $L^{1}_{\loc}$. Injectivity almost everywhere of both $f$ and $h$ is then  obtained by following the lines of proof of Lemma~\ref{inverse}.
\end{proof}

\section{Application to Calculus of Variations}\label{calcvar}

Let $n\geq 3$ and $\Omega$, $\Omega'\subset\rn$ be bounded domains, e.g.\ representing the reference and deformed configurations in nonlinear elasticity. 
Define the energy functional
\begin{equation}\label{elastic_enegry}
    \mathcal{E}(f):=\int_{\Omega} W(Df(x)) \dx,
\end{equation}
where $W \colon \er^{n\times n} \to \er $ is a \textit{polyconvex} function, i.e.,\
$W$ can be expressed as a convex function of the minors of its argument,
satisfying 
\begin{equation}\label{coercivity}
    W(F)\geq 
    \begin{cases}
        C \bigl(|F|^{n-1}+\varphi(\det F)+A(|\cof F|)-1\bigr), & \text{if } \det F >0,\\
        \infty, & \text{if } \det F \leq 0,
        \end{cases}
\end{equation}
for some $C>0$ and for some positive functions $A$ and $\varphi$.
Consider a homeomorphism $f_0$ from $\overline{\Omega}$ onto $\overline{\Omega'}$ such that $\mathcal{E}(f_0)<\infty$, and 
the following sets of admissible functions:
\begin{multline*}
    \mathcal{H}_{f_0}(\Omega,\rn):=\Bigl\{f:\overline{\Omega}\to\rn: f\text{ is a homeomorphism 
    of }\overline{\Omega}\text{ onto }\overline{\Omega'} \text{ satisfying }\\
    \text{ the Lusin } \N\text{ condition, } 
    f=f_0\text{ on }\partial \Omega, 
    \text{ and } \mathcal{E}(f) \leq \mathcal{E}(f_0)\Bigr\}
\end{multline*}
and 
\begin{multline*}
    \overline{\mathcal{H}}^{w}_{f_0}(\Omega,\rn):=\Bigl\{f:\Omega\to\rn:\text{ there are }f_m\in \mathcal{H}_{f_0}(\Omega,\rn)\text{ with } 
    \\
    f_m\rightharpoonup f\text{ weakly in }W^{1,n-1}(\Omega,\rn)\Bigr\}.
\end{multline*}

Note that $\overline{\mathcal{H}}^{w}_{f_0}(\Omega,\rn)$ is weakly (sequentially) closed and hence it is a suitable set of mappings for variational approach: 

\begin{prop}\label{prop:H_closure}
    Let $g_m\in \overline{\mathcal{H}}^{w}_{f_0}(\Omega,\rn)$ and assume that $g_m\rightharpoonup g$ weakly in $W^{1,n-1}(\Omega,\rn)$. 
    Then $g\in \overline{\mathcal{H}}^{w}_{f_0}(\Omega,\rn)$.
    In particular, there exists a sequence $f_{m}\in \mathcal{H}_{f_0}(\Omega,\rn)$ such that $f_m\rightharpoonup g$ weakly in $W^{1,n-1}(\Omega,\rn)$,
    $g=f_0$ on $\partial \Omega$, and 
    $\sup_{m} \mathcal{E}(f_m) \leq \mathcal{E}(f_0)<\infty$. 
\end{prop}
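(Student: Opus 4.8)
The plan is to deduce the closedness of $\overline{\mathcal{H}}^{w}_{f_0}(\Omega,\rn)$ from a soft diagonalisation, so Theorem~\ref{main} is actually not needed for this statement (it enters later, only through the lower semicontinuity of $\mathcal{E}$, when one minimises over $\overline{\mathcal{H}}^{w}_{f_0}$).

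The first step is a uniform a priori bound. Fixing $m$, by definition of $\overline{\mathcal{H}}^{w}_{f_0}(\Omega,\rn)$ there are homeomorphisms $f_{m,j}\in\mathcal{H}_{f_0}(\Omega,\rn)$ with $f_{m,j}\rightharpoonup g_m$ in $W^{1,n-1}(\Omega,\rn)$ as $j\to\infty$ and $\mathcal{E}(f_{m,j})\le\mathcal{E}(f_0)$. Since $\det Df_{m,j}=J_{f_{m,j}}>0$ a.e., the coercivity \eqref{coercivity} gives
\[
\int_{\Omega}|Df_{m,j}|^{n-1}\dx\le\tfrac1C\,\mathcal{E}(f_{m,j})+|\Omega|\le\tfrac1C\,\mathcal{E}(f_0)+|\Omega|,
\]
and, since each $f_{m,j}$ maps $\overline\Omega$ onto the bounded set $\overline{\Omega'}$, also $\|f_{m,j}\|_{L^\infty(\Omega,\rn)}\le C(\Omega')$. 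Hence $\|f_{m,j}\|_{W^{1,n-1}(\Omega,\rn)}\le K$ with $K$ independent of both $m$ and $j$; by weak lower semicontinuity of the norm the same bound holds for $g_m$ and for $g$, so the whole construction takes place inside one closed ball $B_K\subset W^{1,n-1}(\Omega,\rn)$.

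Next, since $n-1\ge2$ the space $W^{1,n-1}(\Omega,\rn)$ is separable and reflexive, so the weak topology is metrizable on $B_K$; I would fix a metric $d$ inducing it. For each $m$ one has $d(f_{m,j},g_m)\to0$ as $j\to\infty$, so I can pick $j(m)$ with $d(f_{m,j(m)},g_m)<\tfrac1m$. Since $g_m\rightharpoonup g$ gives $d(g_m,g)\to0$, we get $d(f_{m,j(m)},g)\le\tfrac1m+d(g_m,g)\to0$, i.e.\ $f_{m,j(m)}\rightharpoonup g$ weakly in $W^{1,n-1}(\Omega,\rn)$. (Equivalently one avoids the metric by the standard diagonal extraction: choose $j(m)$ so that $f_{m,j(m)}-g_m$ is tested correctly against the first $m$ members of a countable total subset of the dual, and use the uniform bound to pass to the limit.)

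Finally, setting $f_m:=f_{m,j(m)}$, each $f_m$ is a homeomorphism of $\overline\Omega$ onto $\overline{\Omega'}$ satisfying the Lusin $\N$ condition, with $f_m=f_0$ on $\partial\Omega$ and $\mathcal{E}(f_m)\le\mathcal{E}(f_0)$; thus $f_m\in\mathcal{H}_{f_0}(\Omega,\rn)$ and $f_m\rightharpoonup g$, so by definition $g\in\overline{\mathcal{H}}^{w}_{f_0}(\Omega,\rn)$. The boundary condition $g=f_0$ on $\partial\Omega$ survives the weak limit because the affine set of maps with the prescribed boundary values is convex and strongly, hence weakly, closed; and $\sup_m\mathcal{E}(f_m)\le\mathcal{E}(f_0)<\infty$ by construction. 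I expect the only genuinely delicate point to be securing the uniform $W^{1,n-1}$-bound that confines the whole diagonalisation to a single bounded (hence weakly metrizable) set; the remainder is routine.
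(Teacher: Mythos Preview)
Your proof is correct and follows essentially the same approach as the paper: both obtain a uniform $W^{1,n-1}$-bound on the approximating homeomorphisms from the energy constraint $\mathcal{E}(f_{m,j})\le\mathcal{E}(f_0)$ and the boundedness of $\Omega'$, then run a diagonal extraction, and finally use that the affine set $f_0+W_0^{1,n-1}$ is convex and strongly closed (hence weakly closed) to recover the boundary condition. The only cosmetic difference is that the paper carries out the diagonalisation by fixing a countable dense subset $\{L_i\}$ of $(W^{1,n-1})^*$ and arranging $|L_i(f_m-g)|\to0$ for each $i$, whereas you package the same argument via metrizability of the weak topology on bounded sets---and indeed you already note this alternative explicitly in your parenthetical remark.
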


\begin{proof}
    Since $W^{1,n-1}$ is reflexive and separable we can find $\{L_i\}_{i\in\en}\subset (W^{1,n-1})^*$ which is dense. 
    We can assume (passing to a subsequence) that
    $$
        |L_i(g_m-g)|<\frac{1}{k} \text{ for every }i\in\{1,\hdots,m\}. 
    $$
    For every $g_m$ we can find a sequence in $\mathcal{H}_{f_0}(\Omega,\rn)$ which converges weakly and thus we can fix $f_m\in \mathcal{H}_{f_0}(\Omega,\rn)$ such that
    $$
        |L_i(g_m-f_m)|<\frac{1}{m} \text{ for every }i\in\{1,\hdots,m\}. 
    $$
    It follows that for every $i\in\en$ we have
    $$
        \lim_{m\to\infty} L_i(f_m)=L_i(g). 
    $$
    Since $f_m(x)\in\Omega'\subset B(0,R)$ for all $x\in\Omega$,
    $\|Df_m\|_{L^{n-1}}\leq\mathcal{E}(f_m)\leq \mathcal{E}(f_0)$ result in $\|f_m\|_{W^{1,n-1}}\leq M$ for all $m$ and some constant $M>0$, so we easily obtain that 
    $$
        \lim_{m\to\infty} L(f_m)=L(g)\text{ for every } L\in (W^{1,n-1})^*.
    $$

    Note further that the set $f_0+W_0^{1,n-1}(\Omega,\rn)$ is closed and convex and thus weakly closed,
    therefore, $g=f_0$ on $\partial \Omega$.
\end{proof}

\begin{prop}\label{prop:jacobian_convergence_6}
    Let $g$, $g_m\in \overline{\mathcal{H}}^{w}_{f_0}(\Omega,\rn)$ and assume that $g_m\rightharpoonup g$ weakly in $W^{1,n-1}(\Omega,\rn)$. 
    Then (up to subsequence) $J_{g_m} \rightharpoonup J_{g}$ weakly in $L^1(\Omega)$.
\end{prop}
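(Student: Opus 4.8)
The plan is to identify the weak $L^1$-limit of $\{J_{g_m}\}$ by passing to the limit in the distributional Jacobians.

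First I would check that we are in the setting of Theorem~\ref{main}. By definition of $\overline{\mathcal H}^{w}_{f_0}(\Omega,\rn)$, for each fixed $m$ there are homeomorphisms $f_{m,k}\in\mathcal H_{f_0}(\Omega,\rn)$ with $f_{m,k}\rightharpoonup g_m$ weakly in $W^{1,n-1}$, and the coercivity estimate \eqref{coercivity} together with $\mathcal E(f_{m,k})\le\mathcal E(f_0)$ yields a bound on $\int_\Omega\bigl(|Df_{m,k}|^{n-1}+A(|\cof Df_{m,k}|)+\ff(J_{f_{m,k}})\bigr)$ which is uniform in both $m$ and $k$. Hence, as in the proof of Theorem~\ref{main} (namely Lemmata~\ref{lem:N} and \ref{lem:jacobian_convergence}), each $g_m$ satisfies $\INV$ and the Lusin $\N$ condition, $J_{g_m}>0$ a.e., and $\Det Dg_m=J_{g_m}\,\mathcal L^n$; the same holds for $g$, so $\Det Dg=J_g\,\mathcal L^n$. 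Applying the lower semicontinuity of $\F$ (Lemma~\ref{previous}) along $f_{m,k}\rightharpoonup g_m$ gives $\sup_m\int_\Omega\bigl(A(|\cof Dg_m|)+\ff(J_{g_m})\bigr)<\infty$, so by \eqref{varphi2}, \eqref{A} and the de la Vall\'{e}e Poussin theorem the families $\{J_{g_m}\}_m$ and $\{\cof Dg_m\}_m$ are equiintegrable. In particular, after passing to a subsequence, $J_{g_m}\rightharpoonup j$ weakly in $L^1(\Omega)$ for some $j\in L^1(\Omega)$, and it remains to prove $j=J_g$.

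Along the same subsequence I would also arrange $g_m\to g$ strongly in $L^{n-1}(\Omega,\rn)$ and a.e.\ (compact embedding), and $\cof Dg_m\rightharpoonup\cof Dg$ weakly in $L^1(\Omega)$ --- the latter being the weak continuity of the $(n-1)\times(n-1)$ minors under $W^{1,n-1}$-weak convergence, valid because $\{\cof Dg_m\}$ is equiintegrable (the ``standard argument'' already invoked in Lemma~\ref{previous}, see \cite[Theorem 6.2]{Ball} or \cite[Theorem 7.5-1]{Ciar1988}). Using that all $g_m$ take values in the bounded set $\overline{\Omega'}$, I would then show $\Det Dg_m\to\Det Dg$ in the sense of distributions: for $\psi\in C_c^\infty(\Omega)$ one has
$$\Det Dg_m(\psi)=-\int_\Omega (g_m)_1\,J\bigl(\psi,(g_m)_2,\dots,(g_m)_n\bigr)\dx,$$
and expansion of $J\bigl(\psi,(g_m)_2,\dots,(g_m)_n\bigr)$ along the first row writes it as $\sum_l \partial_l\psi\cdot(\cof Dg_m)_{1l}$, so each summand of $\Det Dg_m(\psi)$ is a product of the bounded, a.e.\ convergent factor $-(g_m)_1\partial_l\psi$ with the equiintegrable, weakly $L^1$-convergent factor $(\cof Dg_m)_{1l}$. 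A routine lemma (Egorov together with equiintegrability) gives $\int u_m v_m\to\int u v$ for such pairs, whence $\Det Dg_m(\psi)\to\Det Dg(\psi)$.

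Finally, the weak $L^1$-convergence $J_{g_m}\rightharpoonup j$ also gives $\Det Dg_m=J_{g_m}\mathcal L^n\to j\,\mathcal L^n$ as distributions, so $j\,\mathcal L^n=\Det Dg=J_g\,\mathcal L^n$ and $j=J_g$ a.e.; since the limit is independent of the subsequence, the whole sequence satisfies $J_{g_m}\rightharpoonup J_g$ in $L^1(\Omega)$. I expect the main obstacle to be the distributional convergence $\Det Dg_m\to\Det Dg$: it hinges on the weak $L^1$-convergence of the cofactors, which is exactly where the uniform bound $\sup_m\int_\Omega A(|\cof Dg_m|)<\infty$ coming from \eqref{A} (rather than mere $L^1$-boundedness) is essential, and on controlling the products of the a.e.-convergent bounded terms with the only weakly $L^1$-convergent minors.
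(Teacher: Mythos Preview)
Your proof is correct and takes a genuinely different route from the paper's.

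The paper's argument is geometric: it shows that $|\topi(g_m,B)|\to|\topi(g,B)|$ for suitable balls $B$ by a diagonal procedure through the approximating homeomorphisms. Namely, for each $m$ one picks $f_{m,k}\in\mathcal H_{f_0}$ with $|f_{m,k}(B)\triangle\topi(g_m,B)|$ small (Lemma~\ref{lem:fk-ft}), then selects a diagonal sequence $f_{m,k(m)}\rightharpoonup g$ so that $|f_{m,k(m)}(B)\triangle\topi(g,B)|$ is also small; combining the two gives the convergence of $|\topi(g_m,B)|$. One then finishes exactly as in Lemma~\ref{lem:jacobian_convergence}, using $\int_B J_{g_m}=|\topi(g_m,B)|\to|\topi(g,B)|=\int_B J_g$.

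You instead exploit that each $g_m$ already satisfies $\Det Dg_m=J_{g_m}\,\mathcal L^n$ (via Lemma~\ref{lem:N} applied to $g_m$ as a limit of homeomorphisms) and then pass to the limit in the distributional Jacobian directly, using weak $L^1$-convergence of $\cof Dg_m$ and the $L^\infty$ bound on $g_m$. This is a purely functional-analytic argument in the spirit of classical polyconvexity. It sidesteps the diagonal construction with $f_{m,k}$ at the cost of redoing the cofactor-continuity step, and it makes transparent that the inherited bound $\sup_m\int_\Omega A(|\cof Dg_m|)<\infty$ (obtained by lower semicontinuity along $f_{m,k}\rightharpoonup g_m$) is what drives the result. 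The paper's route, on the other hand, gives the extra geometric information \eqref{topi_convergence} about convergence of the topological images, which may be of independent interest and which your argument does not produce.
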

\begin{proof}
    Let us first prove that
    for every ball $B$, such that $f$ satisfies the $\INV$ condition and \eqref{eq:fk-ft} for $B$, 
    \begin{equation} \label{topi_convergence}
        \lim_{m\to\infty}|\topi (g_m,B)|= |\topi (g,B)|. 
    \end{equation}
    Fix such a ball $B$ for all $m\in\en$ and $\varepsilon >0$.
    In view of Lemma~\ref{lem:fk-ft}, for any $m\in\en$ there exists a sequence $f_{m,k}\in \mathcal{H}_{f_0}(\Omega,\rn)$ such that 
    $$
        |f_{m,k}(B) \triangle \topi (g_m,B)| \leq \frac{\varepsilon}{2}.
    $$
    Using the diagonal procedure as in Proposition~\ref{prop:H_closure}, we find a sequence $f_{m,k(m)}\in \mathcal{H}_{f_0}(\Omega,\rn)$ with $f_{m,k(m)}\rightharpoonup g$ weakly in $W^{1,n-1}$. 
    Again by Lemma~\ref{lem:fk-ft} it holds that
    $$
        |f_{m,k(m)}(B) \triangle \topi (g,B)| \leq \frac{\varepsilon}{2}
    $$
    for $m$ big enough.
    Combining these two inequalities, we obtain \eqref{topi_convergence}.

    Now the proof follows proof of Lemma~\ref{lem:jacobian_convergence},
    since 
    for every $a\in\Omega$ there is $r_a>0$ such that for $\mathcal{H}^1$-a.e.~$r\in (0,r_a)$ the mapping $g$ satisfies \eqref{eq:fk-ft} and $\INV$ for $B(a,r )$.
\end{proof}

\begin{thm}\label{CalcVar-1} 
    Let $n\geq 3$ and $\Omega$, $\Omega'\subset\rn$ be bounded domains,
    let also $W\colon \er^{n\times n} \to \er$ be a polyconvex function satisfying 
    \eqref{coercivity}
    for some functions $A$ and $\varphi$ satisfying \eqref{varphi}--\eqref{A} and a constant $C>0$.
    Assume further that $f_0$ is a homeomorphism from $\overline{\Omega}$ onto $\overline{\Omega'}$ such that $\mathcal{E}(f_0)<\infty$, where $\mathcal{E}$ is the energy defined by~\eqref{elastic_enegry}.  
    Then there exists $f\in \overline{\mathcal{H}}^{w}_{f_0}(\Omega,\rn)$ such that
    $$
        \mathcal{E}(f)=\inf\bigl\{\mathcal{E}(h):\ h\in \overline{\mathcal{H}}^{w}_{f_0}(\Omega,\rn)\bigr\}. 
    $$
    Moreover, $f$ satisfies the $\INV$ condition and the Lusin $\N$ condition. 
\end{thm}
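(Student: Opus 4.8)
The plan is to apply the direct method of the calculus of variations inside the class $\overline{\mathcal H}^{w}_{f_0}(\Omega,\rn)$, which is weakly sequentially closed by Proposition~\ref{prop:H_closure}, and then to read off the $\INV$ and Lusin $\N$ properties of the minimizer from Theorem~\ref{main}. First, since $f_0\in\mathcal H_{f_0}(\Omega,\rn)\subset\overline{\mathcal H}^{w}_{f_0}(\Omega,\rn)$ and $\mathcal E(f_0)<\infty$, the infimum $I:=\inf\{\mathcal E(h):h\in\overline{\mathcal H}^{w}_{f_0}(\Omega,\rn)\}$ is finite. Take a minimizing sequence $g_j\in\overline{\mathcal H}^{w}_{f_0}(\Omega,\rn)$ with $\mathcal E(g_j)\to I$. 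From the coercivity \eqref{coercivity} we get $\int_\Omega|Dg_j|^{n-1}\le\mathcal E(g_j)/C+|\Omega|$, which is bounded, and every $g_j$ takes values in $\overline{\Omega'}$ (being an a.e.\ limit of homeomorphisms onto $\overline{\Omega'}$), so $\|g_j\|_{W^{1,n-1}}$ is uniformly bounded. Passing to a subsequence, $g_j\rightharpoonup f$ weakly in $W^{1,n-1}(\Omega,\rn)$, and Proposition~\ref{prop:H_closure} gives $f\in\overline{\mathcal H}^{w}_{f_0}(\Omega,\rn)$ with $f=f_0$ on $\partial\Omega$.

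Next I would prove the lower semicontinuity $\mathcal E(f)\le\liminf_j\mathcal E(g_j)=I$, which then forces $\mathcal E(f)=I$, so $f$ is the sought minimizer (and, by \eqref{coercivity}, $J_f>0$ a.e.). Since $W$ is polyconvex, by the De Giorgi theorem \cite[Theorem 3.23]{Dac2008} it is enough to check that all minors of $Dg_j$ converge weakly in $L^1(\Omega)$ to the corresponding minors of $Df$: the minors of order at most $n-2$ converge as in Lemma~\ref{previous} (cf.\ \cite[Lemma 5.10]{Ri}); $\cof Dg_j\rightharpoonup\cof Df$ in $L^1$ follows from the equiintegrability supplied by \eqref{A}; and $J_{g_j}\rightharpoonup J_f$ in $L^1$ by Proposition~\ref{prop:jacobian_convergence_6}.

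It remains to show that this minimizer $f$ satisfies $\INV$ and the Lusin $\N$ condition. By definition of $\overline{\mathcal H}^{w}_{f_0}(\Omega,\rn)$, or more conveniently by Proposition~\ref{prop:H_closure}, there are homeomorphisms $f_m\in\mathcal H_{f_0}(\Omega,\rn)$ of $\overline\Omega$ onto $\overline{\Omega'}$ with $f_m\rightharpoonup f$ weakly in $W^{1,n-1}(\Omega,\rn)$, $f_m=f_0$ on $\partial\Omega$, and $\sup_m\mathcal E(f_m)\le\mathcal E(f_0)<\infty$. Each $f_m$ has $J_{f_m}>0$ a.e.\ (since $\mathcal E(f_m)<\infty$ excludes $\det Df_m\le 0$ on a set of positive measure via \eqref{coercivity}), satisfies the Lusin $\N$ condition by membership in $\mathcal H_{f_0}$, and \eqref{coercivity} also gives $\sup_m\F(f_m)\le\sup_m\mathcal E(f_m)/C+|\Omega|<\infty$, i.e.\ \eqref{key} for the functional $\F$ with the present $\varphi$ and $A$. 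Thus the hypotheses of Theorem~\ref{main} are met by the sequence $\{f_m\}$, and since \eqref{varphi2} is assumed, Theorem~\ref{main} yields that $f$ satisfies both the $\INV$ condition and the Lusin $\N$ condition.

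The only genuinely delicate step is the weak $L^1$-convergence of the Jacobians in the second paragraph: the minimizing sequence $g_j$ need not consist of homeomorphisms, so Lemma~\ref{lem:jacobian_convergence} cannot be applied to the $g_j$ directly; one must instead route through the diagonal construction of Proposition~\ref{prop:jacobian_convergence_6}, which compares $|\topi(g_j,B)|$ with the measures of images of genuine homeomorphisms lying $W^{1,n-1}$-close to $g_j$. Once that is in place, everything else is the routine direct method combined with the structural results already established.
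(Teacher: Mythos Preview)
Your proof is correct and follows essentially the same route as the paper: apply the direct method in $\overline{\mathcal H}^{w}_{f_0}$, use Proposition~\ref{prop:H_closure} for weak closedness, obtain lower semicontinuity via Proposition~\ref{prop:jacobian_convergence_6} and the De Giorgi theorem as in Lemma~\ref{previous}, and read off $\INV$ and Lusin~$\N$ from Theorem~\ref{main} applied to an approximating sequence of homeomorphisms. Your final paragraph correctly pinpoints the one nontrivial step---that the minimizing sequence lies in $\overline{\mathcal H}^{w}_{f_0}$ rather than $\mathcal H_{f_0}$, so Jacobian convergence must go through Proposition~\ref{prop:jacobian_convergence_6} rather than Lemma~\ref{lem:jacobian_convergence}---which is exactly how the paper handles it.
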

\begin{proof}
Let $f_{m}$ be a minimizing sequence for $\mathcal{E}$, 
then $f_m$ form a bounded sequence in $W^{1,n-1}$, 
and hence using Proposition \ref{prop:H_closure} there is $f\in \overline{\mathcal{H}}^{w}_{f_0}(\Omega,\rn)$ such that (up to a subsequence)
$f_m\rightharpoonup f$ weakly in $W^{1,n-1}$.
Provided with Proposition~\ref{prop:jacobian_convergence_6}, we obtain that $\mathcal{E}$ is lower semicontinuous in $\overline{\mathcal{H}}^{w}_{f_0}(\Omega,\rn)$ following the proof of Lemma~\ref{previous}.

Proposition~\ref{prop:H_closure} and
Theorem \ref{main} imply thus that $f$ satisfies the $\INV$ and the $\N$ conditions and also that 
$$
    \mathcal{E}(f)\leq\liminf_{m\to\infty}\mathcal{E}(f_m)=
    \lim_{m\to\infty}\mathcal{E}(f_m)=\inf\bigl\{\mathcal{E}(h):\ h\in \overline{\mathcal{H}}^{w}_{f_0}(\Omega,\rn)\bigr\}\leq \mathcal{E}(f). 
$$ 
\end{proof}

\begin{remark}
    Let us note that it is not clear if the two following infima 
    $$
        \inf\bigl\{\mathcal{E}(h):\ h\in \mathcal{H}_{f_0}(\Omega,\rn)\bigr\}
        \quad\text{ and }\quad  \inf\bigl\{\mathcal{E}(h):\ h\in \overline{\mathcal{H}}^{w}_{f_0}(\Omega,\rn)\bigr\},
    $$
    are equal or not since the space ${\mathcal{H}}_{f_0}(\Omega,\rn)$ is not compact.  
\end{remark}

Analogously  we can use the results of \cite{DHM} and Section~\ref{sec:DHM} to obtain the following theorem.
\begin{thm}\label{CalcVar-2} 
    Let $n\geq 3$ and $\Omega$, $\Omega'\subset\rn$ be Lipschitz domains,
    let also $W\colon \er^{n\times n} \to \er$ be a polyconvex function satisfying 
    \begin{equation*}
        W(F)\geq 
        \begin{cases}
            C \Bigl(|F|^{n-1}+\ff(\det F)+\Bigl(\frac{|F|^n}{\det F}\Bigr)^{\frac{1}{n-1}}-1\Bigr), & \text{if } \det F >0,\\
            \infty, & \text{if } \det F \leq 0,
        \end{cases} 
    \end{equation*}
    for some function $\varphi$ satisfying \eqref{varphi}--\eqref{varphi2} and \eqref{varphi3}, and a constant $C>0$. We assume that $W$ may be represented as a convex function of subdeterminants of order strictly less than $n-1$ and of $\det F$, i.e., it is not a function of subdeterminants of order $n-1$. 
    Assume further that $f_0$ is a homeomorphism from $\overline{\Omega}$ onto $\overline{\Omega'}$ such that $\mathcal{E}(f_0)<\infty$, where $\mathcal{E}$ is the energy defined by~\eqref{elastic_enegry}.  
    Then there exists $f\in \overline{\mathcal{H}}^{w}_{f_0}(\Omega,\rn)$ such that
    $$
        \mathcal{E}(f)=\inf\bigl\{\mathcal{E}(h):\ h\in \overline{\mathcal{H}}^{w}_{f_0}(\Omega,\rn)\bigr\}. 
    $$
    Moreover, $f$ satisfies the $\INV$ condition and the Lusin $\N$ condition. 
\end{thm}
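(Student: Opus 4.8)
The plan is to mimic the proof of Theorem~\ref{CalcVar-1} verbatim, with Theorem~\ref{main2} playing the role of Theorem~\ref{main} and with the admissible classes re-instantiated for the present energy. Concretely, set
$$
\mathcal{H}_{f_0}(\Omega,\rn):=\bigl\{f:\overline\Omega\to\rn\text{ homeomorphism of }\overline\Omega\text{ onto }\overline{\Omega'},\ f=f_0\text{ on }\partial\Omega,\ f\text{ satisfies }\N,\ \mathcal{E}(f)\le\mathcal{E}(f_0)\bigr\}
$$
and let $\overline{\mathcal{H}}^w_{f_0}(\Omega,\rn)$ be the set of weak $W^{1,n-1}$-limits of sequences from $\mathcal{H}_{f_0}(\Omega,\rn)$, where now $\mathcal{E}(f)=\int_\Omega W(Df)$ with the distortion-type coercivity of the statement. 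The new coercivity guarantees that any minimizing sequence $f_m$ is bounded in $W^{1,n-1}$ and that $\sup_m\int_\Omega\bigl(|Df_m|^{n-1}+\ff(J_{f_m})+(|Df_m|^n/J_{f_m})^{1/(n-1)}\bigr)<\infty$, so the hypotheses of Theorem~\ref{main2} apply along $f_m$ and its weak limit $f$.

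First I would re-prove the analogue of Proposition~\ref{prop:H_closure}: reflexivity and separability of $W^{1,n-1}$, together with the uniform bound $\|f_m\|_{W^{1,n-1}}\le M$ coming from $f_m(\Omega)\subset\Omega'$ bounded and $\|Df_m\|_{L^{n-1}}^{n-1}\le\mathcal{E}(f_m)\le\mathcal{E}(f_0)$, give by a diagonal argument that $\overline{\mathcal{H}}^w_{f_0}(\Omega,\rn)$ is weakly sequentially closed, and the Dirichlet datum is preserved because $f_0+W_0^{1,n-1}(\Omega,\rn)$ is convex and closed, hence weakly closed. Next I would establish the analogue of Proposition~\ref{prop:jacobian_convergence_6}: if $g_m\rightharpoonup g$ in $\overline{\mathcal{H}}^w_{f_0}(\Omega,\rn)$, then (up to a subsequence) $J_{g_m}\rightharpoonup J_g$ weakly in $L^1(\Omega)$. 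For this, \eqref{varphi2} and the de la Vall\'ee-Poussin theorem give equiintegrability of $\{J_{g_m}\}$; Theorem~\ref{main2} (via \cite[Theorem 3.1 a)]{DHM} and Lemma~\ref{jfnonzero}) gives that $g$ satisfies $\INV$ and that \eqref{eq:fk-ft_DHM} holds for $g$ on $\mathcal{H}^1$-a.e.\ sphere about every point, as well as $\Det Dg=J_g\,\mathcal{L}^n$ via Lemmata~\ref{lem:distributional_jacobian} and \ref{lem:N}; and, exactly as in Proposition~\ref{prop:jacobian_convergence_6}, applying Lemma~\ref{lem:fk-ft} along approximating homeomorphisms of $g_m$ and a diagonal procedure yields $|\topi(g_m,B)|\to|\topi(g,B)|$ on such balls. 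Comparing $\int_B J_{g_m}=|\topi(g_m,B)|$ with $\int_B J_g=|\topi(g,B)|$ as in Lemma~\ref{lem:jacobian_convergence} identifies the weak $L^1$ limit of $J_{g_m}$ as $J_g$.

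With this in hand, the lower semicontinuity of $\mathcal{E}$ on $\overline{\mathcal{H}}^w_{f_0}(\Omega,\rn)$ follows as in Lemma~\ref{previous}: the minors of order $\le n-2$ of $Df_m$ converge weakly in $L^1$ automatically from $W^{1,n-1}$ weak convergence (cf.\ \cite[Lemma 5.10]{Ri}), $J_{f_m}\rightharpoonup J_f$ weakly in $L^1$ by the previous paragraph, and since by hypothesis $W$ is a convex function only of minors of order strictly less than $n-1$ and of $\det F$, the De Giorgi lower semicontinuity theorem \cite[Theorem 3.23]{Dac2008} applies directly, \emph{without} any need to control the weak $L^1$ limit of $\cof Df_m$. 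Taking $f_m$ a minimizing sequence, Proposition~\ref{prop:H_closure}'s analogue produces $f\in\overline{\mathcal{H}}^w_{f_0}(\Omega,\rn)$ with $f_m\rightharpoonup f$, and lower semicontinuity gives $\mathcal{E}(f)\le\liminf_m\mathcal{E}(f_m)=\inf\{\mathcal{E}(h):h\in\overline{\mathcal{H}}^w_{f_0}(\Omega,\rn)\}\le\mathcal{E}(f)$, so $f$ is a minimizer; Theorem~\ref{main2} then furnishes the $\INV$ and Lusin $\N$ conditions for $f$. The main obstacle is the weak $L^1$ convergence of the Jacobians: equiintegrability from \eqref{varphi2} is only the easy half, and the delicate point is the identification of the limit through the topological images, which rests on the full content of Theorem~\ref{main2} — the $\INV$ condition and the $\N$ condition for the limit, together with $\Det Df=J_f\,\mathcal{L}^n$ — precisely in the regime where the cofactors are not controlled.
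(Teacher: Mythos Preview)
Your proposal is correct and follows essentially the same route as the paper: the proof of Theorem~\ref{CalcVar-2} is obtained by re-running the proof of Theorem~\ref{CalcVar-1} with Theorem~\ref{main2} in place of Theorem~\ref{main}, the only modification being that weak convergence of $\cof Df_m$ is neither available nor needed since $W$ does not depend on the $(n-1)\times(n-1)$ minors. Your write-up simply spells out in more detail the analogues of Propositions~\ref{prop:H_closure} and~\ref{prop:jacobian_convergence_6} and the lower-semicontinuity step, which is exactly what the paper's one-paragraph proof is pointing to.
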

\begin{proof}
The proof is analogous to the proof of Theorem \ref{CalcVar-1}. 
The only difference is that in the proof of lower semicontinuity we do not have \eqref{A} and therefore we cannot prove weak convergence of $\cof Df_m$ as in the proof of Lemma \ref{previous}. However, we do not need this as our $W$ ``does not depend'' on $(n-1)\times(n-1)$ subdeterminants. 
\end{proof}


\begin{thebibliography}{10}

\bibitem{AFP}
L.~Ambrosio, N.~Fusco, and D.~Pallara, {\em Functions of bounded variation and
  free discontinuity problems}.
\newblock Oxford Mathematical Monographs, The Clarendon Press, Oxford
  University Press, New York, 2000.

\bibitem{Ball}
J.~M. Ball, ``Convexity conditions and existence theorems in nonlinear
  elasticity,'' {\em Arch. Ration. Mech. Anal.}, vol.~63, pp.~337--403, 1977.

\bibitem{BM}
J.~M. Ball and F.~Murat, ``{$W^{1,p}$}-quasiconvexity and variational problems
  for multiple integrals,'' {\em J. Funct. Anal.}, vol.~58, no.~3,
  pp.~225--253, 1984.

\bibitem{BHMC}
M.~Barchiesi, D.~Henao, and C.~Mora-Corral, ``Local invertibility in {Sobolev}
  spaces with applications to nematic elastomers and magnetoelasticity,'' {\em
  Arch. Ration. Mech. Anal.}, vol.~224, pp.~743--816, 2017.


\bibitem{BHMCR}
M.~Barchiesi, D.~Henao, C.~Mora-Corral, and R.~Rodiac, ``Harmonic dipoles and
  the relaxation of the neo-hookean energy in 3d elasticity,'' {\em Arch. Ration. Mech. Anal.}, vol.~247, no.~70, 2023.

\bibitem{BHMCR2}
M.~Barchiesi, D.~Henao, C.~Mora-Corral, and R.~Rodiac, ``On the lack of
  compactness problem in the axisymmetric neo-hookean model,'' 2021.

\bibitem{BHM}
O.~Bouchala, S.~Hencl, and A.~Molchanova, ``Injectivity almost everywhere for
  weak limits of {S}obolev homeomorphisms,'' {\em J. Funct. Anal.}, vol.~279,
  no.~7, pp.~108658, 32, 2020.
	
\bibitem{BN}
H.~Brezis and L.~Nirenberg, ``Degree theory and {BMO}. {I}. {C}ompact manifolds
  without boundaries,'' {\em Selecta Math. (N.S.)}, vol.~1, no.~2,
  pp.~197--263, 1995.
	
\bibitem{CM}
P.~Celada and G.~Dal~Maso, ``Further remarks on the lower semicontinuity of
  polyconvex integrals,'' {\em Ann. Inst. H. Poincar\'{e} C Anal. Non
  Lin\'{e}aire}, vol.~11, no.~6, pp.~661--691, 1994.
	
\bibitem{Ciar1988}
P.~G. Ciarlet, {\em Mathematical Elasticity, Vol. I : Three-Dimensional
  Elasticity, Series ``Studies in Mathematics and its Applications''}.
\newblock 1988.

\bibitem{CN}
P.~G. Ciarlet and J.~Ne\v{c}as, ``Injectivity and self-contact in nonlinear
  elasticity,'' {\em Arch. Ration. Mech. Anal.}, vol.~97, no.~3, pp.~171--188,
  1987.

\bibitem{CDL}
S.~Conti and C.~De~Lellis, ``Some remarks on the theory of elasticity for
  compressible {Neohookean} materials,'' {\em Ann. Sc. Norm. Super. Pisa Cl.
  Sci. (5)}, vol.~2, pp.~521--549, 2003.
	
\bibitem{CHM}
M.~Cs\"{o}rnyei, S.~Hencl, and J.~Mal\'{y}, ``Homeomorphisms in the {S}obolev
  space {$W^{1,n-1}$},'' {\em J. Reine Angew. Math.}, vol.~644, pp.~221--235,
  2010.
	
\bibitem{Dac2008}
B.~Dacorogna, {\em Direct methods in the calculus of variations}, vol.~78 of
  {\em Applied Mathematical Sciences}.
\newblock Springer, New York, second~ed., 2008.
	
\bibitem{DMS}
G.~Dal~Maso and C.~Sbordone, ``Weak lower semicontinuity of polyconvex
  integrals: a borderline case,'' {\em Math. Z.}, vol.~218, no.~4,
  pp.~603--609, 1995.	

\bibitem{DPP}
G.~De~Philippis and A.~Pratelli, ``The closure of planar diffeomorphisms in
  {S}obolev spaces,'' {\em Ann. Inst. H. Poincar\'{e} C Anal. Non
  Lin\'{e}aire}, vol.~37, no.~1, pp.~181--224, 2020.

\bibitem{DHM}
A.~Dole\v{z}alov\'a, S.~Hencl, and J.~Mal\'y, ``Weak limit of homeomorphisms in
  {$W^{1,n-1}$} and {$\INV$} condition,'' {\em Arch. Ration. Mech. Anal.}, vol.~247, no.~80, 2023.
	
\bibitem{DS}
L.~D'Onofrio and R.~Schiattarella, ``On the total variations for the inverse of
  a {BV}-homeomorphism,'' {\em Adv. Calc. Var.}, vol.~6, no.~3, pp.~321--338,
  2013.
	
\bibitem{EG}
L.C.~Evans and R.F.~Gariepy, {\em Measure theory and fine properties of functions}, 
  {\em Studies in Advanced Mathematics}.
\newblock CRC Press, Boca Raton, FL, 1992. viii+268 pp.

\bibitem{Fe}
H.~Federer, {\em Geometric measure theory}, 
  {\em Die Grundlehren der mathematischen Wissenschaften,
Band 153}.
\newblock Springer-Verlag, New York, 1969
(Second edition 1996).
 	
\bibitem{FusMosSbo2008}
N.~Fusco, G.~Moscariello, and C.~Sbordone, ``The limit of {$W^{1,1}$}
  homeomorphisms with finite distortion,'' {\em Calc. Var. Partial Differential
  Equations}, vol.~33, no.~3, pp.~377--390, 2008.


\bibitem{GiaModSou}
M.~Giaquinta, G.~Modica, and J.~Sou\v{c}ek,
``Cartesian currents in the calculus of variations. {I},''
{\em Ergebnisse der Mathematik und ihrer Grenzgebiete. 3. Folge. A
Series of Modern Surveys in Mathematics}, 37.
Springer-Verlag, Berlin, 1998.

	
\bibitem{HH}
P.~Harjulehto and P.~H\"{a}st\"{o}, {\em Orlicz spaces and generalized {O}rlicz
  spaces}, vol.~2236 of {\em Lecture Notes in Mathematics}.
\newblock Springer, Cham, 2019.


\bibitem{HMC}
D.~Henao and C.~Mora-Corral, ``Lusin's condition and the distributional
  determinant for deformations with finite energy,'' {\em Adv. Calc. Var.},
  vol.~5, no.~4, pp.~355--409, 2012.

\bibitem{HeMo11}
D.~Henao and C.~Mora-Corral, ``Fracture surfaces and the regularity of inverses
  for {BV} deformations,'' {\em Arch. Ration. Mech. Anal.}, vol.~201, no.~2,
  pp.~575--629, 2011.
  
\bibitem{HMO}
D.~Henao, C.~Mora-Corral and M.~Oliva, ``Global invertibility of Sobolev maps,'' {\em Adv. Calc. Var.}, vol.~14, no.~2,
  pp.~207--230, 2019.
	
\bibitem{HK}
S.~Hencl and P.~Koskela, {\em Lectures on mappings of finite distortion}.
\newblock Lecture Notes in Mathematics, Vol. 2096, Springer International
  Publishing, 2014.
	
\bibitem{HM}
S.~Hencl and J.~Mal\'{y}, ``Jacobians of {S}obolev homeomorphisms,'' {\em Calc.
  Var. Partial Differential Equations}, vol.~38, no.~1-2, pp.~233--242, 2010.

\bibitem{IO}
T.~Iwaniec and J.~Onninen, ``Monotone {S}obolev mappings of planar domains and
  surfaces,'' {\em Arch. Ration. Mech. Anal.}, vol.~219, no.~1, pp.~159--181,
  2016.

\bibitem{IO2}
T.~Iwaniec and J.~Onninen, ``Limits of {S}obolev homeomorphisms,'' {\em J. Eur.
  Math. Soc. (JEMS)}, vol.~19, no.~2, pp.~473--505, 2017.
	
\bibitem{L}
G.~Leoni, {\em A first course in {S}obolev spaces}, vol.~181 of {\em Graduate
  Studies in Mathematics}.
\newblock American Mathematical Society, Providence, RI, second~ed., 2017.
	
\bibitem{M}
J.~Mal\'{y}, ``Weak lower semicontinuity of polyconvex integrals,'' {\em Proc.
  Roy. Soc. Edinburgh Sect. A}, vol.~123, no.~4, pp.~681--691, 1993.

\bibitem{MM}
J.~Mal\'y and O.~Martio, ``Lusin's condition $\N$ and mappings of the class $W^{1,n}$,'' {\em J. Reine Angew. Math.}, vol.~458,
  pp.~19--36, 1995.


\bibitem{MS}
S.~M\"uller and S.~Spector, ``An existence theory for nonlinear elasticity that
  allows for cavitation,'' {\em Arch. Ration. Mech. Anal.}, vol.~131, no.~1,
  pp.~1--66, 1995.

\bibitem{MST}
S.~M\"uller, S.~Spector, and Q.~Tang, ``Invertibility and a topological
  property of {S}obolev maps,'' {\em SIAM J. Math. Anal.}, vol.~27,
  pp.~959--976, 1996.
  
\bibitem{Ri}
F.~Rindler, {\em Calculus of variations}, {\em Universitext}. 
\newblock  Springer, Cham, 2018, 444pp.


\bibitem{SciStr2022}
G.~Scilla and B.~Stroffolini,
``Invertibility of {O}rlicz-{S}obolev maps,''
in {\em Research in mathematics of materials science,
Assoc. Women Math. Ser.},
vol.~31,
pp.~297--317, 2022.


\bibitem{STY}
J.~Spector, Q.~Tang and B.~S.~Yan, ``On a new class of elastic deformations
not allowing for cavitation,''
  {\em Ann. Inst. H. Poincaré Anal. Non Linéaire}, vol.~11, no.~2, pp.~217--243, 1994.

\bibitem{SwaZie2002}
D.~Swanson and W.~P. Ziemer, ``A topological aspect of {S}obolev mappings,''
  {\em Calc. Var. Partial Differential Equations}, vol.~14, no.~1, pp.~69--84,
  2002.

\bibitem{SwaZie2004}
D.~Swanson and W.~P. Ziemer, ``The image of a weakly differentiable mapping,''
  {\em SIAM J. Math. Anal.}, vol.~35, no.~5, pp.~1099--1109, 2004.

\bibitem{T}
Q.~Tang, ``Almost-everywhere injectivity in nonlinear elasticity,'' {\em Proc.
  Roy. Soc. Edinburgh Sect. A}, vol.~109, no.~1--2, pp.~79--95, 1988.


\end{thebibliography}
\end{document}